\documentclass[11pt]{article}

\usepackage{amsmath,amssymb,amsthm}
\usepackage{mathrsfs}
\usepackage{cite}
\usepackage{geometry}
\newtheorem{theorem}{Theorem}[section]
\newtheorem{proposition}[theorem]{Proposition}
\newtheorem{lemma}[theorem]{Lemma}
\newtheorem{corollary}[theorem]{Corollary}
\newtheorem{remark}[theorem]{Remark}
\newtheorem{definition}[theorem]{Definition}
\numberwithin{equation}{section}

\begin{document}

\title{Complete Spectrum and Sharp Local Stability for the Critical Exponential Biharmonic Choquard Equation in \(\mathbb R^{4}\)}

\author{Wenjing Chen and Shengbing Deng\\
School of Mathematics and Statistics, Southwest University, Chongqing, China\\
Email: wjchen@swu.edu.cn, \ \ \ \ shbdeng@swu.edu.cn\\
}

\date{}

\maketitle

\begin{abstract}
We study the conformally invariant exponential biharmonic Choquard equation in
\(\mathbb R^{4}\). Our principal result is the complete spectral resolution of
the linearized operator at its conformal bubbles. After stereographic
projection, the operator becomes a bounded zeroth-order perturbation of the
Paneitz operator, with a compact Riesz component on
\(L^{2}(\mathbb S^{4})\). We justify the weak conformal transfer, remove every
pole-supported distributional defect, and compute all eigenvalues. The Morse
index is one. The kernel is the five-dimensional conformal space. All higher
modes satisfy a uniform coercivity estimate. The transverse Hessian of the
Adams--Choquard deficit is the same linearized operator. Hence the complete
spectrum gives sharp local stability with respect to the Paneitz distance from
the conformal extremal manifold. The optimal asymptotic constant is
\[
\gamma_{\alpha}
=\frac{160-12\alpha-\alpha^{2}}{40(10-\alpha)},
\]
and the limiting quotient is minimized precisely by the second spherical
harmonics. As a nonlinear preparation, we also prove that every normal
distributional finite-mass solution satisfies the hypotheses of Niu's
classification theorem and is therefore an explicit translation--dilation
bubble.
\end{abstract}

\noindent\textbf{2020 Mathematics Subject Classification.}
35J30, 35B33, 35P15, 35R11, 46E35, 58J50.

\noindent\textbf{Key words and phrases.}
Exponential biharmonic Choquard equation; complete linearized spectrum; nondegeneracy; Adams--Choquard deficit; sharp local stability.

\section{Introduction}

Second-order elliptic equations with exponential nonlinearity are the local
models for conformal curvature problems in dimension two. The equation
\(-\Delta u=V(x)e^{u}\) exhibits concentration, quantization and loss of
compactness at the Trudinger--Moser threshold. Brezis and Merle established the
basic local estimates \cite{BrezisMerle}. Chen and Li classified the entire
finite-mass profiles \cite{ChenLi}; quantization and sharp blow-up analysis were
developed further in \cite{LiShafrir,AdimurthiDruet}. Recent variational and
geometric work addresses critical points, compactness and high-energy bubbling
for Moser--Trudinger functionals; see
\cite{DeMarchisMalchiodiMartinazziThizy,MalchiodiMartinazziThizy}.

The fourth-order local endpoint occurs in dimension four. Adams' inequality
\cite{Adams} replaces the Trudinger--Moser inequality, and the Paneitz operator
and \(Q\)-curvature equation replace their second-order counterparts. Entire
finite-volume solutions of conformally invariant polyharmonic equations were
classified in \cite{Lin,WeiXu,Martinazzi}. Concentration--compactness,
compactness and bubbling for fourth-order exponential equations were studied
in \cite{MartinazziCC,DruetRobert,LinWeiFourthOrder,MalchiodiCompactness}.
The geometric variational theory includes
\cite{ChangYang,DjadliMalchiodi,MalchiodiStruwe}.

The Choquard interaction introduces a nonlocal exponential endpoint. Its
variational structure originates in Lieb's sharp
Hardy--Littlewood--Sobolev theory \cite{LiebChoquard,LiebSharp}; the power-type
theory now includes critical ground states, uniqueness, nondegeneracy and
perturbative constructions, as surveyed in \cite{MorozVanSchaftingen} and
developed in
\cite{MorozVanSchaftingenCrit,CassaniZhang,CassaniVanSchaftingenZhang,VanSchaftingenXia,MaZhao,DuYang,GaoMorozYangZhao,SquassinaYangZhao,LiLiuTangXu}.
Detailed eigenvalue asymptotics for slightly subcritical Hartree bubbles and
their consequences for the Morse index were obtained in
\cite{CannoneCingolaniYangZhao}. For the Laplacian in \(\mathbb R^{2}\), nonlinear Choquard equations with
Trudinger--Moser critical growth have been treated by variational methods under
positive, indefinite and decaying potentials; representative results include
\cite{QinTang,CarvalhoMedeirosRibeiro}. The exact conformally invariant model is
\[
-\Delta u
=\left(\int_{\mathbb R^{2}}\frac{e^{u(y)}}{|x-y|^{\alpha}}\,dy\right)e^{u(x)}.
\]
The planar classification was established in \cite{GluckPlanar} and is also
contained in Niu's higher-order classification theorem
\cite{NiuClassification}. Its bubble nondegeneracy was proved recently in
\cite{GaoLiMa}. Concentration--compactness and quantization for related
nonlocal exponential equations were obtained in \cite{Gluck}, and a broader
classification of conformally invariant nonlocal exponential equations appears
in \cite{GluckMarasinghe}. That work classifies the asymptotic behaviour under
a suitable integrability condition and obtains explicit profiles under
additional growth and energy assumptions. Its scope overlaps with the
classification side of the present equation. Our starting point is different:
we assume the normal distributional representation \eqref{eq:normality} and the
single conformal mass condition \eqref{eq:finite-mass}. We then prove
regularity, the exact mass, and the far-field estimates needed to verify the
hypotheses of Niu's theorem. Thus Section~3 is a self-contained reduction and
normalization step. The complete spectrum and the sharp stability result are
the new parts of the paper.

For biharmonic Choquard equations in \(\mathbb R^{4}\), the variational theory
with Adams-critical growth has been developed in
\cite{ChenWangZAMP,ChenLiWang,ChenWangPEMS}. Those works concern existence and
normalized states for nonlinearities with critical exponential growth. The
exact conformal limit equation serves a different purpose. It determines the
bubble profile arising in blow-up and reduction arguments, while its
linearized spectrum governs modulation, quantitative stability, local
uniqueness and finite-dimensional reduction.

In the present paper we consider
\begin{equation}\label{eq:main}
\Delta^{2}u=
\left(\int_{\mathbb R^{4}}\frac{e^{u(y)}}{|x-y|^{\alpha}}\,dy\right)e^{u(x)}
\qquad\hbox{in }\mathbb R^{4},
\end{equation}
where \(0<\alpha<4\). Throughout the paper, all functions are real-valued. This equation combines the logarithmic conformal structure of a four-dimensional Liouville equation with a critical Riesz interaction. It is invariant under translations and dilations: if \(u\) solves \eqref{eq:main}, then
\begin{equation}\label{eq:invariance}
u_{\delta,\zeta}(x)
:=u(\delta(x-\zeta))+\frac{8-\alpha}{2}\log\delta,
\qquad \delta>0,\quad \zeta\in\mathbb R^{4},
\end{equation}
solves the same equation. The corresponding conformally invariant mass is
\begin{equation}\label{eq:finite-mass}
\int_{\mathbb R^{4}}e^{\frac{8}{8-\alpha}u(x)}\,dx<+\infty.
\end{equation}

For the classification step, set
\[
b_{\alpha}:=\frac{8-\alpha}{2},
\qquad
v:=\frac{1}{b_{\alpha}}
\left(u-\frac12\log b_{\alpha}\right).
\]
Then \eqref{eq:main} becomes
\[
\Delta^{2}v=
\left(\int_{\mathbb R^{4}}
\frac{e^{b_{\alpha}v(y)}}{|x-y|^{\alpha}}\,dy\right)
e^{b_{\alpha}v(x)}.
\]
Thus it is the four-dimensional member of the higher-order critical Choquard family. We use the published classification theorem of Niu \cite{NiuClassification}, whose development began in the earlier Huang--Niu preprint \cite{HuangNiu}. The published theorem is formulated for classical solutions satisfying a
subquadratic growth assumption and two global integrability conditions. We
verify these assumptions from the normal distributional hypothesis naturally
arising from the biharmonic logarithmic representation.

For a solution of \eqref{eq:main}, put
\[
F(x):=
\left(\int_{\mathbb R^{4}}\frac{e^{u(y)}}{|x-y|^{\alpha}}\,dy\right)e^{u(x)}.
\]
We call a finite-mass solution normal if
\begin{equation}\label{eq:normality}
u(x)=-\frac{1}{8\pi^{2}}
\int_{\mathbb R^{4}}\log|x-y|F(y)\,dy+C
\end{equation}
for some \(C\in\mathbb R\), where the integral is finite for every
\(x\in\mathbb R^{4}\). The assumption excludes the nonconstant entire
biharmonic remainder, a phenomenon specific to higher-order Liouville
equations. Define
\begin{equation}\label{eq:C-alpha}
C_{\alpha}:=
\left(
\frac{12(8-\alpha)(6-\alpha)(4-\alpha)}{\pi^{2}}
\right)^{\frac{1}{8-\alpha}},
\end{equation}
and, for \(\mu>0\) and \(\zeta\in\mathbb R^{4}\),
\begin{equation*}
U_{\mu,\zeta}(x)
:=\frac{8-\alpha}{2}\log
\left(
\frac{C_{\alpha}\mu}{1+\mu^{2}|x-\zeta|^{2}}
\right).
\end{equation*}
A direct computation gives
\begin{equation*}
\Delta^{2}U_{\mu,\zeta}(x)
=
\frac{48(8-\alpha)\mu^{4}}
{\left(1+\mu^{2}|x-\zeta|^{2}\right)^{4}},
\end{equation*}
and the conformal Riesz identity gives
\begin{equation}\label{eq:Riesz-bubble}
\int_{\mathbb R^{4}}
\frac{e^{U_{\mu,\zeta}(y)}}{|x-y|^{\alpha}}\,dy
=
\frac{48(8-\alpha)}{C_{\alpha}^{4}}
e^{\frac{\alpha}{8-\alpha}U_{\mu,\zeta}(x)}.
\end{equation}

The classification result used below is stated first.

\begin{theorem}\label{thm:classification}
Let \(0<\alpha<4\). Assume that \(u\in L^{1}_{\operatorname{loc}}(\mathbb R^{4})\) is a normal distributional solution of \eqref{eq:main} satisfying \eqref{eq:finite-mass}. Then \(u\in C^{\infty}(\mathbb R^{4})\), and there exist \(\mu>0\) and \(\zeta\in\mathbb R^{4}\) such that
\[
u(x)=U_{\mu,\zeta}(x).
\]
Moreover,
\begin{equation}\label{eq:mass-identity}
\int_{\mathbb R^{4}}e^{\frac{8}{8-\alpha}U_{\mu,\zeta}(x)}\,dx
=\frac{\pi^{2}}{6}C_{\alpha}^{4},
\end{equation}
and
\begin{equation}\label{eq:double-identity}
\int_{\mathbb R^{4}}\int_{\mathbb R^{4}}
\frac{e^{U_{\mu,\zeta}(x)}e^{U_{\mu,\zeta}(y)}}{|x-y|^{\alpha}}\,dxdy
=8\pi^{2}(8-\alpha).
\end{equation}
\end{theorem}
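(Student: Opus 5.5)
The proof reduces everything to Niu's classification theorem \cite{NiuClassification}, applied to \(v=b_\alpha^{-1}(u-\tfrac12\log b_\alpha)\). The work consists in deriving, from the normal representation \eqref{eq:normality} and the mass condition \eqref{eq:finite-mass}, the three hypotheses of that theorem: classical regularity, subquadratic growth, and the two global integrability conditions. The steps are as follows.

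First, we show that the Riesz potential \(K(x)=\int e^{u(y)}|x-y|^{-\alpha}dy\) is finite and locally bounded. Hölder's inequality with \(e^{u}=(e^{\frac{8}{8-\alpha}u})^{\frac{8-\alpha}{8}}\), together with the Hardy--Littlewood--Sobolev inequality, gives \(e^{u}\in L^{8/(8-\alpha)}\). Since \(\frac{8}{8-\alpha}\) is exactly the HLS exponent pairing with \(|x|^{-\alpha}\) in \(\mathbb R^4\), \(K\in L^{8/\alpha}\) and \(F=Ke^{u}\in L^1\). Normality requires \(\int\log(1+|y|)F(y)\,dy<\infty\). Next comes a Brezis--Merle type estimate for the logarithmic potential in dimension four: if the local mass of \(F\) is small, \(e^{p|w|}\) is integrable for large \(p\). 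We use it to get \(e^{u}\in L^p_{loc}\) for every \(p\). Then \(K\in L^\infty_{loc}\), \(F\in L^p_{loc}\), and \(u\in W^{4,p}_{loc}\). Bootstrapping gives \(u\in C^\infty\), so \(u\) is a classical solution.

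Second, we determine the far-field behaviour. Set \(\beta=\frac{1}{8\pi^2}\int F\). The standard argument for logarithmic potentials shows \(u(x)=-\beta\log|x|+o(\log|x|)\) outside a set of small density. Combined with \eqref{eq:finite-mass}, this forces \(\beta\frac{8}{8-\alpha}\ge 4\), hence \(\beta\ge\frac{8-\alpha}{2}\). In particular \(u\le C\) away from the origin and \(|u(x)|\le C\log(2+|x|)\), which is much stronger than the subquadratic growth assumption. From \(u\le -(\beta-\epsilon)\log|x|\) we get \(K(x)\sim |x|^{-\alpha}\int e^{u}\) at infinity. This gives the global integrability of \(e^{b_\alpha v}\) and of the Choquard term required by Niu. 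The Pohozaev identity then pins the mass down to \(\beta=8-\alpha\), which matches the bubble.

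Niu's theorem now applies and yields \(u=U_{\mu,\zeta}\). The identities then follow by direct calculation. By translation–dilation invariance of \(\int e^{\frac{8}{8-\alpha}U}\), we may take \(\mu=1\), \(\zeta=0\). Then \(\int C_\alpha^4(1+|x|^2)^{-4}dx=C_\alpha^4\,|\mathbb S^3|\int_0^\infty r^3(1+r^2)^{-4}dr=C_\alpha^4\cdot2\pi^2\cdot\frac1{12}\), which is \eqref{eq:mass-identity}. For \eqref{eq:double-identity}, the double integral equals \(\int F=\int\Delta^2U\), and \(48(8-\alpha)\cdot2\pi^2\int_0^\infty r^3(1+r^2)^{-4}dr=8\pi^2(8-\alpha)\). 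Consistency with \eqref{eq:Riesz-bubble} fixes \(C_\alpha\) as in \eqref{eq:C-alpha}.

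The main obstacle is the Pohozaev step and the sharp asymptotics needed for it. Normality only yields \(u=-\beta\log|x|+o(\log|x|)\) in an averaged sense. The boundary terms in the Pohozaev identity, \(\int_{\partial B_R}\) of third derivatives times \(x\cdot\nabla u\), need pointwise control of \(\nabla^k u\) for \(k\le3\) of order \(|x|^{-k}\). In addition, the nonlocal term contributes an \(\alpha\)-dependent piece through the symmetrised kernel identity \(x\cdot\nabla_x+y\cdot\nabla_y\) acting on \(|x-y|^{-\alpha}\). We will handle the derivatives by differentiating the log representation and splitting the integral into the regions \(|y|<|x|/2\), \(|y-x|<|x|/2\), and the remainder. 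The local region is the delicate one, and there we use the decay of \(F\) obtained in the second step.
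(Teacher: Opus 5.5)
Your architecture matches the paper's: a Jensen/Brezis--Merle local estimate and bootstrap for regularity, far-field asymptotics, a Pohozaev identity fixing the mass, then Niu's theorem and direct computation of the two identities. Evaluating the double integral as $\int\Delta^{2}U$ is a fine alternative to quoting the Pohozaev mass.

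The gap is in your second step, which as ordered is circular. Before the Pohozaev identity, you assert that $u\le-(\beta-\epsilon)\log|x|$ gives $K(x)\sim|x|^{-\alpha}\int e^{u}$, and hence $\int e^{b_\alpha v}<\infty$. You then use the resulting decay of $F$ to control the Pohozaev boundary terms. But $e^{u}\in L^{1}(\mathbb R^{4})$ requires $\beta>4$. At that stage you only know $\beta\ge\frac{8-\alpha}{2}$, and $\frac{8-\alpha}{2}<4$ for every $\alpha\in(0,4)$. Nothing else available forces $\beta>4$: a profile $e^{u}\approx|x|^{-\beta}$ with $\frac{8-\alpha}{2}<\beta<4$ is compatible with finite mass and with $F\in L^{1}$, since then $F\approx|x|^{4-\alpha-2\beta}$. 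Integrability of $e^{u}$ is a consequence of $\beta=8-\alpha$, i.e.\ of Pohozaev, so it cannot be an input to it. What you need instead is the decay estimate of Lemma~\ref{lem:Riesz-estimate}, which requires no integrability of $e^{u}$. If $e^{u}\le C(1+|x|)^{-\theta}$ with $\theta+\alpha>4$ and $\theta<4$, the Riesz potential is $O((1+|x|)^{4-\alpha-\theta})$. Hence $F=O((1+|x|)^{4-\alpha-2\theta})$, which is $O(|x|^{-4-\varepsilon_1})$ exactly when $\theta>\frac{8-\alpha}{2}$.

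This makes the strict inequality $\beta>\frac{8-\alpha}{2}$ indispensable, and your density argument only gives $\ge$. Strictness comes from the everywhere lower bound $u(x)\ge-\beta\log(1+|x|)-C$. This follows from the finite logarithmic moment you correctly extracted from normality, together with $|x-y|\le(1+|x|)(1+|y|)$. Equality $\beta=\frac{8-\alpha}{2}$ would then give $e^{\frac{8}{8-\alpha}u}\gtrsim(1+|x|)^{-4}$, contradicting finite mass.

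The pointwise upper bound $u\le-(\beta-\epsilon)\log|x|+C$, and your claim $u\le C$, are also not routine consequences of the representation. The kernel $-\log|x-y|$ is positive near $x$, and $F$ could a priori concentrate on balls drifting to infinity. You need a uniform version of your Step~1 estimate on moving balls $B_{4}(x)$: use the small tail mass $\int_{|y|>R}F$, Jensen's inequality, and interior $W^{4,s}$ bounds for the local potential, as in Lemma~\ref{lem:normal-asymptotics}.

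Finally, the Pohozaev boundary integrals converge to the nonzero limit $-M^{2}/(16\pi^{2})$. So bounds of order $|x|^{-k}$ on derivatives are not enough; you need the leading coefficients of $x\cdot\nabla u$, $\Delta u$ and $\partial_{r}\Delta u$, with $o(\cdot)$ remainders uniform in the angle. Your splitting of the log potential, fed with the correct decay of $F$, does provide this.

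The correct order is: strict lower bound, uniform upper bound, Riesz decay of $F$, asymptotics, Pohozaev, and only then $\int e^{u}<\infty$ and the hypotheses of Niu's theorem. With that order, the rest of your plan goes through.
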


The new part of the paper concerns the full linearized spectrum. Let
\begin{equation}\label{eq:normalized-bubble}
U(x):=U_{1,0}(x)
=\frac{8-\alpha}{2}\log\frac{C_{\alpha}}{1+|x|^{2}}.
\end{equation}
The Euclidean linearized equation is
\begin{equation}\label{eq:linearized}
\begin{aligned}
\Delta^{2}\phi(x)
={}&
\left(\int_{\mathbb R^{4}}
\frac{e^{U(y)}\phi(y)}{|x-y|^{\alpha}}\,dy\right)e^{U(x)}+
\left(\int_{\mathbb R^{4}}
\frac{e^{U(y)}}{|x-y|^{\alpha}}\,dy\right)e^{U(x)}\phi(x).
\end{aligned}
\end{equation}
We use the conformal weighted space
\begin{equation*}
L^{2}_{w}(\mathbb R^{4})
:=
\left\{\phi:
\int_{\mathbb R^{4}}
\frac{|\phi(x)|^{2}}{(1+|x|^{2})^{4}}\,dx<+\infty
\right\}.
\end{equation*}
The five symmetry modes are
\begin{equation}\label{eq:kernel-functions}
\phi_{j}(x)=\frac{(8-\alpha)x_{j}}{1+|x|^{2}},
\qquad 1\leq j\leq4,
\end{equation}
and
\begin{equation}\label{eq:kernel-scale}
\phi_{5}(x)=\frac{8-\alpha}{2}
\frac{1-|x|^{2}}{1+|x|^{2}}.
\end{equation}
They are obtained by differentiating \(U_{\mu,\zeta}\) with respect to translations and dilation.

Let \(S:\mathbb R^{4}\to\mathbb S^{4}\setminus\{p\}\), \(p=(0,0,0,0,-1)\), be stereographic projection and let
\[
P_{4}^{\mathbb S^{4}}
=(-\Delta_{\mathbb S^{4}})(-\Delta_{\mathbb S^{4}}+2).
\]
Define
\[
\mathcal R_{\alpha}\Phi(\xi)
:=\int_{\mathbb S^{4}}\frac{\Phi(\eta)}{|\xi-\eta|^{\alpha}}\,d\eta,
\qquad
\kappa_{\alpha}:=C_{\alpha}^{8-\alpha}2^{-(8-\alpha)},
\]
and
\begin{equation*}
\mathcal L_{\alpha}
:=P_{4}^{\mathbb S^{4}}
-\kappa_{\alpha}
\bigl(\mathcal R_{\alpha}+\mu_{0}(\alpha)I\bigr).
\end{equation*}
Here \(\mu_{k}(\alpha)\) is the Funk--Hecke eigenvalue of \(\mathcal R_{\alpha}\) on the spherical harmonics \(\mathcal H_{k}\).

The spherical operator admits the following complete spectral resolution.

\begin{theorem}\label{thm:full-spectrum}
The operator \(\mathcal L_{\alpha}\), with domain \(H^{4}(\mathbb S^{4})\) in \(L^{2}(\mathbb S^{4})\), is self-adjoint with compact resolvent. Every \(\mathcal H_{k}\) is an eigenspace and the corresponding eigenvalue is
\begin{equation}\label{eq:Lambda-k-intro}
\Lambda_{k}(\alpha)
=k(k+1)(k+2)(k+3)
-\kappa_{\alpha}
\bigl(\mu_{k}(\alpha)+\mu_{0}(\alpha)\bigr).
\end{equation}
More precisely,
\begin{equation*}
\Lambda_{0}(\alpha)=-6(8-\alpha),
\qquad
\Lambda_{1}(\alpha)=0,
\end{equation*}
and, for every \(k\geq2\),
\begin{equation*}
\Lambda_{k}(\alpha)
\geq\frac45 k(k+1)(k+2)(k+3)>0.
\end{equation*}
Consequently, the Morse index of \(\mathcal L_{\alpha}\) is one and
\[
\ker\mathcal L_{\alpha}=\mathcal H_{1}
=\operatorname{span}\{\xi_{1},\ldots,\xi_{5}\}.
\]
Moreover, for every \(\Phi\in H^{2}(\mathbb S^{4})\) satisfying
\(\Phi\perp_{L^{2}}\mathcal H_{0}\oplus\mathcal H_{1}\),
\begin{equation}\label{eq:coercivity-intro}
\langle\mathcal L_{\alpha}\Phi,\Phi\rangle_{L^{2}(\mathbb S^{4})}
\geq
\frac45
\langle P_{4}^{\mathbb S^{4}}\Phi,\Phi\rangle_{L^{2}(\mathbb S^{4})}.
\end{equation}
On the \(L^{2}\)-orthogonal complement of \(\mathcal H_{1}\), the operator is invertible and
\begin{equation*}
\|\Phi\|_{H^{4}(\mathbb S^{4})}
\leq C\|\mathcal L_{\alpha}\Phi\|_{L^{2}(\mathbb S^{4})},
\qquad \Phi\perp_{L^{2}}\mathcal H_{1},
\end{equation*}
where \(C\) is independent of \(0<\alpha<4\).
\end{theorem}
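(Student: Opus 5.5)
The plan is to diagonalize everything in spherical harmonics. I first record the operator-theoretic structure. The Riesz kernel \(|\xi-\eta|^{-\alpha}\) with \(0<\alpha<4\) is integrable on \(\mathbb S^{4}\) and depends only on \(\xi\cdot\eta\). By the Funk--Hecke formula, \(\mathcal R_{\alpha}\) therefore acts on each \(\mathcal H_{k}\) as multiplication by
\[
\mu_{k}(\alpha)=\frac{2^{4-\alpha}\pi^{2}\,\Gamma(2-\frac{\alpha}{2})\,\Gamma(k+\frac{\alpha}{2})}{\Gamma(\frac{\alpha}{2})\,\Gamma(k+4-\frac{\alpha}{2})}.
\]
I would rederive this constant, or at least the ratios below, rather than quote it. Since \(\mu_{k}\to0\), \(\mathcal R_{\alpha}\) is a bounded, self-adjoint, compact operator on \(L^{2}(\mathbb S^{4})\). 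The Paneitz operator \(P_{4}^{\mathbb S^{4}}\) is self-adjoint on \(H^{4}\) with compact resolvent, and acts on \(\mathcal H_{k}\) by \(k(k+1)(k+2)(k+3)\) because \(-\Delta_{\mathbb S^{4}}=k(k+3)\) there. Kato--Rellich for bounded perturbations then gives self-adjointness of \(\mathcal L_{\alpha}\) on \(H^{4}\) with compact resolvent. Every \(\mathcal H_{k}\) is an eigenspace with eigenvalue \eqref{eq:Lambda-k-intro}, and since \(\bigoplus_k\mathcal H_k\) is dense, this is the whole spectrum.

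The decisive quantitative input is the two ratios and the single normalization \(\kappa_{\alpha}\mu_{0}(\alpha)\). From the Gamma formula,
\[
\frac{\mu_{k+1}}{\mu_{k}}=\frac{k+\alpha/2}{k+4-\alpha/2}<1,
\]
so \(\mu_{k}\) is strictly decreasing in \(k\). In particular \(\mu_{1}/\mu_{0}=\alpha/(8-\alpha)\) and \(\mu_{2}/\mu_{0}=\alpha(\alpha+2)/((8-\alpha)(10-\alpha))\).

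Next I would show \(\kappa_{\alpha}\mu_{0}(\alpha)=3(8-\alpha)\). To do this, pull the conformal Riesz identity \eqref{eq:Riesz-bubble} back to the sphere via \(S\), using \(|S(x)-S(y)|=2|x-y|(1+|x|^{2})^{-1/2}(1+|y|^{2})^{-1/2}\) and \(d\eta=16(1+|y|^{2})^{-4}dy\). Then \(\mathcal R_{\alpha}1=\mu_{0}\) becomes exactly \eqref{eq:Riesz-bubble}, and the definition \(\kappa_{\alpha}=C_{\alpha}^{8-\alpha}2^{-(8-\alpha)}\) together with \eqref{eq:C-alpha} produces \(3(8-\alpha)\).

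This bookkeeping of powers of \(2\) and \(C_{\alpha}\) is where I expect the main risk of error. As an independent check, the symmetry modes \(\phi_{j}\) transfer to \(\xi_{j}\in\mathcal H_{1}\) and lie in the kernel. That forces \(24=\kappa_{\alpha}(\mu_{1}+\mu_{0})=\kappa_{\alpha}\mu_{0}\cdot 8/(8-\alpha)\), which gives the same value. Either route then yields
\[
\Lambda_{0}=-2\kappa_{\alpha}\mu_{0}=-6(8-\alpha),\qquad \Lambda_{1}=24-24=0.
\]

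For \(k\ge2\), write \(P_{k}=k(k+1)(k+2)(k+3)\ge120\). By monotonicity,
\[
\kappa_{\alpha}(\mu_{k}+\mu_{0})\le\kappa_{\alpha}(\mu_{2}+\mu_{0})=3(8-\alpha)+\frac{3\alpha(\alpha+2)}{10-\alpha}.
\]
This is at most \(24\) iff \(\alpha+2\le10-\alpha\), i.e. \(\alpha\le4\). Hence \(\Lambda_{k}\ge P_{k}-24\ge P_{k}-\tfrac15P_{k}=\tfrac45P_{k}>0\).

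The consequences follow by expansion in spherical harmonics:
\begin{itemize}
\item The Morse index is \(\dim\mathcal H_{0}=1\).
\item The kernel is \(\mathcal H_{1}=\operatorname{span}\{\xi_{1},\dots,\xi_{5}\}\).
\item The coercivity \eqref{eq:coercivity-intro} follows by expanding \(\Phi\in H^{2}\), \(\Phi\perp\mathcal H_{0}\oplus\mathcal H_{1}\), as \(\sum_{k\ge2}\Phi_{k}\) and summing \(\Lambda_{k}\|\Phi_{k}\|^{2}\ge\frac45P_{k}\|\Phi_{k}\|^{2}\).
\end{itemize}

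For the uniform inverse bound on \(\mathcal H_{1}^{\perp}\), I use \(\|\Phi\|_{H^{4}}^{2}\simeq\sum_{k}(1+P_{k})^{2}\|\Phi_{k}\|^{2}\) with constants independent of \(\alpha\). It then suffices to have \(|\Lambda_{k}|\ge c(1+P_{k})\) with \(c\) independent of \(\alpha\), for \(k=0\) and \(k\ge2\). For \(k=0\) we have \(|\Lambda_{0}|=6(8-\alpha)\ge24\). For \(k\ge2\) we have \(\Lambda_{k}\ge\frac45P_{k}\ge\frac{2}{3}(1+P_{k})\). This gives invertibility and the estimate with \(C\) uniform in \(0<\alpha<4\).
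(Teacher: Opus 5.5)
Your proposal is correct and follows essentially the paper's argument. Both proofs use the simultaneous diagonalization of $P_4^{\mathbb S^4}$ and $\mathcal R_\alpha$ on the $\mathcal H_k$, the normalization $\kappa_\alpha\mu_0(\alpha)=3(8-\alpha)$, the Funk--Hecke ratio giving $\kappa_\alpha(\mu_k+\mu_0)\le 24\le\frac15 k(k+1)(k+2)(k+3)$ for $k\ge2$ (your explicit check on $\mu_2$ is just $\mu_2\le\mu_1$, which the paper invokes directly), and division by the eigenvalues for the uniform $H^4$ bound. The only point you leave implicit is that the $\Lambda_k$ are pairwise distinct, so each eigenspace is exactly $\mathcal H_k$. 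This follows at once, as the paper notes, from $k(k+1)(k+2)(k+3)$ increasing and $\mu_k$ decreasing.
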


The factor \(4/5\) is uniform for \(0<\alpha<4\). We do not claim that it is
the optimal coercivity constant for a fixed value of \(\alpha\). The sharp
quantity used in the stability theorem is instead the mode quotient
\(\gamma_{\alpha}\) in \eqref{eq:sharp-local-constant}.

The passage from \(\mathbb R^{4}\) to the compact sphere has a subtle point. A weighted \(L^{2}\)-function becomes an \(L^{2}\)-function on \(\mathbb S^{4}\), but a priori the transformed equation is known only away from the stereographic pole. Since the Green function of a fourth-order operator is logarithmic in dimension four, an \(L^{2}\)-function can still carry a point-mass defect. We prove that the only possible defect consists of a Dirac mass and its first tangential derivatives. Pairing the defect with the five first spherical harmonics, which already lie in the kernel, eliminates all coefficients. This gives a global sphere equation without first proving Euclidean boundedness or using a Kelvin transform across the origin.

The removal of the pole defect yields the corresponding Euclidean kernel statement.

\begin{corollary}\label{cor:euclidean-nondegeneracy}
Let \(\phi\in L^{2}_{w}(\mathbb R^{4})\) be a distributional solution of \eqref{eq:linearized}. Then \(\phi\in C^{\infty}(\mathbb R^{4})\cap L^{\infty}(\mathbb R^{4})\) and
\[
\phi=\sum_{j=1}^{5}a_{j}\phi_{j}
\]
for suitable constants \(a_{1},\ldots,a_{5}\). The same conclusion holds at
every bubble \(U_{\mu,\zeta}\) after translation and dilation. In addition, on
the Euclidean form domain corresponding to \(H^{2}(\mathbb S^{4})\) under
stereographic projection, the quadratic form has Morse index one. Its kernel
is generated by the five symmetry modes, and it is coercive on the orthogonal
complement of the constant mode and those symmetry modes.
\end{corollary}

The complete spectrum also yields a quantitative result for a conformally invariant functional. Let \(|\mathbb S^{4}|=8\pi^{2}/3\), and write
\[
\overline{\Phi}:=\frac{1}{|\mathbb S^{4}|}
\int_{\mathbb S^{4}}\Phi\,d\xi.
\]
Set
\begin{equation*}
\mathcal I_{\alpha}(\Phi)
:=\int_{\mathbb S^{4}}\int_{\mathbb S^{4}}
\frac{e^{\Phi(\xi)}e^{\Phi(\eta)}}{|\xi-\eta|^{\alpha}}\,d\xi d\eta
\end{equation*}
and
\begin{equation}\label{eq:deficit}
\begin{aligned}
\mathcal D_{\alpha}(\Phi)
:={}&\frac12\int_{\mathbb S^{4}}
\Phi P_{4}^{\mathbb S^{4}}\Phi\,d\xi
+8\pi^{2}(8-\alpha)\overline{\Phi}-4\pi^{2}(8-\alpha)
\log\frac{\mathcal I_{\alpha}(\Phi)}
{|\mathbb S^{4}|\mu_{0}(\alpha)}.
\end{aligned}
\end{equation}
For \(a\in B^{5}:=\{a\in\mathbb R^{5}:|a|<1\}\), define
\begin{equation}\label{eq:J-a}
J_{a}(\xi)
:=\left(\frac{1-|a|^{2}}
{1-2a\cdot\xi+|a|^{2}}\right)^{4}
\end{equation}
and
\begin{equation*}
\mathcal M_{\alpha}
:=\left\{c+\frac{8-\alpha}{8}\log J_{a}:
 c\in\mathbb R,\ a\in B^{5}\right\}.
\end{equation*}
Since \(P_{4}^{\mathbb S^{4}}\) annihilates constants, it is natural to use
\begin{equation*}
\dot H^{2}(\mathbb S^{4}):=H^{2}(\mathbb S^{4})/\mathbb R,
\qquad
\|[\Phi]\|_{P}^{2}:=
\int_{\mathbb S^{4}}\Phi P_{4}^{\mathbb S^{4}}\Phi\,d\xi.
\end{equation*}
The Paneitz distance to the extremal manifold is
\begin{equation*}
d_{P}(\Phi,\mathcal M_{\alpha})^{2}
:=\inf_{\Psi\in\mathcal M_{\alpha}}
\int_{\mathbb S^{4}}(\Phi-\Psi)
P_{4}^{\mathbb S^{4}}(\Phi-\Psi)\,d\xi.
\end{equation*}

The sharp endpoint inequality takes the following form.

\begin{corollary}\label{cor:sharp-Adams-Choquard}
Let \(0<\alpha<4\). For every \(\Phi\in H^{2}(\mathbb S^{4})\),
\begin{equation}\label{eq:sharp-Adams-Choquard}
\log\frac{\mathcal I_{\alpha}(\Phi)}
{|\mathbb S^{4}|\mu_{0}(\alpha)}
\leq2\overline{\Phi}
+\frac{1}{8\pi^{2}(8-\alpha)}
\int_{\mathbb S^{4}}\Phi P_{4}^{\mathbb S^{4}}\Phi\,d\xi.
\end{equation}
Equality holds exactly for \(\Phi\in\mathcal M_{\alpha}\).
\end{corollary}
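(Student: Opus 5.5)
Corollary~\ref{cor:sharp-Adams-Choquard} is a sharp logarithmic HLS (Adams--Choquard) inequality on \(\mathbb S^{4}\), and I would deduce it from Theorem~\ref{thm:classification} by a variational argument. First I reduce to \(\mathcal D_{\alpha}\ge0\) with equality exactly on \(\mathcal M_{\alpha}\): after dividing by \(4\pi^{2}(8-\alpha)\), \eqref{eq:sharp-Adams-Choquard} is exactly \(\mathcal D_{\alpha}(\Phi)\ge0\). The functional is invariant under \(\Phi\mapsto\Phi+c\) and under the conformal action \(\Phi\mapsto\Phi\circ\tau+\frac{8-\alpha}{8}\log J_{\tau}\) of the M\"obius group of \(\mathbb S^{4}\). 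For the Paneitz term this uses the conformal covariance of \(P_{4}^{\mathbb S^{4}}\) together with \(P_{4}^{\mathbb S^{4}}\log J_{\tau}+6=6J_{\tau}\). For the Riesz term it uses \(|\tau\xi-\tau\eta|^{\alpha}=J_{\tau}(\xi)^{\alpha/8}J_{\tau}(\eta)^{\alpha/8}|\xi-\eta|^{\alpha}\), which balances the weight \(e^{\Phi}\) against \(J_{\tau}^{(8-\alpha)/8}\). With these invariances, \(\mathcal D_{\alpha}\equiv0\) on \(\mathcal M_{\alpha}\), since \(\mathcal D_{\alpha}(0)=0\) by definition of \(\mu_{0}(\alpha)\).

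Second, I would show that \(\mathcal D_{\alpha}\) is bounded below and attains its infimum. The crude bound comes from Adams' inequality on \(\mathbb S^{4}\), namely \(\log\fint e^{4(\Phi-\overline\Phi)}\le\frac{1}{8\pi^{2}}\|\Phi\|_{P}^{2}\) with \(\frac{1}{8\pi^2}\cdot\frac{1}{4}\)-type constants, combined with the HLS inequality \(\mathcal I_{\alpha}\le C\|e^{\Phi}\|_{L^{8/(8-\alpha)}}^{2}\). Taking the logarithm gives \(\log\mathcal I_{\alpha}\le C+2\overline\Phi+\frac{1}{8\pi^{2}(8-\alpha)}\|\Phi\|_{P}^{2}\), which has exactly the critical constant. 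So the infimum is finite, but minimizing sequences may concentrate. I would handle this in the standard Onofri fashion. Use the conformal invariance to normalize the center of mass of \(e^{\frac{8}{8-\alpha}\Phi}\) to zero (Aubin--Chang--Yang), after which an improved Adams inequality with constant strictly below the critical one gives coercivity. Then extract a weakly convergent subsequence in \(H^{2}\); by compactness of \(\Phi\mapsto e^{\Phi}\) into \(L^{8/(8-\alpha)}\) under the improved bound, the limit is a minimizer.

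Third, the Euler--Lagrange equation of a minimizer \(\Phi\) reads
\[
P_{4}^{\mathbb S^{4}}\Phi+8\pi^{2}(8-\alpha)/|\mathbb S^{4}|=\frac{8\pi^{2}(8-\alpha)}{\mathcal I_{\alpha}(\Phi)}\,e^{\Phi}\mathcal R_{\alpha}e^{\Phi}.
\]
Pulling back by stereographic projection, \(u=\Phi\circ S+\frac{8-\alpha}{2}\log\frac{2}{1+|x|^{2}}+c\), with \(c\) adjusting the constant, gives a solution of \eqref{eq:main}. Its finite mass follows from \(e^{\frac{8}{8-\alpha}\Phi}\in L^{1}\). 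Normality follows because \(u\) is represented through the Green function of \(P_{4}^{\mathbb S^{4}}\), which is logarithmic, so no biharmonic polynomial remainder appears. Theorem~\ref{thm:classification} then gives \(u=U_{\mu,\zeta}\), that is, \(\Phi\in\mathcal M_{\alpha}\). Hence \(\min\mathcal D_{\alpha}=0\), which proves the inequality.

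For the equality case, any \(\Phi\) with \(\mathcal D_{\alpha}(\Phi)=0\) is itself a minimizer, so it satisfies the Euler--Lagrange equation and the same classification applies. The main obstacle is the second step: I need the improved Adams inequality under the balanced condition, in its Choquard form where the exponent is \(8/(8-\alpha)\) rather than \(4\), together with the rigorous conformal normalization. A fallback is Onofri-type flow or a competing-symmetries argument, which sidesteps the compactness analysis.
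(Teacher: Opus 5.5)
Your argument is correct in outline, but it takes a much longer route than the paper. The paper simply composes the two sharp inequalities recorded in Section~2.

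Lemma~\ref{lem:spherical-HLS} with $f=e^{\Phi}$ has its constant normalized so that $f\equiv1$ is extremal. It gives $\log\frac{\mathcal I_\alpha(\Phi)}{|\mathbb S^4|\mu_0(\alpha)}\le 2\overline\Phi+\frac{2}{q_\alpha}\log\bigl(\frac{1}{|\mathbb S^4|}\int_{\mathbb S^4}e^{q_\alpha(\Phi-\overline\Phi)}\,d\xi\bigr)$. Lemma~\ref{lem:Beckner-Adams} with $W=\frac{q_\alpha}{4}(\Phi-\overline\Phi)$ bounds the last logarithm by $\frac{q_\alpha^2}{128\pi^2}\int_{\mathbb S^4}\Phi P_4^{\mathbb S^4}\Phi\,d\xi$. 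Since $\frac{2}{q_\alpha}\cdot\frac{q_\alpha^2}{128\pi^2}=\frac{1}{8\pi^2(8-\alpha)}$, this is \eqref{eq:sharp-Adams-Choquard}. Equality forces equality in both steps, and the Lieb and Beckner classifications each place $\Phi$ in $c+\frac{8-\alpha}{8}\log J_a$.

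Your second step is exactly this composition with an unspecified HLS constant. If you insert the sharp normalized constant, your $C$ becomes $\log(|\mathbb S^4|\mu_0(\alpha))$ and the inequality is already proved at that point. The conformal normalization, the improved Adams inequality and the compactness argument are then unnecessary. Incidentally, no Choquard-specific improved inequality would be needed in any case: the rescaling $W=\frac{q_\alpha}{4}\Phi$ turns your exponent and balancing constraint into the standard ones.

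What your route buys is that it uses neither the sharp HLS constant nor the Lieb and Beckner equality characterizations. Instead it recovers both the sharp constant and the equality cases from existence of a minimizer plus Theorem~\ref{thm:classification}. The pieces check out:
\begin{itemize}
\item $\mathcal D_\alpha$ is $T_\tau$-invariant.
\item Your Euler--Lagrange equation is correct.
\item $u=\Phi\circ S+\frac{8-\alpha}{2}\log\omega+c$ with $e^{2c}=8\pi^{2}(8-\alpha)/\mathcal I_\alpha(\Phi)$ solves \eqref{eq:main} with finite mass.
\item Normality holds.
\end{itemize}
You should make the normality mechanism explicit. The function $-\frac{1}{8\pi^2}\log|\xi-\eta|$ is the Green function of $P_4^{\mathbb S^4}$ modulo constants. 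After writing $|\xi-\eta|=\omega(x)^{1/2}\omega(y)^{1/2}|x-y|$, the term $-\frac{1}{16\pi^2}\bigl(\int F\bigr)\log\omega(x)$ is cancelled by $\frac{8-\alpha}{2}\log\omega(x)$. This works precisely because $\int F=8\pi^2(8-\alpha)$ for this choice of $c$. The cost is heavy machinery (center-of-mass normalization, the improved Adams inequality, Niu's theorem, a regularity bootstrap) where the paper needs two lines.

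There is one computational slip. The correct identity is $P_4^{\mathbb S^4}\log J_\tau+24=24J_\tau$, equivalently $P_4^{\mathbb S^4}(\frac14\log J_\tau)+6=6J_\tau$. With your constant $6$, the $\overline{\Phi\circ\tau}$ terms in the invariance check would not cancel. With $24$ they do, since $24\cdot\frac{8-\alpha}{8}|\mathbb S^4|=8\pi^2(8-\alpha)$.
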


The complete spectral information also determines the sharp local remainder.

\begin{theorem}\label{thm:Adams-Choquard-stability}
Let \(0<\alpha<4\). There exist \(\varepsilon_{\alpha}>0\) and
\(c_{\alpha}>0\) such that
\begin{equation}\label{eq:local-stability-intro}
\mathcal D_{\alpha}(\Phi)
\geq c_{\alpha}d_{P}(\Phi,\mathcal M_{\alpha})^{2}
\end{equation}
whenever \(d_{P}(\Phi,\mathcal M_{\alpha})<\varepsilon_{\alpha}\). Moreover,
\begin{equation}\label{eq:sharp-local-constant}
\lim_{\varepsilon\downarrow0}
\inf_{0<d_{P}(\Phi,\mathcal M_{\alpha})<\varepsilon}
\frac{\mathcal D_{\alpha}(\Phi)}
{d_{P}(\Phi,\mathcal M_{\alpha})^{2}}
=\gamma_{\alpha}
:=\frac{\Lambda_{2}(\alpha)}{240}
=\frac{160-12\alpha-\alpha^{2}}{40(10-\alpha)}
>\frac25.
\end{equation}
The constants in \eqref{eq:local-stability-intro} can be chosen uniformly when
\(\alpha\) ranges in a compact subset of \((0,4)\).
\end{theorem}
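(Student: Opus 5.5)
The plan is to reduce \eqref{eq:local-stability-intro} to a second-order Taylor expansion of \(\mathcal D_{\alpha}\) at the point \(0\in\mathcal M_{\alpha}\). The Hessian there is \(\mathcal L_{\alpha}\), so the spectral information of Theorem~\ref{thm:full-spectrum} applies directly.

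\emph{Step 1 (invariances).} First I check that \(\mathcal D_{\alpha}\), \(d_{P}(\cdot,\mathcal M_{\alpha})\) and \(\mathcal M_{\alpha}\) are invariant under two operations:
\begin{itemize}
\item adding constants, since \(\mathcal D_{\alpha}(\Phi+c)=\mathcal D_{\alpha}(\Phi)\) because the \(\overline{\Phi}\)-term and the \(\log\mathcal I_{\alpha}\)-term shift by \(8\pi^{2}(8-\alpha)c\) each;
\item the conformal action \(\Phi\mapsto\Phi\circ g+\frac{8-\alpha}{8}\log J_{g}\), \(g\) a conformal map of \(\mathbb S^{4}\).
\end{itemize}
For the conformal action I use three facts: the covariance \(P_{4}^{\mathbb S^{4}}(\Phi\circ g)J_{g}=(P_{4}^{\mathbb S^{4}}\Phi)\circ g\), the identity \(|g\xi-g\eta|^{\alpha}=J_{g}(\xi)^{\alpha/8}J_{g}(\eta)^{\alpha/8}|\xi-\eta|^{\alpha}\), and the equality case of Corollary~\ref{cor:sharp-Adams-Choquard}.

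Given \(\Phi\) with \(d_{P}(\Phi,\mathcal M_{\alpha})<\varepsilon\), I show the infimum defining \(d_{P}\) is attained. Minimizing sequences cannot let \(|a|\to1\): there \(\log J_{a}\) concentrates and \(\|\log J_{a}\|_{P}\to\infty\), which contradicts closeness to \(\Phi\). After applying the group action I may therefore assume the nearest point is \(0\). Then \(\Phi=c+\psi\), where the first-order conditions for the minimum give \(\psi\perp_{P}\mathcal H_{1}\), which is the tangent space of \(\mathcal M_{\alpha}\) at \(0\) modulo constants. Since \(P_{4}^{\mathbb S^{4}}\) is diagonal on harmonics, this means \(\psi\perp_{L^{2}}\mathcal H_{0}\oplus\mathcal H_{1}\) after normalizing \(c\). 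We then have \(d_{P}(\Phi,\mathcal M_{\alpha})^{2}=\|\psi\|_{P}^{2}\).

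\emph{Step 2 (expansion).} I expand \(t\mapsto\mathcal D_{\alpha}(t\psi)\) with \(\overline\psi=0\). Writing \(I_{0}=|\mathbb S^{4}|\mu_{0}(\alpha)\), the Funk--Hecke identity \(\mathcal R_{\alpha}1=\mu_{0}\) gives
\[
\mathcal I_{\alpha}'(0)=2\mu_{0}\textstyle\int\psi=0,
\qquad
\mathcal I_{\alpha}''(0)=2\langle\mathcal R_{\alpha}\psi,\psi\rangle+2\mu_{0}\|\psi\|_{L^{2}}^{2}.
\]
Hence
\[
\mathcal D_{\alpha}(\psi)=\tfrac12\langle P_{4}^{\mathbb S^{4}}\psi,\psi\rangle-\tfrac{4\pi^{2}(8-\alpha)}{I_{0}}\bigl(\langle\mathcal R_{\alpha}\psi,\psi\rangle+\mu_{0}\|\psi\|^{2}\bigr)+\mathcal E(\psi).
\]
I then check that \(8\pi^{2}(8-\alpha)/(|\mathbb S^{4}|\mu_{0})=\kappa_{\alpha}\). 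This follows from \eqref{eq:double-identity} transferred to the sphere, or equivalently from \(\Lambda_{0}=-6(8-\alpha)\). With it, the quadratic part is exactly \(\frac12\langle\mathcal L_{\alpha}\psi,\psi\rangle\).

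For the remainder I claim \(|\mathcal E(\psi)|\le C\|\psi\|_{P}^{3}\) for \(\|\psi\|_{P}\le1\). The argument is the third-order Taylor formula with remainder in integral form. The third derivatives involve \(\iint e^{s\psi(\xi)}e^{s\psi(\eta)}|\psi|^{3}|\xi-\eta|^{-\alpha}\). I bound these by Hardy--Littlewood--Sobolev on \(\mathbb S^{4}\), Hölder, and Adams' inequality \(\int e^{p|\psi|}\le C\exp(Cp^{2}\|\psi\|_{P}^{2})\), together with \(\|\psi\|_{L^{q}}\le C_{q}\|\psi\|_{P}\). I expect this to be the main technical obstacle, together with the attainment and non-degeneration of the nearest point in Step~1. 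All constants depend continuously on \(\alpha\), which gives uniformity on compact subsets of \((0,4)\).

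\emph{Step 3 (coercivity and sharp constant).} Decompose \(\psi=\sum_{k\ge2}\psi_{k}\) with \(\psi_{k}\in\mathcal H_{k}\), and write \(P_{k}=k(k+1)(k+2)(k+3)\). Then
\[
\tfrac12\langle\mathcal L_{\alpha}\psi,\psi\rangle=\tfrac12\sum_{k\ge2}\frac{\Lambda_{k}}{P_{k}}\,P_{k}\|\psi_{k}\|^{2}.
\]
The ratio \(\Lambda_{k}/P_{k}=1-\kappa_{\alpha}(\mu_{k}+\mu_{0})/P_{k}\) is increasing in \(k\). This holds because the Funk--Hecke eigenvalues \(\mu_{k}(\alpha)>0\) decrease in \(k\) for \(0<\alpha<4\) while \(P_{k}\) increases. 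Its minimum is therefore attained only at \(k=2\), where it equals \(\Lambda_{2}/120\). Hence \(\mathcal D_{\alpha}(\Phi)\ge(\Lambda_{2}/240-C\varepsilon)\,d_{P}^{2}\), which yields \eqref{eq:local-stability-intro} and the lower bound \(\liminf\ge\gamma_{\alpha}\).

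For the upper bound I take \(\Phi=tY\) with \(Y\in\mathcal H_{2}\). For small \(t\) the nearest point of \(\mathcal M_{\alpha}\) is \(0\), because \(Y\) is already orthogonal to the tangent space, so the quotient equals \(\gamma_{\alpha}+O(t)\). This proves \eqref{eq:sharp-local-constant}. The explicit formula and the bound \(\gamma_{\alpha}>2/5\) follow by inserting the Funk--Hecke values of \(\mu_{2}/\mu_{0}\) into \eqref{eq:Lambda-k-intro}. The bound \(\gamma_{\alpha}>2/5\) is consistent with the uniform estimate \(\Lambda_{2}\ge\frac45 P_{2}\) of Theorem~\ref{thm:full-spectrum}.
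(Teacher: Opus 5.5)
Your proposal is correct and follows the paper's argument in all essentials: invariance under constants and $T_\tau$, nearest-point modulation giving $\Psi\perp_{L^{2}}\mathcal H_0\oplus\mathcal H_1$, a cubic Taylor expansion whose Hessian is $\mathcal L_\alpha$, the mode quotient $\Lambda_k(\alpha)/(2\lambda_k)$ minimized at $k=2$, and the test family $tY$ with $Y\in\mathcal H_2$; folding the remainder directly into $\mathcal D_\alpha(\Phi)\ge(\gamma_\alpha-C\varepsilon)\,d_P(\Phi,\mathcal M_\alpha)^2$ even gives the stability estimate and the lower half of the sharp-constant statement in one stroke. Two details need fixing: the covariance should read $P_4^{\mathbb S^4}(\Phi\circ g)=J_g\,(P_4^{\mathbb S^4}\Phi)\circ g$, and for $tY$ orthogonality to $\mathcal H_1$ only makes $0$ a critical point of the distance, so you must add a nondegeneracy step (the paper's implicit-function uniqueness, or the bound $|a_t|\le Ct$ on the parameter of a nearest extremal together with $(Y,\widehat m_\alpha(a))_P=O(|a|^2)$) to conclude $d_P(tY,\mathcal M_\alpha)^2\ge t^2-O(t^3)$.
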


The distinction between the last two statements is important. Corollary
\ref{cor:sharp-Adams-Choquard} is obtained by composing the sharp spherical
Hardy--Littlewood--Sobolev inequality with Beckner's four-dimensional Adams
inequality. The new assertion is Theorem~\ref{thm:Adams-Choquard-stability}.
The Hessian of \(\mathcal D_\alpha\), after the constant and conformal directions
are removed, is exactly \(\mathcal L_\alpha\). The full spectrum therefore gives
not only positivity but the sharp transverse constant. The second spherical
harmonic space is the unique lowest positive mode.

This mechanism differs from the stability of either factor inequality alone.
Endpoint stability for log-Sobolev and Moser--Onofri inequalities was studied in
\cite{ChenLuTang}; quantitative HLS stability with explicit lower bounds was
developed in \cite{ChenLuTangHLS}. Recent work on Trudinger--Moser stability
appears in \cite{AndradeOliveiraDoMacedoRatzkin}. In the present problem, the
Hessian contains the coupled operator
\(P_{4}^{\mathbb S^{4}}-\kappa_\alpha(\mathcal R_\alpha+\mu_0I)\), and the sharp
constant depends explicitly on the Riesz parameter \(\alpha\). We prove a local
statement. No global Bianchi--Egnell-type estimate in the sense of \cite{BianchiEgnell} is asserted.

The functional-analytic core of the paper is the self-adjoint operator
\(\mathcal L_{\alpha}\), a bounded zeroth-order perturbation of
\(P_{4}^{\mathbb S^{4}}\) with compact Riesz component. Its simultaneous
diagonalization with the spherical Riesz operator produces the complete
spectrum, rather than kernel information alone. The contributions are as
follows. First, Section~3 gives a self-contained reduction and normalization
from the normal distributional finite-mass class to the published
classification theorem. Second, we establish the weak conformal transfer,
eliminate every pole-supported defect, and compute the negative eigenspace,
the conformal kernel, and all positive modes with a uniform gap. Third, we
derive a nearest-point modulation theorem and the sharp local stability
constant in \eqref{eq:sharp-local-constant}. Compared with the planar
exponential Choquard nondegeneracy result \cite{GaoLiMa}, the fourth-order
problem has a genuine Paneitz spectrum and a critical logarithmic pole.
Compared with power-type critical Hartree bubbles
\cite{DuYang,LiLiuTangXu}, the exponential model has zero conformal weight on
functions in dimension four. This makes the complete spherical
diagonalization possible.

The paper is organized as follows. Section 2 records the sharp inequalities,
conformal identities and Funk--Hecke eigenvalues. Section 3 proves the reduction
from normal distributional solutions to the known classification theorem.
Section 4 constructs the global spherical linearized operator and removes the
pole defect. Section 5 computes the complete spectrum. Section 6 proves the
sharp Adams--Choquard inequality, the nearest-point modulation result and the
sharp local stability theorem, together with their Euclidean formulation.

\section{Preliminaries}

We first recall the Hardy--Littlewood--Sobolev inequality.

\begin{lemma}\label{lem:HLS}
Let \(N\geq1\), \(0<\alpha<N\), and \(p,r>1\) satisfy
\[
\frac1p+\frac1r+\frac{\alpha}{N}=2.
\]
Then there exists \(C>0\) such that
\[
\int_{\mathbb R^{N}}\int_{\mathbb R^{N}}
\frac{f(x)g(y)}{|x-y|^{\alpha}}\,dxdy
\leq C\|f\|_{L^{p}(\mathbb R^{N})}\|g\|_{L^{r}(\mathbb R^{N})}.
\]
When \(p=r=\frac{2N}{2N-\alpha}\), the sharp constant is
\[
C(N,\alpha)=
\pi^{\frac{\alpha}{2}}
\frac{\Gamma\left(\frac{N-\alpha}{2}\right)}
{\Gamma\left(N-\frac{\alpha}{2}\right)}
\left(
\frac{\Gamma(N)}{\Gamma\left(\frac N2\right)}
\right)^{\frac{N-\alpha}{N}}.
\]
\end{lemma}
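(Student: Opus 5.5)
This is the classical Hardy--Littlewood--Sobolev inequality, so the plan follows the standard two-part route: the general inequality by real interpolation, and the diagonal sharp constant by Lieb's rearrangement and conformal-invariance argument \cite{LiebSharp}.

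\textbf{General inequality.} We may assume \(f,g\geq0\). Write the left side as \(\int g\,(I_\alpha f)\), where
\[
I_\alpha f:=|x|^{-\alpha}*f .
\]
By Hölder's inequality it suffices to prove \(\|I_\alpha f\|_{L^{r'}}\leq C\|f\|_{L^p}\) with
\[
\frac1{r'}=\frac1p-\frac{N-\alpha}{N}.
\]
Note that \(r'<\infty\) because \(r>1\), and \(p<r'\). The kernel \(|x|^{-\alpha}\) lies in weak \(L^{N/\alpha}\). Hence the weak-type Young (O'Neil) inequality, or Hedberg's pointwise estimate, gives the bound directly:
\[
I_\alpha f(x)\lesssim (Mf(x))^{p/r'}\|f\|_{L^p}^{1-p/r'} .
\]
For Hedberg's estimate, split the kernel at radius \(R\). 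Bound the near part by the maximal function \(Mf\) and the far part by Hölder, then optimize \(R\). The Hardy--Littlewood maximal theorem in \(L^p\) for \(p>1\) then gives the claim. This part is routine.

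\textbf{Sharp constant for \(p=r=2N/(2N-\alpha)\).} First, by the Riesz rearrangement inequality the functional
\[
\frac{\iint f(x)g(y)|x-y|^{-\alpha}\,dx\,dy}{\|f\|_p\|g\|_p}
\]
does not decrease under symmetric decreasing rearrangement. By the Cauchy--Schwarz inequality for the positive-definite kernel \(|x-y|^{-\alpha}\), we may take \(f=g\). Second, the problem is invariant under the conformal group: inversions act by
\[
f\mapsto |x|^{-2N/p}f(x/|x|^2),
\]
which preserves both sides precisely at this exponent. Third, pull back to \(\mathbb S^N\) by stereographic projection. Using the competing-symmetries argument of Carlen--Loss, or Lieb's compactness argument for a maximizing sequence normalized by dilations and translations, one shows that a maximizer exists. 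The same argument shows that every maximizer is, up to symmetries, \(c(1+|x|^2)^{-(2N-\alpha)/2}\), i.e.\ a constant on the sphere.

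\textbf{Evaluating the constant.} Finally, evaluate the quotient at \(h(x)=(1+|x|^2)^{-(2N-\alpha)/2}\). The conformal Riesz identity (compare \eqref{eq:Riesz-bubble}) gives
\[
I_\alpha h=\pi^{N/2}\frac{\Gamma(\frac{N-\alpha}{2})}{\Gamma(N-\frac\alpha2)}\,(1+|x|^2)^{-\alpha/2}.
\]
Integrating \(h^p\) gives the Beta-function value
\[
\pi^{N/2}\frac{\Gamma(N/2)}{\Gamma(N)}.
\]
Combining these yields the stated \(C(N,\alpha)\).

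\textbf{Main obstacle.} The main obstacle is the existence and identification of the maximizer, which is the compactness modulo the noncompact conformal group. For the paper's purposes I would simply cite \cite{LiebSharp} for this step rather than reprove it.
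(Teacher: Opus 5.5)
Your proposal is correct and consistent with the paper. The paper gives no proof here: it simply recalls the classical Hardy--Littlewood--Sobolev inequality with Lieb's sharp diagonal constant \cite{LiebSharp}. Your outline is the standard proof of what is being cited: Hedberg or O'Neil for the general bound, then Riesz rearrangement, conformal invariance and Lieb or Carlen--Loss to show that \(h\) is a maximizer.

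Your evaluation of the constant also checks out. You have \(\int h\,I_\alpha h=\pi^{N}\Gamma(\frac{N-\alpha}{2})\Gamma(\frac N2)/(\Gamma(N-\frac\alpha2)\Gamma(N))\) and \(\|h\|_p^{2}=(\pi^{N/2}\Gamma(\frac N2)/\Gamma(N))^{(2N-\alpha)/N}\), and their quotient is exactly \(C(N,\alpha)\). The lemma needs only that \(h\) is a maximizer, not the full classification of maximizers.
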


For \(N=4\) and \(p=r=\frac{8}{8-\alpha}\),
\begin{equation*}
C(4,\alpha)=
\pi^{\frac{\alpha}{2}}
\frac{\Gamma\left(2-\frac{\alpha}{2}\right)}
{\Gamma\left(4-\frac{\alpha}{2}\right)}
6^{\frac{4-\alpha}{4}}.
\end{equation*}

\begin{lemma}\label{lem:Riesz-estimate}
Let \(0<\alpha<4\), \(\theta>0\), and \(\theta+\alpha>4\). Then
\[
\int_{\mathbb R^{4}}
\frac{1}{|x-y|^{\alpha}}
\frac{1}{(1+|y|^{2})^{\theta/2}}\,dy
\leq C
\begin{cases}
(1+|x|^{2})^{(4-\alpha-\theta)/2},&\theta<4,\\
(1+|x|^{2})^{-\alpha/2}\log(2+|x|),&\theta=4,\\
(1+|x|^{2})^{-\alpha/2},&\theta>4.
\end{cases}
\]
\end{lemma}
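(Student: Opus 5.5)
We split the integral according to the relative size of \(|y|\) and \(|x|\). For \(|x|\le 2\) the bound is immediate: since \(\alpha<4\), the kernel \(|x-y|^{-\alpha}\) is locally integrable, and since \(\theta+\alpha>4\), the tail \(\int_{|y|\ge 4}|y|^{-\alpha-\theta}\,dy\) converges. The left side is therefore bounded by a constant, and each right-hand side is comparable to \(1\) on this set. So it suffices to treat \(|x|\ge2\). There we write \(\mathbb R^{4}=A_{1}\cup A_{2}\cup A_{3}\) with
\[
A_{1}=\{|y-x|<|x|/2\},\qquad A_{2}=\{|y|\ge 2|x|\},\qquad A_{3}=\mathbb R^{4}\setminus(A_{1}\cup A_{2}),
\]
and estimate each region separately, using \(1+|x|^{2}\sim|x|^{2}\).

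\emph{Region \(A_{1}\).} Here \(|y|\sim|x|\), so the weight is \(\lesssim|x|^{-\theta}\). The remaining integral is \(\int_{|y-x|<|x|/2}|x-y|^{-\alpha}dy\lesssim|x|^{4-\alpha}\), which gives the contribution \(|x|^{4-\alpha-\theta}\). When \(\theta\ge4\) this is bounded by \(|x|^{-\alpha}\).

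\emph{Region \(A_{2}\).} Here \(|x-y|\ge|y|/2\), so the integrand is \(\lesssim|y|^{-\alpha-\theta}\). Integrating over \(|y|\ge2|x|\) converges because \(\alpha+\theta>4\), and gives \(\lesssim|x|^{4-\alpha-\theta}\). As before, this is \(\le|x|^{-\alpha}\) when \(\theta\ge4\).

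\emph{Region \(A_{3}\).} Here \(|y|<2|x|\) and \(|x-y|\ge|x|/2\), so \(|x-y|^{-\alpha}\lesssim|x|^{-\alpha}\). The contribution is at most
\[
|x|^{-\alpha}\int_{|y|<2|x|}(1+|y|^{2})^{-\theta/2}\,dy .
\]
The radial integral \(\int_{0}^{2|x|}r^{3}(1+r^{2})^{-\theta/2}dr\) behaves according to \(\theta\):
\begin{itemize}
\item if \(\theta<4\), it is \(\lesssim|x|^{4-\theta}\);
\item if \(\theta=4\), it is \(\lesssim\log(2+|x|)\);
\item if \(\theta>4\), it is bounded.
\end{itemize}
This produces exactly the three cases of the statement.

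Adding the three regional bounds finishes the proof. The only step needing any care is region \(A_{3}\), which is where the logarithm at \(\theta=4\) comes from. I expect no genuine obstacle; the hypothesis \(\theta+\alpha>4\) is used only for convergence in region \(A_{2}\) and in the bounded-\(x\) tail.
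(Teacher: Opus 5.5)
Your proof is correct and is essentially the paper's argument. You use the same reduction to \(|x|>2\) and the same three-region decomposition: a singular region where the weight is \(\lesssim|x|^{-\theta}\), the tail \(|y|\ge 2|x|\) handled via \(|x-y|\ge|y|/2\), and a region where \(|x-y|\gtrsim|x|\) whose radial integral produces the three cases and the logarithm at \(\theta=4\). The only difference is cosmetic. You isolate the singularity with the ball \(\{|y-x|<|x|/2\}\), whereas the paper uses the annulus \(\{|x|/2<|y|<2|x|\}\), so in your version the logarithm comes from \(A_{3}\) rather than from the paper's inner ball \(E_{1}\).
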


\begin{proof}
The estimate is immediate for \(|x|\leq2\), because
\(\alpha<4\) and \(\theta+\alpha>4\). Assume that \(|x|>2\), and
split \(\mathbb R^{4}\) into
\[
E_{1}=\{|y|\leq |x|/2\},\qquad
E_{2}=\{|x|/2<|y|<2|x|\},\qquad
E_{3}=\{|y|\geq2|x|\}.
\]
On \(E_{1}\), \(|x-y|\geq |x|/2\), and hence
\[
\int_{E_{1}}\frac{dy}{|x-y|^{\alpha}(1+|y|^{2})^{\theta/2}}
\leq C|x|^{-\alpha}
\int_{0}^{|x|/2}\frac{r^{3}}{(1+r^{2})^{\theta/2}}\,dr.
\]
The last integral is bounded by \(C|x|^{4-\theta}\) if
\(\theta<4\), by \(C\log|x|\) if \(\theta=4\), and by a constant
if \(\theta>4\). On \(E_{2}\),
\((1+|y|^{2})^{-\theta/2}\leq C|x|^{-\theta}\), and therefore
\[
\int_{E_{2}}\frac{dy}{|x-y|^{\alpha}(1+|y|^{2})^{\theta/2}}
\leq C|x|^{-\theta}
\int_{B_{3|x|}(x)}|x-y|^{-\alpha}\,dy
\leq C|x|^{4-\alpha-\theta}.
\]
Finally, on \(E_{3}\), \(|x-y|\geq |y|/2\), so
\[
\int_{E_{3}}\frac{dy}{|x-y|^{\alpha}(1+|y|^{2})^{\theta/2}}
\leq C\int_{2|x|}^{\infty}r^{3-\alpha-\theta}\,dr
\leq C|x|^{4-\alpha-\theta}.
\]
Combining the three estimates and replacing powers of \(|x|\) by the
equivalent powers of \(1+|x|^{2}\) proves the result.
\end{proof}

\begin{lemma}\label{lem:Riesz-regularity}
Let \(0<\alpha<4\), \(k\geq0\), and \(0<\beta<1\). If \(g\in C^{k,\beta}_{c}(\mathbb R^{4})\), then
\[
I_{\alpha}g(x):=
\int_{\mathbb R^{4}}\frac{g(y)}{|x-y|^{\alpha}}\,dy
\]
belongs to \(C^{k,\beta}_{\operatorname{loc}}(\mathbb R^{4})\).
\end{lemma}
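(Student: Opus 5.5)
The plan is to treat this as a standard potential-theory statement. I localize first, then use the classical fact that the Riesz kernel \(|z|^{-\alpha}\) with \(0<\alpha<4\) is a locally integrable homogeneous kernel of degree \(-\alpha>-4\). Fix a ball \(B_R\) and write \(I_\alpha g = K * g\) with \(K(z)=|z|^{-\alpha}\). Since \(g\) has compact support, all derivatives up to order \(k\) can be moved onto \(g\): for \(|\gamma|\le k\),
\[
\partial^{\gamma} I_{\alpha}g=\int_{\mathbb R^{4}}\frac{\partial^{\gamma}g(y)}{|x-y|^{\alpha}}\,dy = I_{\alpha}(\partial^{\gamma}g).
\]
To justify this, I write \(I_\alpha g(x)=\int |z|^{-\alpha} g(x-z)\,dz\) and differentiate under the integral sign. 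Because \(|z|^{-\alpha}\in L^1_{\mathrm{loc}}\) and \(\partial^\gamma g\) is bounded and compactly supported, the difference quotients are dominated by \(\|\partial^{\gamma}g\|_{\infty}|z|^{-\alpha}\chi_{B_{R'}}(z)\) for \(x\) in a bounded set. This shows \(I_\alpha g\in C^k\) and reduces the claim to the case \(k=0\): if \(h\in C^{0,\beta}_c\), then \(I_\alpha h\in C^{0,\beta}_{\mathrm{loc}}\).

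For \(k=0\) I use the same translation form. For \(x,x'\) in a bounded set \(B_R\),
\[
|I_\alpha h(x)-I_\alpha h(x')|\le \int_{\mathbb R^4}|z|^{-\alpha}\,|h(x-z)-h(x'-z)|\,dz \le [h]_{C^{0,\beta}}|x-x'|^{\beta}\int_{B_{R+R_0}}|z|^{-\alpha}\,dz.
\]
Here \(\operatorname{supp}h\subset B_{R_0}\), so only \(|z|\le R+R_0\) contributes. The last integral is finite since \(\alpha<4\). Boundedness of \(I_\alpha h\) on \(B_R\) follows in the same way from \(\|h\|_\infty\int_{B_{R+R_0}}|z|^{-\alpha}dz\).

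Applying this to each \(\partial^\gamma g\) with \(|\gamma|= k\), each of which lies in \(C^{0,\beta}_c\), gives \(I_\alpha g\in C^{k,\beta}(\overline{B_R})\) for every \(R\). This is the claim. In fact the argument yields the stronger statement that \(I_\alpha g\) is \(C^{k,1}\)-type smoother, but we do not need it.

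I expect no step to be a real obstacle. The only point requiring care is justifying the interchange of derivatives and integral, which needs the translation form of the convolution and local integrability of the kernel. Moving derivatives onto \(g\) avoids any singular-integral (Calderón–Zygmund) theory, which would be needed if one differentiated the kernel itself.
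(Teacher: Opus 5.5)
Your proof is correct and follows the paper's argument: you move every derivative onto the compactly supported density using local integrability of \(|z|^{-\alpha}\), then transfer the H\"older modulus of \(D^{\gamma}g\), \(|\gamma|=k\), through the translated convolution over a fixed enlarged ball. Drop the closing remark about \(C^{k,1}\)-type smoothness: your argument only transfers the modulus of continuity of \(g\), and the Riesz potential gains only order \(4-\alpha\), so Lipschitz regularity of the \(k\)-th derivatives fails in general when \(\beta+4-\alpha<1\).
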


\begin{proof}
Let \(K\Subset\mathbb R^{4}\), and choose a ball \(B\) containing both
\(K\) and the support of \(g\). Write
\(K_{\alpha}(x)=|x|^{-\alpha}\). For every multi-index
\(\gamma\) with \(|\gamma|\leq k\), distributional convolution gives
\[
D^{\gamma}(K_{\alpha}*g)
=K_{\alpha}*D^{\gamma}g.
\]
This identity follows by moving derivatives from the kernel to the compactly
supported density. Since \(K_{\alpha}\in L^{1}_{\operatorname{loc}}
(\mathbb R^{4})\), the right-hand side is continuous on \(K\). If
\(|\gamma|=k\), then, for \(x,x+h\in K\),
\[
\begin{aligned}
&|K_{\alpha}*D^{\gamma}g(x+h)
-K_{\alpha}*D^{\gamma}g(x)|\\
&\quad=\left|\int_{\mathbb R^{4}}K_{\alpha}(x-y)
\bigl(D^{\gamma}g(y+h)-D^{\gamma}g(y)\bigr)\,dy\right|
\leq C_{K,g}|h|^{\beta}.
\end{aligned}
\]
Here we used a change of variables in the first term and enlarged the fixed
ball containing the supports of the translated densities. The integral of
\(K_{\alpha}\) over that ball is finite. Thus all derivatives of order
\(k\) are locally \(\beta\)-H\"older continuous. The lower derivatives
are treated in the same way. This proves
\(I_{\alpha}g\in C^{k,\beta}_{\operatorname{loc}}(\mathbb R^{4})\);
see also \cite{Stein}.
\end{proof}

The logarithmic fundamental solution of \(\Delta^{2}\) in \(\mathbb R^{4}\) satisfies
\begin{equation*}
\Delta^{2}\left(-\frac{1}{8\pi^{2}}\log|x|\right)=\delta_{0}
\qquad\hbox{in }\mathcal D'(\mathbb R^{4}).
\end{equation*}

Let \(S:\mathbb R^{4}\to\mathbb S^{4}\setminus\{p\}\), \(p=(0,0,0,0,-1)\), be
\begin{equation}\label{eq:stereo}
S(x)=
\left(
\frac{2x}{1+|x|^{2}},
\frac{1-|x|^{2}}{1+|x|^{2}}
\right),
\end{equation}
and set
\begin{equation}\label{eq:omega}
\omega(x):=\frac{2}{1+|x|^{2}}.
\end{equation}
Then
\begin{equation}\label{eq:stereo-identities}
d\xi=\omega(x)^{4}\,dx,
\qquad
|S(x)-S(y)|=\omega(x)^{1/2}\omega(y)^{1/2}|x-y|.
\end{equation}
For a function \(f\) on \(\mathbb R^{4}\), write \(S_{*}f=f\circ S^{-1}\), and for a function \(\Phi\) on \(\mathbb S^{4}\), write \(S^{*}\Phi=\Phi\circ S\). Thus
\begin{equation}\label{eq:L2-isometry}
\int_{\mathbb S^{4}}|S_{*}f|^{2}\,d\xi
=16\int_{\mathbb R^{4}}
\frac{|f(x)|^{2}}{(1+|x|^{2})^{4}}\,dx.
\end{equation}

The Paneitz operator on the round four-sphere is
\begin{equation*}
P_{4}^{\mathbb S^{4}}
=(-\Delta_{\mathbb S^{4}})(-\Delta_{\mathbb S^{4}}+2).
\end{equation*}
Its conformal covariance in dimension four gives
\begin{equation}\label{eq:Paneitz-covariance}
\Delta^{2}(S^{*}\Phi)(x)
=\omega(x)^{4}
S^{*}\bigl(P_{4}^{\mathbb S^{4}}\Phi\bigr)(x)
\end{equation}
for smooth \(\Phi\); see \cite{Paneitz,Beckner}. If \(\mathcal H_{k}\) denotes the spherical harmonics of degree \(k\), then
\[
L^{2}(\mathbb S^{4})=\bigoplus_{k=0}^{\infty}\mathcal H_{k},
\qquad
-\Delta_{\mathbb S^{4}}Y=k(k+3)Y,
\]
and therefore
\begin{equation}\label{eq:Paneitz-eigen}
P_{4}^{\mathbb S^{4}}Y
=k(k+1)(k+2)(k+3)Y,
\qquad Y\in\mathcal H_{k}.
\end{equation}
Moreover,
\[
\mathcal H_{1}=\operatorname{span}\{\xi_{1},\xi_{2},\xi_{3},\xi_{4},\xi_{5}\}.
\]

We next record the spherical diagonalization of the Riesz kernel.

\begin{lemma}\label{lem:Funk-Hecke}
For \(0<\alpha<4\), \(k\geq0\), and \(Y\in\mathcal H_{k}\),
\begin{equation*}
\int_{\mathbb S^{4}}\frac{Y(\eta)}{|\xi-\eta|^{\alpha}}\,d\eta
=\mu_{k}(\alpha)Y(\xi),
\end{equation*}
where
\begin{equation}\label{eq:mu-k}
\mu_{k}(\alpha)=
2^{4-\alpha}\pi^{2}
\frac{
\Gamma\left(2-\frac{\alpha}{2}\right)
\Gamma\left(k+\frac{\alpha}{2}\right)
}{
\Gamma\left(\frac{\alpha}{2}\right)
\Gamma\left(k+4-\frac{\alpha}{2}\right)
}.
\end{equation}
In particular,
\begin{equation}\label{eq:mu0-mu1}
\mu_{0}(\alpha)=
\frac{2^{6-\alpha}\pi^{2}}{(6-\alpha)(4-\alpha)},
\qquad
\mu_{1}(\alpha)=
\frac{2^{6-\alpha}\pi^{2}\alpha}
{(8-\alpha)(6-\alpha)(4-\alpha)}.
\end{equation}
Furthermore,
\begin{equation}\label{eq:mu-monotone}
\frac{\mu_{k+1}(\alpha)}{\mu_{k}(\alpha)}
=\frac{k+\frac{\alpha}{2}}{k+4-\frac{\alpha}{2}}<1.
\end{equation}
\end{lemma}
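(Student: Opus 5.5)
The plan is to apply the Funk--Hecke formula to the zonal kernel \(t\mapsto(2-2t)^{-\alpha/2}\), where \(t=\xi\cdot\eta\). This kernel is a function of \(\xi\cdot\eta\) alone, because \(|\xi-\eta|^{2}=2-2\xi\cdot\eta\) on \(\mathbb S^{4}\).

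The Funk--Hecke theorem on \(\mathbb S^{4}\) (so \(n=5\), \(\lambda=\frac32\)) states that any integrable zonal kernel \(K(\xi\cdot\eta)\) acts on \(\mathcal H_{k}\) as multiplication by
\[
\mu_{k}=|\mathbb S^{3}|\int_{-1}^{1}K(t)\frac{C_{k}^{3/2}(t)}{C_{k}^{3/2}(1)}(1-t^{2})\,dt,
\qquad |\mathbb S^{3}|=2\pi^{2}.
\]
Integrability is clear: near \(t=1\) the integrand behaves like \((1-t)^{-\alpha/2}\), and \(\alpha<4\). So the eigenfunction property is immediate, and the only real content is evaluating this integral in closed form.

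For the evaluation I would use Rodrigues' formula for Gegenbauer polynomials,
\[
C_{k}^{3/2}(t)(1-t^{2})=c_{k}\frac{d^{k}}{dt^{k}}(1-t^{2})^{k+1},
\]
with \(c_{k}\) explicit. Integrating by parts \(k\) times moves the derivatives onto \((1-t)^{-\alpha/2}\). The boundary terms vanish because \((1-t^{2})^{k+1}\) vanishes to order \(k+1\) at \(\pm1\), and because \((1-t)^{k+1-j-\alpha/2}\to0\) for \(j\le k\) when \(\alpha<4\), as \(k+1-\alpha/2>k-1\). Each differentiation produces the factor
\[
\frac{d^{k}}{dt^{k}}(1-t)^{-\alpha/2}=\Bigl(\frac{\alpha}{2}\Bigr)_{k}(1-t)^{-\alpha/2-k}.
\]
This leaves a Beta integral, \(\int_{-1}^{1}(1-t)^{1-\alpha/2}(1+t)^{k+1}\,dt\), which equals \(2^{k+3-\alpha/2}B(2-\frac{\alpha}{2},k+2)\). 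Collecting the Pochhammer symbol \((\alpha/2)_{k}=\Gamma(k+\alpha/2)/\Gamma(\alpha/2)\), the normalization \(C_{k}^{3/2}(1)=(k+1)(k+2)/2\), and the power \(2^{-\alpha/2}\) from \(K\), the \(k\)-dependent factorials should cancel. The result should be \eqref{eq:mu-k}.

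This constant bookkeeping is the main obstacle. To guard against errors I would check the final formula at \(k=0\) independently, by directly computing
\[
2\pi^{2}\int_{-1}^{1}(2-2t)^{-\alpha/2}(1-t^{2})\,dt=\frac{2^{6-\alpha}\pi^{2}}{(6-\alpha)(4-\alpha)},
\]
which is a simple Beta integral. An alternative route is to compare with the Euclidean formula: by \eqref{eq:stereo-identities}, the Riesz kernel on the sphere is conformally equivalent to the Euclidean one with weights, and the known Fourier multiplier of \(|x|^{-\alpha}\) gives the same spectrum. I would use this only as a cross-check.

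Once \eqref{eq:mu-k} is established, the remaining claims follow quickly. The special values \eqref{eq:mu0-mu1} come from \(\Gamma(2-\frac\alpha2)/\Gamma(4-\frac\alpha2)=1/[(3-\frac\alpha2)(2-\frac\alpha2)]\) and \(\Gamma(1+\frac\alpha2)/\Gamma(\frac\alpha2)=\frac\alpha2\). The ratio \eqref{eq:mu-monotone} follows from \(\Gamma(z+1)=z\Gamma(z)\) applied to both \(k\)-dependent Gamma factors. The ratio is \(<1\) because \(k+\frac\alpha2<k+4-\frac\alpha2\) exactly when \(\alpha<4\).
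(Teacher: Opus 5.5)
Your proposal is correct and follows the paper's route. The paper applies Funk--Hecke to the zonal kernel \((2-2t)^{-\alpha/2}\), evaluates a Beta integral, and uses \(\Gamma(s+1)=s\Gamma(s)\) for the special values and the ratio; you additionally carry out the Rodrigues-formula integration by parts that the paper leaves to its references, and the constants do come out to the stated formula for \(\mu_{k}(\alpha)\). One small slip is the integrability justification: the Funk--Hecke integrand behaves like \((1-t)^{1-\alpha/2}\) near \(t=1\), because the weight \(1-t^{2}\) contributes a factor \(1-t\), and that is why \(\alpha<4\) is the right condition, whereas \((1-t)^{-\alpha/2}\) alone would only be integrable for \(\alpha<2\) (similarly, each boundary term behaves like \((1-t)^{2-\alpha/2}\), independently of the integration step).
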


\begin{proof}
Apply the Funk--Hecke formula to the zonal kernel \(|\xi-\eta|^{-\alpha}=(2(1-\xi\cdot\eta))^{-\alpha/2}\). The beta integral gives \eqref{eq:mu-k}; the formulas for \(k=0,1\) follow from \(\Gamma(s+1)=s\Gamma(s)\), and \eqref{eq:mu-monotone} follows by taking the quotient of consecutive eigenvalues; see \cite{AtkinsonHan,DaiXu}.
\end{proof}

The volume of the round sphere is
\begin{equation}\label{eq:sphere-volume}
|\mathbb S^{4}|=\frac{8\pi^{2}}{3}.
\end{equation}

The Euclidean sharp inequality transfers to the sphere as follows.

\begin{lemma}\label{lem:spherical-HLS}
Let
\begin{equation}\label{eq:q-alpha}
q_{\alpha}:=\frac{8}{8-\alpha}.
\end{equation}
For every nonnegative \(f\in L^{q_{\alpha}}(\mathbb S^{4})\),
\begin{equation*}
\frac{1}{|\mathbb S^{4}|\mu_{0}(\alpha)}
\int_{\mathbb S^{4}}\int_{\mathbb S^{4}}
\frac{f(\xi)f(\eta)}{|\xi-\eta|^{\alpha}}\,d\xi d\eta
\leq
\left(\frac{1}{|\mathbb S^{4}|}
\int_{\mathbb S^{4}}f^{q_{\alpha}}\,d\xi\right)^{2/q_{\alpha}}.
\end{equation*}
Equality holds if and only if
\begin{equation*}
f(\xi)=cJ_{a}(\xi)^{1/q_{\alpha}}
\end{equation*}
for some \(c>0\) and \(a\in B^{5}\), where \(J_{a}\) is defined by \eqref{eq:J-a}.
\end{lemma}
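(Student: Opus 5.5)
We would obtain Lemma~\ref{lem:spherical-HLS} by transplanting the sharp Euclidean inequality of Lemma~\ref{lem:HLS} with \(N=4\) and \(p=r=q_{\alpha}\) through the stereographic projection \(S\). Given nonnegative \(f\in L^{q_{\alpha}}(\mathbb S^{4})\), define
\[
F(x):=\omega(x)^{4/q_{\alpha}}(S^{*}f)(x)=\omega(x)^{\frac{8-\alpha}{2}}f(S(x)).
\]
By \eqref{eq:stereo-identities}, \(\int_{\mathbb R^{4}}F^{q_{\alpha}}dx=\int_{\mathbb S^{4}}f^{q_{\alpha}}d\xi\). The kernel identity \(|S(x)-S(y)|^{-\alpha}=\omega(x)^{-\alpha/2}\omega(y)^{-\alpha/2}|x-y|^{-\alpha}\) together with \(d\xi\,d\eta=\omega(x)^{4}\omega(y)^{4}dx\,dy\) gives
\[
\int_{\mathbb S^{4}}\int_{\mathbb S^{4}}\frac{f(\xi)f(\eta)}{|\xi-\eta|^{\alpha}}d\xi d\eta=\int_{\mathbb R^{4}}\int_{\mathbb R^{4}}\frac{F(x)F(y)}{|x-y|^{\alpha}}dxdy,
\]
since \(4-\alpha/2=(8-\alpha)/2=4/q_{\alpha}\). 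Hence the spherical double integral is at most \(C(4,\alpha)\|f\|_{L^{q_{\alpha}}(\mathbb S^{4})}^{2}\), with the sharp constant and with equality exactly when \(F\) is an HLS extremal.

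Next we would identify the constant. The constant function \(f\equiv1\) corresponds to \(F=\omega^{(8-\alpha)/2}\), which is a Lieb extremal \(c(1+|x|^{2})^{-(8-\alpha)/2}\). Equality in Euclidean HLS then gives
\[
|\mathbb S^{4}|\mu_{0}(\alpha)=C(4,\alpha)|\mathbb S^{4}|^{2/q_{\alpha}},
\]
using Lemma~\ref{lem:Funk-Hecke} for the left side. Therefore \(C(4,\alpha)=|\mathbb S^{4}|^{1-2/q_{\alpha}}\mu_{0}(\alpha)\), and dividing the inequality by \(|\mathbb S^{4}|\mu_{0}(\alpha)\) yields exactly the normalized form stated in the lemma. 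This avoids a direct Gamma-function check, although one can confirm it against \eqref{eq:mu0-mu1} as a sanity test.

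For the equality case, Lieb's theorem states that the Euclidean extremals are exactly \(F(x)=c\,(\lambda/(1+\lambda^{2}|x-x_{0}|^{2}))^{(8-\alpha)/2}\). Pulled back to the sphere, such an \(F\) is \(f=F\,\omega^{-(8-\alpha)/2}\circ S^{-1}\), which is the ratio of two standard bubbles raised to the power \((8-\alpha)/2=4/q_{\alpha}\). The main step is to show that this ratio \((\lambda(1+|x|^{2})/(1+\lambda^{2}|x-x_{0}|^{2}))^{4}\) equals \(J_{a}(S(x))\), for a bijective correspondence \((\lambda,x_{0})\leftrightarrow a\in B^{5}\), up to the constant \(c\). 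This is the conformal-factor identity for the Möbius maps of \(\mathbb S^{4}\): \(J_{a}\) is the Jacobian of the conformal transformation of the sphere whose pullback to \(\mathbb R^{4}\) is a dilation composed with a translation.

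We would verify this by writing \(1-2a\cdot S(x)+|a|^{2}\) explicitly with \(a=(a',a_{5})\), clearing the factor \(1+|x|^{2}\), and completing the square to obtain \(A(1+\lambda^{2}|x-x_{0}|^{2})\). Matching coefficients gives
\[
\lambda=\frac{1-|a|^{2}}{(1+|a|^{2}-2a_{5})}\cdot\frac{1}{\lambda}\ \text{(up to a fixed normalization)},\qquad x_{0}=\frac{a'}{1-a_{5}},
\]
so the parametrization is onto. Conversely, every \(J_{a}^{1/q_{\alpha}}\) pulls back to an extremal, so equality holds for it. The coefficient bookkeeping in this identification is the only delicate point; everything else is a direct change of variables.
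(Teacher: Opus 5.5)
Your route is the paper's. You pull the sharp Euclidean HLS inequality back through \(S\) using \(4/q_{\alpha}+\alpha/2=4\). You fix the constant by testing \(f\equiv1\), which is legitimate, as you note, because \(\omega^{(8-\alpha)/2}\) is itself a Lieb extremal. You then read off the equality cases from Lieb's classification. The paper just cites Lieb and Frank--Lieb for the spherical equality statement, so your explicit identification of the extremals is extra detail, not a different argument.

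That extra detail is wrong as written, though. Your relation for \(\lambda\) is circular, since \(\lambda\) appears on both sides. Also, \(x_{0}=a'/(1-a_{5})\) is not the centre: for \(a=(t,0,0,0,0)\) the correct centre is \(\frac{2t}{1+t^{2}}e_{1}\), not \(te_{1}\). Put \(A:=1+|a|^{2}+2a_{5}\) and \(B:=1+|a|^{2}-2a_{5}\). Then \((1+|x|^{2})(1-2a\cdot S(x)+|a|^{2})=A|x|^{2}-4a'\cdot x+B\). Since \(AB-4|a'|^{2}=(1-|a|^{2})^{2}\), completing the square gives, with no extra constant,
\[
J_{a}(S(x))=\left(\frac{\lambda(1+|x|^{2})}{1+\lambda^{2}|x-x_{0}|^{2}}\right)^{4},
\qquad
\lambda=\frac{A}{1-|a|^{2}},
\qquad
x_{0}=\frac{2a'}{A}.
\]
You inferred ontoness from the incorrect formulas, so it needs its own argument. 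It follows from an explicit inverse. Given \(\lambda>0\) and \(x_{0}\in\mathbb R^{4}\), set
\[
u:=\frac{4\lambda}{(1+\lambda)^{2}+\lambda^{2}|x_{0}|^{2}}\in(0,1],
\qquad
a':=\frac{\lambda u}{2}\,x_{0},
\qquad
a_{5}:=\frac{(1+\lambda)u}{2}-1.
\]
A direct check gives \(|a'|^{2}+a_{5}^{2}=1-u<1\) and \(A=\lambda u\), which recovers \((\lambda,x_{0})\). With this correction your proof is complete. Like the paper, you tacitly exclude \(f\equiv0\), for which equality holds trivially.
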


\begin{proof}
Apply the sharp Euclidean Hardy--Littlewood--Sobolev inequality in Lemma~\ref{lem:HLS} and use stereographic projection. Formula \eqref{eq:stereo-identities} cancels the conformal weights because
\begin{equation}\label{eq:HLS-weight-cancellation}
\frac{1}{q_{\alpha}}+\frac{\alpha}{8}=1.
\end{equation}
The constant is fixed by testing \(f\equiv1\). Indeed,
\begin{equation*}
\int_{\mathbb S^{4}}\int_{\mathbb S^{4}}
\frac{1}{|\xi-\eta|^{\alpha}}\,d\xi d\eta
=|\mathbb S^{4}|\mu_{0}(\alpha).
\end{equation*}
The equality statement is the spherical form of Lieb's classification of HLS extremals; see \cite{FrankLieb,LiebSharp}.
\end{proof}

The second endpoint ingredient is Beckner's four-dimensional Adams inequality.

\begin{lemma}\label{lem:Beckner-Adams}
For every \(W\in H^{2}(\mathbb S^{4})\),
\begin{equation}\label{eq:Beckner-Adams}
\log\left(\frac{1}{|\mathbb S^{4}|}
\int_{\mathbb S^{4}}e^{4(W-\overline W)}\,d\xi\right)
\leq\frac{1}{8\pi^{2}}
\int_{\mathbb S^{4}}W P_{4}^{\mathbb S^{4}}W\,d\xi.
\end{equation}
Equality holds if and only if
\begin{equation}\label{eq:Beckner-Adams-equality}
W(\xi)=c+\frac14\log J_{a}(\xi)
\end{equation}
for some \(c\in\mathbb R\) and \(a\in B^{5}\).
\end{lemma}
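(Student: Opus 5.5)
This is Beckner's inequality (Beckner, Annals 1993), the four-dimensional case of the sharp Moser--Trudinger--Onofri inequality on $\mathbb S^{n}$. I plan to cite it essentially verbatim, checking only that the normalization matches. The general form reads
\[
\log\frac{1}{|\mathbb S^{n}|}\int_{\mathbb S^{n}} e^{F-\overline F}\,d\xi
\leq\frac{1}{2n!\,|\mathbb S^{n}|}\int_{\mathbb S^{n}} F P_{n}F\,d\xi .
\]
For $n=4$ we have $2\cdot 4!\cdot|\mathbb S^{4}|=48\cdot 8\pi^{2}/3=128\pi^{2}$. Substituting $F=4W$ turns the right-hand side into $16/(128\pi^{2})=1/(8\pi^{2})$ times $\int W P_{4}^{\mathbb S^{4}} W\,d\xi$, which is exactly \eqref{eq:Beckner-Adams}.

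As an independent consistency check on the constant, I would linearize at $W=0$. Take $W=\varepsilon Y$ with $Y\in\mathcal H_{k}$ and $k\geq1$. The left side is $8\varepsilon^{2}\overline{Y^{2}}+O(\varepsilon^{3})$. The right side is $\varepsilon^{2}\,k(k+1)(k+2)(k+3)\,\overline{Y^{2}}\,|\mathbb S^{4}|/(8\pi^{2})$, which equals $\varepsilon^{2}\,k(k+1)(k+2)(k+3)\,\overline{Y^{2}}/3$. For $k=1$ both sides equal $8\varepsilon^{2}\overline{Y^{2}}$, matching the fact that $\mathcal H_{1}$ is tangent to the conformal orbit. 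This confirms the constant.

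For the equality case, I would use conformal invariance. Let $\tau_{a}$ be the conformal transformation of $\mathbb S^{4}$ with Jacobian $J_{a}$. The covariance $P_{4}(\Psi\circ\tau_{a})=J_{a}\,(P_{4}\Psi)\circ\tau_{a}$ and the fact that $\tfrac14\log J_{a}$ solves the constant-$Q$ equation $P_{4}w+6=6e^{4w}$ together show that the functional
\[
\frac{1}{8\pi^{2}}\int_{\mathbb S^{4}} W P_{4}^{\mathbb S^{4}}W\,d\xi+4\overline W-\log\frac{1}{|\mathbb S^{4}|}\int_{\mathbb S^{4}} e^{4W}\,d\xi
\]
is invariant under $W\mapsto W\circ\tau_{a}+\tfrac14\log J_{a}$. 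It therefore vanishes on the whole set \eqref{eq:Beckner-Adams-equality}, since it vanishes at the constants.

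Conversely, let $W$ be an extremal. After applying a suitable $\tau_{a}$ we may assume the center-of-mass condition $\int_{\mathbb S^{4}}\xi\,e^{4W}\,d\xi=0$. Then the improved inequality under this constraint (Chang--Yang), or the uniqueness argument of Beckner via rearrangement and the Euler--Lagrange equation $P_{4}W+6=6e^{4W}/\overline{e^{4W}}$ together with the classification of its solutions (Chang--Yang, Wei--Xu \cite{WeiXu}), forces $W$ to be constant. Pulling back gives \eqref{eq:Beckner-Adams-equality}.

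The main obstacle is this rigidity step, namely excluding nonstandard extremals. I would not reprove it. Instead I would invoke the classification of solutions of the constant $Q$-curvature equation on $\mathbb S^{4}$, transferred to $\mathbb R^{4}$ by stereographic projection, where it becomes Lin's/Wei--Xu's classification of finite-volume normal solutions of $\Delta^{2}u=6e^{4u}$ \cite{Lin,WeiXu}. The existence of a maximizer, which is needed before invoking that classification, comes from Beckner's or Adams' compactness argument \cite{Adams,Beckner}.
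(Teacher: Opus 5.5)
Your proposal is correct. For the inequality itself it matches the paper's argument: both cite Beckner and confirm the constant by the second variation on $\mathcal H_{1}$. There the two sides agree because $\mathcal H_{1}$ is tangent to the conformal orbit. Your translation of Beckner's general normalization $1/(2\,n!\,|\mathbb S^{n}|)$ with $F=4W$ is a correct extra check.

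The routes differ only for the equality statement. The paper simply invokes Beckner's equality classification. You instead derive the ``if'' direction from conformal invariance. You then reduce the ``only if'' direction to the Lin/Wei--Xu classification of normal finite-volume solutions of $\Delta^{2}u=6e^{4u}$, via the Euler--Lagrange equation $P_{4}^{\mathbb S^{4}}W+6=6e^{4W}/\overline{e^{4W}}$ and stereographic projection. This is the same reduction to a Liouville classification that the paper uses in Section~3. It makes the equality set independent of the exact form of Beckner's equality clause, but a few steps are left implicit.

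Write $E(W)$ for the right-hand side minus the left-hand side of \eqref{eq:Beckner-Adams}, and $w_{\tau}=\frac14\log J_{\tau}$.

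\textbf{Invariance.} Covariance and the constant-$Q$ equation only give the cocycle identity $E(W\circ\tau+w_{\tau})=E(W)+E(w_{\tau})$. To get $E(w_{\tau})=0$, take $W=w_{\tau^{-1}}$ and use $w_{\tau^{-1}}\circ\tau+w_{\tau}=0$. This gives $0=E(w_{\tau^{-1}})+E(w_{\tau})$, and $E\geq0$ forces both terms to vanish.

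\textbf{Converse.} You do not need existence of a maximizer. An equality case minimizes the nonnegative functional $E$, so it is a weak solution of the Euler--Lagrange equation. Since $e^{4W}\in L^{p}(\mathbb S^{4})$ for every $p$, a bootstrap makes $W$ smooth. Put $W'=W-\frac14\log\overline{e^{4W}}$ and $u=W'\circ S+\log\omega$. Then $\Delta^{2}u=6e^{4u}$, $\int e^{4u}<\infty$ and $u=-2\log|x|+O(1)$, so Lin's theorem applies.

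\textbf{Unneeded steps.} The center-of-mass normalization is unnecessary. The Chang--Yang improved inequality would not by itself force an extremal to be constant, because it carries an additive constant.
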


\begin{proof}
This is the four-dimensional endpoint inequality of Beckner \cite{Beckner}. The coefficient can also be checked on the first harmonic modes. The Paneitz eigenvalue on \(\mathcal H_{1}\) is \(24\). The second variation of the logarithm on the left-hand side of \eqref{eq:Beckner-Adams} is \(16|\mathbb S^{4}|^{-1}\|W\|_{2}^{2}\). Formula \eqref{eq:sphere-volume} gives equality of the two second variations on \(\mathcal H_{1}\). Beckner's equality classification gives \eqref{eq:Beckner-Adams-equality}.
\end{proof}

\begin{lemma}\label{lem:convolution-identity}
For \(0<\alpha<4\),
\begin{equation}\label{eq:conv-base}
\int_{\mathbb R^{4}}
\frac{1}{|x-y|^{\alpha}}
\frac{1}{(1+|y|^{2})^{\frac{8-\alpha}{2}}}\,dy
=
\frac{4\pi^{2}}{(4-\alpha)(6-\alpha)}
\frac{1}{(1+|x|^{2})^{\alpha/2}}.
\end{equation}
Consequently \eqref{eq:Riesz-bubble} holds.
\end{lemma}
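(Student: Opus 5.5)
The plan is to pull \eqref{eq:conv-base} back to the sphere via stereographic projection and then read it off from the Funk--Hecke formula with \(k=0\), i.e. Lemma~\ref{lem:Funk-Hecke}. Set \(\xi=S(x)\), \(\eta=S(y)\). By \eqref{eq:stereo-identities},
\[
|x-y|^{-\alpha}=\omega(x)^{\alpha/2}\omega(y)^{\alpha/2}|\xi-\eta|^{-\alpha},
\qquad dy=\omega(y)^{-4}\,d\eta .
\]
Moreover \((1+|y|^{2})^{-\frac{8-\alpha}{2}}=2^{-\frac{8-\alpha}{2}}\omega(y)^{\frac{8-\alpha}{2}}\). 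Collecting the powers of \(\omega(y)\) gives the exponent \(\frac{\alpha}{2}+\frac{8-\alpha}{2}-4=0\). This is exactly the weight cancellation \eqref{eq:HLS-weight-cancellation}, and it is the reason the exponent \((8-\alpha)/2\) appears. The left side of \eqref{eq:conv-base} therefore becomes
\[
2^{-\frac{8-\alpha}{2}}\omega(x)^{\alpha/2}\int_{\mathbb S^{4}}|\xi-\eta|^{-\alpha}\,d\eta
=2^{-\frac{8-\alpha}{2}}\omega(x)^{\alpha/2}\mu_{0}(\alpha).
\]

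Next I insert \(\mu_{0}(\alpha)=2^{6-\alpha}\pi^{2}/((6-\alpha)(4-\alpha))\) from \eqref{eq:mu0-mu1} and \(\omega(x)^{\alpha/2}=2^{\alpha/2}(1+|x|^{2})^{-\alpha/2}\). The powers of two combine to
\[
2^{-4+\frac{\alpha}{2}+6-\alpha+\frac{\alpha}{2}}=2^{2}=4,
\]
which yields \(4\pi^{2}/((4-\alpha)(6-\alpha))\,(1+|x|^{2})^{-\alpha/2}\), as claimed. Convergence of the integral is guaranteed by Lemma~\ref{lem:Riesz-estimate} with \(\theta=8-\alpha>4\), so the change of variables (Tonelli for positive integrands) is legitimate.

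For \eqref{eq:Riesz-bubble}, I first reduce to \(\mu=1\), \(\zeta=0\) by translation and dilation. Under \(y\mapsto\zeta+y/\mu\), the factor \(\mu^{\frac{8-\alpha}{2}}\) from \(e^{U_{\mu,\zeta}}\), the factor \(\mu^{-4}\) from the Jacobian and the factor \(\mu^{\alpha}\) from the kernel combine to \(\mu^{\alpha/2}\). The same power \(\mu^{\alpha/2}\) appears in \(e^{\frac{\alpha}{8-\alpha}U_{\mu,\zeta}}\), so the identity is scale covariant. For \(\mu=1\), \(\zeta=0\) we have \(e^{U}=C_{\alpha}^{\frac{8-\alpha}{2}}(1+|y|^{2})^{-\frac{8-\alpha}{2}}\), and \eqref{eq:conv-base} gives
\[
C_{\alpha}^{\frac{8-\alpha}{2}}\frac{4\pi^{2}}{(4-\alpha)(6-\alpha)}(1+|x|^{2})^{-\alpha/2}.
\]
On the other side, \(e^{\frac{\alpha}{8-\alpha}U}=C_{\alpha}^{\alpha/2}(1+|x|^{2})^{-\alpha/2}\), so the quotient of the two is
\[
C_{\alpha}^{4-\alpha}\frac{4\pi^{2}}{(4-\alpha)(6-\alpha)}.
\]
By the definition \eqref{eq:C-alpha}, \(C_{\alpha}^{8-\alpha}=12(8-\alpha)(6-\alpha)(4-\alpha)/\pi^{2}\), and substituting this turns the quotient into \(48(8-\alpha)C_{\alpha}^{-4}\), as required.

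The main obstacle is only bookkeeping of the exponents of \(2\) and \(C_{\alpha}\). The single structural point to check carefully is the cancellation of the \(\omega(y)\) weights, which requires exactly the exponent \((8-\alpha)/2\).
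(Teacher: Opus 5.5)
Your proof is correct, and the bookkeeping checks out. The \(\omega(y)\)-weights cancel since \(\frac{\alpha}{2}+\frac{8-\alpha}{2}=4\), and the powers of \(2\) combine to \(2^{2}\). The dilation produces \(\mu^{\alpha/2}\) on both sides of \eqref{eq:Riesz-bubble}, and \(C_{\alpha}^{4-\alpha}\cdot4\pi^{2}/((4-\alpha)(6-\alpha))=48(8-\alpha)C_{\alpha}^{-4}\) by \eqref{eq:C-alpha}.

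The paper proceeds a little differently. It asserts, citing conformal covariance or the HLS equality case, that the left-hand side has the form \(A_{\alpha}(1+|x|^{2})^{-\alpha/2}\). It then finds \(A_{\alpha}\) by evaluating at \(x=0\), where the integral is radial and equals \(2\pi^{2}\int_{0}^{\infty}r^{3-\alpha}(1+r^{2})^{-\frac{8-\alpha}{2}}\,dr=\pi^{2}B(2-\frac{\alpha}{2},2)\).

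Your stereographic computation makes the paper's first step explicit: after projection the integral is \(\int_{\mathbb S^{4}}|\xi-\eta|^{-\alpha}\,d\eta\), which is independent of \(\xi\) by rotation invariance. It therefore delivers the shape and the constant at once. The price is that the constant is imported from the value of \(\mu_{0}(\alpha)\) in Lemma~\ref{lem:Funk-Hecke}, which the paper only cites. The paper's evaluation at the origin is instead a self-contained one-variable beta integral, though in the end it is the same beta function.

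Your route is also exactly the \(\Phi\equiv1\) case of the change of variables the paper performs later in Proposition~\ref{prop:conformal-linearized}. This has a side benefit: it shows directly that the Euclidean constant and \(\mu_{0}(\alpha)\) are consistent, which is what underlies \(\kappa_{\alpha}\mu_{0}(\alpha)=3(8-\alpha)\) in \eqref{eq:kappa-identities}.
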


\begin{proof}
Conformal covariance, or the equality case in the Hardy--Littlewood--Sobolev inequality, shows that the left-hand side of \eqref{eq:conv-base} is \(A_{\alpha}(1+|x|^{2})^{-\alpha/2}\). At \(x=0\),
\[
A_{\alpha}=2\pi^{2}\int_{0}^{\infty}
\frac{r^{3-\alpha}}{(1+r^{2})^{\frac{8-\alpha}{2}}}\,dr
=\pi^{2}B\left(2-\frac{\alpha}{2},2\right)
=\frac{4\pi^{2}}{(4-\alpha)(6-\alpha)}.
\]
Substituting the definition of \(U_{\mu,\zeta}\) and using \eqref{eq:C-alpha} gives \eqref{eq:Riesz-bubble} after scaling and translation.
\end{proof}

\section{Classification of normal finite-mass solutions}

In this section we prove Theorem~\ref{thm:classification}. We first establish regularity, the logarithmic representation, and the exact mass identity in the normal distributional class.  

\begin{definition}
Let \(f\in L^{1}_{\operatorname{loc}}(\mathbb R^{4})\). A function \(u\in L^{1}_{\operatorname{loc}}(\mathbb R^{4})\) is called a distributional solution of
\[
\Delta^{2}u=f
\qquad\hbox{in }\mathbb R^{4}
\]
if
\[
\int_{\mathbb R^{4}}u(x)\Delta^{2}\varphi(x)\,dx
=
\int_{\mathbb R^{4}}f(x)\varphi(x)\,dx
\]
for every \(\varphi\in C_{c}^{\infty}(\mathbb R^{4})\).
\end{definition}

For a solution \(u\) of \eqref{eq:main}, define
\[
F(x):=
\left(\int_{\mathbb R^{4}}\frac{e^{u(y)}}{|x-y|^{\alpha}}\,dy\right)e^{u(x)}.
\]
A solution satisfying \eqref{eq:finite-mass} and \eqref{eq:normality} will be called a normal finite-mass solution.

\begin{lemma}\label{lem:classification-regularity}
Assume that \(u\in L^{1}_{\operatorname{loc}}(\mathbb R^{4})\) solves \eqref{eq:main} and satisfies \eqref{eq:finite-mass}. Then \(F\in L^{1}_{\operatorname{loc}}(\mathbb R^{4})\), and \(u\in C^{\infty}(\mathbb R^{4})\).
\end{lemma}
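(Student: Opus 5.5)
The plan is to first prove that \(F\) is globally integrable, using only the finite-mass condition \eqref{eq:finite-mass} and the Hardy--Littlewood--Sobolev inequality. We then bootstrap regularity through a Brezis--Merle type exponential estimate for the logarithmic potential.

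\textbf{Step 1: integrability of \(F\).} Set \(q_{\alpha}=\frac{8}{8-\alpha}\) as in \eqref{eq:q-alpha}. Condition \eqref{eq:finite-mass} says exactly that \(e^{u}\in L^{q_{\alpha}}(\mathbb R^{4})\). By the general form of Lemma~\ref{lem:HLS}, the Riesz potential \(I_{\alpha}(e^{u})\) lies in \(L^{s}(\mathbb R^{4})\), where
\[
\frac1s=\frac{1}{q_{\alpha}}-\frac{4-\alpha}{4}=\frac{\alpha}{8}.
\]
This is equivalent to the duality form of Lemma~\ref{lem:HLS} with \(p=q_{\alpha}\), \(r=s'\). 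Since \(\frac1{q_{\alpha}}+\frac1s=1\), H\"older's inequality gives
\[
\|F\|_{L^{1}(\mathbb R^{4})}\le \|I_{\alpha}(e^{u})\|_{L^{8/\alpha}}\|e^{u}\|_{L^{q_{\alpha}}}<\infty .
\]
So \(F\in L^{1}(\mathbb R^{4})\), which is stronger than \(L^{1}_{\rm loc}\). In particular \(I_{\alpha}(e^{u})\) is finite almost everywhere.

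\textbf{Step 2: local exponential integrability.} Fix \(x_{0}\) and a radius \(r>0\) so small that \(\int_{B_{2r}(x_{0})}F<\varepsilon\). Split \(u=w+h\) on \(B_{2r}(x_{0})\), where
\[
w(x)=-\frac{1}{8\pi^{2}}\int_{B_{2r}(x_{0})}\log|x-y|\,F(y)\,dy .
\]
Then \(\Delta^{2}h=0\) in \(\mathcal D'(B_{2r}(x_{0}))\), using the fundamental solution recorded in Section~2. Weyl's lemma for polyharmonic distributions makes \(h\) smooth, hence bounded on \(B_{r}(x_{0})\). For \(w\) we use the fourth-order Brezis--Merle inequality: Jensen's inequality applied to the probability measure \(F\,dy/\|F\|_{L^{1}(B_{2r})}\) yields
\[
\int_{B_{2r}}\exp\!\Bigl(\frac{(8\pi^{2}-\delta)|w|}{\|F\|_{L^{1}(B_{2r})}}\Bigr)\le C_{\delta,r}.
\]
Taking \(\varepsilon\) small gives \(e^{u}\in L^{p}(B_{r}(x_{0}))\) for every \(p<\infty\).

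\textbf{Step 3: bootstrap.} We decompose \(e^{u}=e^{u}\chi_{B_{2r}}+e^{u}\chi_{B_{2r}^{c}}\).

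\emph{Far part.} For \(|x-x_{0}|<r\), the kernel \(y\mapsto|x-y|^{-\alpha}\) and all its \(x\)-derivatives are bounded by \(C|y-x_{0}|^{-\alpha}\) on \(B_{2r}^{c}\), and this bound lies in \(L^{8/\alpha}(B_{2r}^{c})\) because \(|y|^{-8}\) is integrable at infinity. Paired against \(e^{u}\in L^{q_{\alpha}}\), this shows that the far part of \(I_{\alpha}(e^{u})\) is \(C^{\infty}\) on \(B_{r}(x_{0})\).

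\emph{Near part.} Since \(e^{u}\in L^{p}\) with \(p>4/(4-\alpha)\), the near part of \(I_{\alpha}(e^{u})\) is bounded and H\"older continuous.

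Hence \(F\in L^{p}(B_{r}(x_{0}))\) for all \(p\), and \(L^{p}\)-elliptic theory for \(\Delta^{2}\) applied to \(w\) gives \(u\in W^{4,p}_{\rm loc}\subset C^{3,\beta}_{\rm loc}\). We then iterate: \(e^{u}\in C^{k,\beta}_{\rm loc}\) implies, after a smooth cutoff and Lemma~\ref{lem:Riesz-regularity} for the near part, that \(I_{\alpha}(e^{u})\in C^{k,\beta}_{\rm loc}\). Therefore \(F\in C^{k,\beta}_{\rm loc}\), and Schauder estimates give \(u\in C^{k+4,\beta}_{\rm loc}\). Induction on \(k\) gives \(u\in C^{\infty}\).

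\textbf{Main obstacle.} The delicate point is Step 2. One must justify that \(u-w\) is biharmonic in the distributional sense with \(u\) only in \(L^{1}_{\rm loc}\); this requires \(w\in L^{1}_{\rm loc}\), which holds since \(F\in L^{1}\) and \(\log\) is locally integrable. One must also get the smallness constant right in the exponential estimate, where \(8\pi^{2}\) is the fourth-order analogue of Brezis--Merle's \(4\pi\). I would first try the direct Jensen/Fubini argument above rather than quoting a general result.
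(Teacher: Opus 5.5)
Your proposal is correct and follows essentially the same route as the paper: HLS to get $F\in L^{1}$, a small-local-mass Jensen estimate for the truncated logarithmic potential together with Weyl's lemma for the biharmonic remainder, then the near/far splitting of the Riesz potential and an $L^{p}$/Schauder bootstrap using Lemma~\ref{lem:Riesz-regularity}. Two small corrections. First, the fourth-order Brezis--Merle threshold is $32\pi^{2}$, not $8\pi^{2}$; your weaker constant still works because only small local mass is needed. Second, since your radius $r$ depends on $p$ and $x_{0}$, you need the finite covering step (as in the paper) to get $e^{u}\in L^{p}_{\mathrm{loc}}$ before using integrability on $B_{2r}(x_{0})$ in Step~3.
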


\begin{proof}
Set
\[
p_{\alpha}:=\frac{8}{8-\alpha},\qquad
f(x):=e^{u(x)}
\qquad
\mbox{and}\quad
V(x):=\int_{\mathbb R^{4}}\frac{f(y)}{|x-y|^{\alpha}}\,dy .
\]
Then the right-hand side of \eqref{eq:main} can be written as
\[
F(x)=V(x)f(x).
\]
By the finite-mass condition \eqref{eq:finite-mass}, we have
\[
f=e^{u}\in L^{p_{\alpha}}(\mathbb R^{4}).
\]
Since
\[
\frac{1}{p_{\alpha}}+\frac{1}{p_{\alpha}}+\frac{\alpha}{4}
=
\frac{8-\alpha}{8}+\frac{8-\alpha}{8}+\frac{\alpha}{4}
=2,
\]
the Hardy--Littlewood--Sobolev inequality gives
\[
\int_{\mathbb R^{4}}\int_{\mathbb R^{4}}
\frac{f(x)f(y)}{|x-y|^{\alpha}}\,dxdy
\leq
C
\|f\|_{L^{p_{\alpha}}(\mathbb R^{4})}^{2}
<+\infty .
\]
By Tonelli's theorem, this implies
\[
\int_{\mathbb R^{4}}F(x)\,dx
=
\int_{\mathbb R^{4}}
\left(
\int_{\mathbb R^{4}}\frac{f(y)}{|x-y|^{\alpha}}\,dy
\right)f(x)\,dx
<+\infty .
\]
Consequently \(F\geq0\) and \(F\in L^{1}(\mathbb R^{4})\), and in
particular \(F\in L^{1}_{\operatorname{loc}}(\mathbb R^{4})\).

It remains to prove the smoothness of \(u\). The preceding argument only gives \(F\in L^{1}\), which is not enough by itself to conclude smoothness from the local \(L^{q}\)-theory. We first prove a local improvement of the integrability of \(e^{u}\).

Fix \(x_{0}\in\mathbb R^{4}\) and \(s>1\). Since \(F\in L^{1}(\mathbb R^{4})\), we can choose \(r>0\) sufficiently small such that
\[
m_{r}:=\int_{B_{2r}(x_{0})}F(y)\,dy
\]
satisfies
\[
\frac{s\,m_{r}}{8\pi^{2}}<4.
\]
Define
\[
v_{r}(x):=
-\frac{1}{8\pi^{2}}
\int_{B_{2r}(x_{0})}\log |x-y|\,F(y)\,dy .
\]
Since
\[
\Delta^{2}\left(-\frac{1}{8\pi^{2}}\log |x|\right)=\delta_{0}
\qquad \hbox{in } \mathcal D'(\mathbb R^{4}),
\]
we have
\[
\Delta^{2}v_{r}=F
\qquad \hbox{in } B_{2r}(x_{0})
\]
in the sense of distributions. Therefore
\[
h_{r}:=u-v_{r}
\]
satisfies
\[
\Delta^{2}h_{r}=0
\qquad \hbox{in } B_{2r}(x_{0}).
\]
Since \(u\in L^{1}_{\operatorname{loc}}(\mathbb R^{4})\) and \(v_{r}\in L^{1}_{\operatorname{loc}}(B_{2r}(x_{0}))\), we have \(h_{r}\in L^{1}_{\operatorname{loc}}(B_{2r}(x_{0}))\). By Weyl's lemma for the biharmonic operator,
\[
h_{r}\in C^{\infty}(B_{2r}(x_{0})).
\]
In particular,
\[
\sup_{B_{r}(x_{0})}|h_{r}|<+\infty .
\]

For \(x\in B_{r}(x_{0})\) and \(y\in B_{2r}(x_{0})\), one has \(|x-y|\leq 3r\). Hence
\[
-\log |x-y|
=
\log\frac{4r}{|x-y|}-\log(4r),
\]
and therefore
\[
v_{r}(x)
\leq
C
+
\frac{1}{8\pi^{2}}
\int_{B_{2r}(x_{0})}
\log\frac{4r}{|x-y|}\,F(y)\,dy ,
\]
where \(C\) depends on \(r\), \(s\), and \(m_{r}\). If \(m_{r}=0\), then \(v_{r}\) is constant in \(B_{2r}(x_{0})\), and the desired local integrability is immediate. Assume now that \(m_{r}>0\). Put
\[
d\nu(y):=\frac{F(y)}{m_{r}}\,dy
\qquad \hbox{on } B_{2r}(x_{0}).
\]
Then \(\nu\) is a probability measure. By Jensen's inequality, for \(x\in B_{r}(x_{0})\),
\[
\begin{aligned}
\exp\left(
\frac{s}{8\pi^{2}}
\int_{B_{2r}(x_{0})}
\log\frac{4r}{|x-y|}\,F(y)\,dy
\right)
&=
\exp\left(
\frac{s\,m_{r}}{8\pi^{2}}
\int_{B_{2r}(x_{0})}
\log\frac{4r}{|x-y|}\,d\nu(y)
\right)  \\
&\leq
\int_{B_{2r}(x_{0})}
\left(\frac{4r}{|x-y|}\right)^{\frac{s\,m_{r}}{8\pi^{2}}}
\,d\nu(y).
\end{aligned}
\]
Since
$
\frac{s\,m_{r}}{8\pi^{2}}<4,
$
we obtain
\[
\begin{aligned}
\int_{B_{r}(x_{0})}e^{s u(x)}\,dx
&\leq
C
\int_{B_{r}(x_{0})}e^{s v_{r}(x)}\,dx  \leq
C
\int_{B_{2r}(x_{0})}
\int_{B_{r}(x_{0})}
|x-y|^{-\frac{s\,m_{r}}{8\pi^{2}}}
\,dx\,d\nu(y)
<+\infty .
\end{aligned}
\]
Thus for every \(x_{0}\in\mathbb R^{4}\) and every \(s>1\), there exists \(r>0\) such that
\[
e^{u}\in L^{s}(B_{r}(x_{0})).
\]
By a finite covering argument, we conclude that
\[
e^{u}\in L^{s}_{\operatorname{loc}}(\mathbb R^{4})
\qquad \hbox{for every } 1<s<+\infty .
\]

We next show that the nonlocal potential \(V\) is locally bounded. Let \(K\Subset\mathbb R^{4}\). Choose \(R>1\) such that
$
K\subset B_{R}(0).
$
For \(x\in K\), decompose
\[
V(x)
=
\int_{B_{2R}(0)}
\frac{e^{u(y)}}{|x-y|^{\alpha}}\,dy
+
\int_{\mathbb R^{4}\setminus B_{2R}(0)}
\frac{e^{u(y)}}{|x-y|^{\alpha}}\,dy
=:V_{1}(x)+V_{2}(x).
\]
Choose \(s>4/(4-\alpha)\). Then its conjugate exponent \(s'=s/(s-1)\) satisfies
$
\alpha s'<4.
$
Using H\"{o}lder's inequality and the local integrability just proved, we get
\[
\begin{aligned}
V_{1}(x)
&\leq
\left(
\int_{B_{2R}(0)}e^{su(y)}\,dy
\right)^{1/s}
\left(
\int_{B_{2R}(0)}|x-y|^{-\alpha s'}\,dy
\right)^{1/s'} \leq C_{K,R}.
\end{aligned}
\]
For \(V_{2}\), since \(x\in B_{R}(0)\) and \(y\in\mathbb R^{4}\setminus B_{2R}(0)\), one has
$
|x-y|\geq \frac{|y|}{2}.
$
Let \(p_{\alpha}'=8/\alpha\) be the conjugate exponent of \(p_{\alpha}=8/(8-\alpha)\). Then
$
\alpha p_{\alpha}'=8>4.
$
Again by H\"{o}lder's inequality,
\[
\begin{aligned}
V_{2}(x)
&\leq
C
\left(
\int_{\mathbb R^{4}}e^{p_{\alpha}u(y)}\,dy
\right)^{1/p_{\alpha}}
\left(
\int_{\mathbb R^{4}\setminus B_{2R}(0)}
|y|^{-\alpha p_{\alpha}'}\,dy
\right)^{1/p_{\alpha}'}
<+\infty .
\end{aligned}
\]
Therefore
$
V\in L^{\infty}_{\operatorname{loc}}(\mathbb R^{4}).
$

Since \(e^{u}\in L^{s}_{\operatorname{loc}}(\mathbb R^{4})\) for every \(s>1\), it follows that
\[
F=Ve^{u}\in L^{s}_{\operatorname{loc}}(\mathbb R^{4})
\qquad \hbox{for every } 1<s<+\infty .
\]

We now apply the local regularity theory for the biharmonic operator; see, for instance, \cite{GazzolaGrunauSweers}. Since
\[
\Delta^{2}u=F
\qquad \hbox{in } \mathbb R^{4}
\]
in the sense of distributions, and \(F\in L^{s}_{\operatorname{loc}}(\mathbb R^{4})\) for every \(s>1\), the interior \(L^{s}\)-estimates for \(\Delta^{2}\) imply
\[
u\in W^{4,s}_{\operatorname{loc}}(\mathbb R^{4})
\qquad \hbox{for every } 1<s<+\infty .
\]
Taking \(s>4\), the Sobolev embedding theorem gives
\[
u\in C^{3,\gamma}_{\operatorname{loc}}(\mathbb R^{4})
\qquad \hbox{for some } \gamma\in(0,1).
\]
Consequently,
\[
e^{u}\in C^{3,\gamma}_{\operatorname{loc}}(\mathbb R^{4}).
\]

We finally bootstrap. Let \(K\Subset\mathbb R^{4}\), and choose \(\eta\in C_{c}^{\infty}(\mathbb R^{4})\) such that \(\eta\equiv1\) in a neighbourhood of \(K\). We write
\[
V(x)
=
\int_{\mathbb R^{4}}
\frac{\eta(y)e^{u(y)}}{|x-y|^{\alpha}}\,dy
+
\int_{\mathbb R^{4}}
\frac{(1-\eta(y))e^{u(y)}}{|x-y|^{\alpha}}\,dy
=:V_{\operatorname{loc}}(x)+V_{\operatorname{far}}(x).
\]
The far part \(V_{\operatorname{far}}\) is \(C^{\infty}\) in a
neighbourhood of \(K\). Indeed, every derivative of the kernel is smooth on
the separated supports, and its tail is integrable against \(e^{u}\) by
H\"older's inequality and \(e^{u}\in L^{p_{\alpha}}(\mathbb R^{4})\).
The local part \(V_{\operatorname{loc}}\) is the Riesz potential of a
compactly supported \(C^{3,\gamma}\)-function; Lemma
\ref{lem:Riesz-regularity} gives
\[
V_{\operatorname{loc}}\in C^{3,\gamma}_{\operatorname{loc}}(\mathbb R^{4}).
\]
Thus
\[
V\in C^{3,\gamma}_{\operatorname{loc}}(\mathbb R^{4}),
\qquad
F=Ve^{u}\in C^{3,\gamma}_{\operatorname{loc}}(\mathbb R^{4}).
\]
The Schauder estimates for the biharmonic equation yield
\[
u\in C^{7,\gamma}_{\operatorname{loc}}(\mathbb R^{4}).
\]
Repeating the same argument, if \(u\in C^{k,\gamma}_{\operatorname{loc}}(\mathbb R^{4})\), then \(e^{u}\in C^{k,\gamma}_{\operatorname{loc}}(\mathbb R^{4})\), the Riesz potential \(V\in C^{k,\gamma}_{\operatorname{loc}}(\mathbb R^{4})\), and hence
\[
F=Ve^{u}\in C^{k,\gamma}_{\operatorname{loc}}(\mathbb R^{4}).
\]
Applying the Schauder estimates again gives
\[
u\in C^{k+4,\gamma}_{\operatorname{loc}}(\mathbb R^{4}).
\]
By induction,
$
u\in C^{\infty}(\mathbb R^{4}).
$
This completes the proof.
\end{proof}

We next derive the logarithmic representation formula. The following proposition only identifies the biharmonic remainder; no classification of this remainder is claimed without the normality assumption.

\begin{proposition}\label{prop:nonlinear-representation}
Let \(u\) be as in Lemma~\ref{lem:classification-regularity}. Then there exists an entire biharmonic function \(P\) satisfying
\[
\Delta^{2}P=0 \qquad \hbox{in } \mathbb R^{4},
\]
such that
\begin{equation}\label{eq:nonlinear-rep-normalized}
u(x)
=
-\frac{1}{8\pi^{2}}
\int_{\mathbb R^{4}}
\log\frac{|x-y|}{1+|y|}\,F(y)\,dy
+P(x).
\end{equation}
In particular, if the logarithmic moment
\[
\int_{\mathbb R^{4}}\log(1+|y|)F(y)\,dy
\]
is finite, then \eqref{eq:nonlinear-rep-normalized} can be rewritten as
\begin{equation}\label{eq:nonlinear-rep}
u(x)
=
-\frac{1}{8\pi^{2}}
\int_{\mathbb R^{4}}\log|x-y|F(y)\,dy
+\widetilde P(x),
\end{equation}
where \(\widetilde P\) is again an entire biharmonic function. The normality assumption \eqref{eq:normality} is precisely the requirement that this biharmonic remainder be constant.
\end{proposition}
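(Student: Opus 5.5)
The plan is to build the normalized logarithmic potential of \(F\) directly and show that it is a well-defined \(L^{1}_{\operatorname{loc}}\) solution of \(\Delta^{2}w=F\). By Lemma~\ref{lem:classification-regularity}, \(F\ge0\), \(F\in L^{1}(\mathbb R^{4})\) and \(u\in C^{\infty}\). Define
\[
w(x):=-\frac{1}{8\pi^{2}}\int_{\mathbb R^{4}}\log\frac{|x-y|}{1+|y|}\,F(y)\,dy .
\]
Then \(P:=u-w\) satisfies \(\Delta^{2}P=0\) in \(\mathcal D'(\mathbb R^{4})\). Weyl's lemma for \(\Delta^{2}\), already used in the proof of Lemma~\ref{lem:classification-regularity}, then makes \(P\) smooth and entire biharmonic. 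So the whole proof reduces to the two properties of \(w\).

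\emph{Local integrability.} Fix \(R>0\), \(x\in B_{R}\), and split the \(y\)-integral at \(|y|=2R\).
\begin{itemize}
\item For \(|y|\ge2R\), we have \(|x-y|/(1+|y|)\in[1/4,\,2]\) for all \(x\in B_{R}\), \(R\ge1\). Hence the kernel is bounded and this part is bounded on \(B_{R}\) by \(C\|F\|_{1}\).
\item For \(|y|<2R\), the kernel is \(\log|x-y|-\log(1+|y|)\). The second term is bounded. The first is the convolution of \(F\chi_{B_{2R}}\in L^{1}\) with \(\log|\cdot|\in L^{1}_{\operatorname{loc}}\), so by Tonelli it lies in \(L^{1}(B_{R})\).
\end{itemize}
Thus \(w\in L^{1}_{\operatorname{loc}}\), and the integral defining \(w\) converges absolutely for a.e.\ \(x\).

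\emph{The equation \(\Delta^{2}w=F\).} Take \(\varphi\in C_{c}^{\infty}(\mathbb R^{4})\) and apply Fubini, which is justified by the absolute bounds just obtained:
\[
\int w\,\Delta^{2}\varphi\,dx
=\int F(y)\left(-\frac{1}{8\pi^{2}}\int\bigl(\log|x-y|-\log(1+|y|)\bigr)\Delta^{2}\varphi(x)\,dx\right)dy .
\]
The \(\log(1+|y|)\) term integrates to zero, since \(\int\Delta^{2}\varphi=0\). The remaining inner integral equals \(\varphi(y)\) by the fundamental solution identity. Hence \(\int w\,\Delta^{2}\varphi=\int F\varphi\).

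\emph{The log-moment case.} If \(\int\log(1+|y|)F\,dy<\infty\), write
\[
-\frac{1}{8\pi^{2}}\int\log\frac{|x-y|}{1+|y|}\,F(y)\,dy
=-\frac{1}{8\pi^{2}}\int\log|x-y|\,F(y)\,dy+\frac{1}{8\pi^{2}}\int\log(1+|y|)\,F(y)\,dy .
\]
The second term is a finite constant, so setting \(\widetilde P:=P+\) this constant gives \eqref{eq:nonlinear-rep}. Finally, \eqref{eq:normality} says exactly that \(u\) plus the logarithmic potential is constant, i.e.\ that \(\widetilde P\) is constant.

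The main point needing care is the Fubini justification. It is needed for \(|\log|x-y|-\log(1+|y|)|\,|\Delta^{2}\varphi(x)|F(y)\) on \(\operatorname{supp}\varphi\times\mathbb R^{4}\). This follows from the same two-region split: a bounded kernel for large \(|y|\), and local integrability of \(\log\) for small \(|y|\).
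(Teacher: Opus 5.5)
Your proof is correct and follows the paper's argument essentially step for step. Both use the same normalized logarithmic potential, the same Fubini computation with \(\int\Delta^{2}\varphi=0\) and the fundamental solution, Weyl's lemma for \(P=u-w\), and absorption of the finite log-moment into \(\widetilde P\). The one difference is in how well-definedness of the potential is shown. You prove only \(w\in L^{1}_{\operatorname{loc}}\), finite a.e., from \(F\in L^{1}\); the paper instead uses \(F\in L^{q}_{\operatorname{loc}}\), \(q>1\), from the proof of Lemma~\ref{lem:classification-regularity} to make the integral finite at every \(x\). You should quote that local integrability (or local boundedness of \(F\)) to get \eqref{eq:nonlinear-rep-normalized} pointwise rather than only almost everywhere.
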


\begin{proof}
By Lemma~\ref{lem:classification-regularity},
\[
F(x)=
\left(
\int_{\mathbb R^{4}}\frac{e^{u(y)}}{|x-y|^{\alpha}}\,dy
\right)e^{u(x)}
\]
belongs to \(L^{1}(\mathbb R^{4})\). Define
\[
v(x):=
-\frac{1}{8\pi^{2}}
\int_{\mathbb R^{4}}
\log\frac{|x-y|}{1+|y|}\,F(y)\,dy .
\]
We first check that \(v\) is well-defined for every fixed \(x\in\mathbb R^{4}\). Let \(R_{x}:=2(1+|x|)\). We split
\[
\mathbb R^{4}
=
B_{1}(x)
\cup
\bigl(B_{R_{x}}(0)\setminus B_{1}(x)\bigr)
\cup
\bigl(\mathbb R^{4}\setminus B_{R_{x}}(0)\bigr).
\]
On \(B_{1}(x)\), the logarithmic singularity is locally integrable. Since \(F\in L^{q}_{\operatorname{loc}}(\mathbb R^{4})\) for some \(q>1\), as follows from the proof of Lemma~\ref{lem:classification-regularity}, H\"older's inequality gives
\[
\int_{B_{1}(x)}|\log |x-y||F(y)\,dy<+\infty.
\]
On \(B_{R_{x}}(0)\setminus B_{1}(x)\), the logarithmic factor is bounded and \(F\in L^{1}_{\operatorname{loc}}(\mathbb R^{4})\). Finally, if \(|y|>R_x\), then
\[
\frac{|y|}{2}\leq |x-y|\leq \frac32 |y|,
\]
and therefore
\[
\left|\log\frac{|x-y|}{1+|y|}\right|\leq C.
\]
Since \(F\in L^{1}(\mathbb R^{4})\), the integral defining \(v\) is finite.

We next prove that
\[
\Delta^{2}v=F
\qquad \hbox{in }\mathcal D'(\mathbb R^{4}).
\]
Let \(\varphi\in C_{c}^{\infty}(\mathbb R^{4})\). Since
\[
\Delta^{2}\left(-\frac{1}{8\pi^{2}}\log|x|\right)=\delta_{0}
\qquad \hbox{in }\mathcal D'(\mathbb R^{4})
\]
and
\[
\int_{\mathbb R^{4}}\Delta^{2}\varphi(x)\,dx=0,
\]
Fubini's theorem gives
\[
\begin{aligned}
\int_{\mathbb R^{4}}v(x)\Delta^{2}\varphi(x)\,dx
&=
-\frac{1}{8\pi^{2}}
\int_{\mathbb R^{4}}F(y)
\int_{\mathbb R^{4}}
\log\frac{|x-y|}{1+|y|}
\Delta^{2}\varphi(x)\,dxdy  \\
&=
-\frac{1}{8\pi^{2}}
\int_{\mathbb R^{4}}F(y)
\int_{\mathbb R^{4}}\log|x-y|\Delta^{2}\varphi(x)\,dxdy  \\
&=
\int_{\mathbb R^{4}}F(y)\varphi(y)\,dy .
\end{aligned}
\]
Thus \(\Delta^{2}v=F\). Since \(u\) is a distributional solution of \(\Delta^{2}u=F\), the difference
\[
P:=u-v
\]
satisfies \(\Delta^{2}P=0\) in \(\mathbb R^{4}\). Weyl's lemma for the biharmonic operator yields \(P\in C^{\infty}(\mathbb R^{4})\).

If \(\int_{\mathbb R^{4}}\log(1+|y|)F(y)\,dy<+\infty\), then
\[
\begin{aligned}
u(x)
&=
-\frac{1}{8\pi^{2}}
\int_{\mathbb R^{4}}\log|x-y|F(y)\,dy
+
\frac{1}{8\pi^{2}}
\int_{\mathbb R^{4}}\log(1+|y|)F(y)\,dy
+P(x) \\
&=
-\frac{1}{8\pi^{2}}
\int_{\mathbb R^{4}}\log|x-y|F(y)\,dy
+\widetilde P(x),
\end{aligned}
\]
where
\[
\widetilde P(x):=
P(x)+
\frac{1}{8\pi^{2}}
\int_{\mathbb R^{4}}\log(1+|y|)F(y)\,dy.
\]
Then \(\Delta^{2}\widetilde P=0\). Finally, the normality condition \eqref{eq:normality} says exactly that in this unnormalized representation the entire biharmonic remainder is a constant. This completes the proof.
\end{proof}

\begin{remark}
For fourth-order and higher-order Liouville-type equations, finite volume or finite mass alone may allow a nonconstant polynomial remainder in the logarithmic representation. In the present paper we do not use, and do not need, a general theorem asserting that the finite-mass condition by itself forces the remainder \(P\) in Proposition~\ref{prop:nonlinear-representation} to be a polynomial of degree at most two. The classification theorem is deliberately stated for normal finite-mass solutions, for which the biharmonic remainder in \eqref{eq:nonlinear-rep} is assumed to be constant. This is the precise point at which the normality assumption enters the proof.
\end{remark}

For the remainder of this section, unless otherwise stated, \(u\) denotes a normal finite-mass solution. Put
\[
M:=\int_{\mathbb R^{4}}F(x)\,dx,
\qquad
\gamma:=\frac{M}{8\pi^{2}}.
\]
The next lemma supplies the precise far-field information used in the Pohozaev identity. Its proof keeps the moving near-field contribution separate until a uniform local estimate has been obtained.

\begin{lemma}\label{lem:normal-asymptotics}
Let \(u\) be a normal finite-mass solution of \eqref{eq:main}. Then
\begin{equation}\label{eq:strict-mass-lower}
\frac{p_{\alpha}M}{8\pi^{2}}>4,
\qquad p_{\alpha}:=\frac{8}{8-\alpha}.
\end{equation}
There exists \(\varepsilon_{1}>0\) such that
\begin{equation}\label{eq:F-tail-decay}
F(x)\leq C(1+|x|)^{-4-\varepsilon_{1}}
\qquad\hbox{for }|x|\geq1.
\end{equation}
Moreover, as \(|x|\to+\infty\),
\begin{equation}\label{eq:u-asymptotic}
u(x)=-\frac{M}{8\pi^{2}}\log|x|+O(1),
\end{equation}
\begin{equation}\label{eq:du-asymptotic}
\partial_{r}u(x)=-\frac{M}{8\pi^{2}}\frac1{|x|}+o(|x|^{-1}),
\end{equation}
\begin{equation}\label{eq:drr-u-asymptotic}
\partial_{rr}u(x)=\frac{M}{8\pi^{2}}\frac1{|x|^{2}}+o(|x|^{-2}),
\end{equation}
\begin{equation*}
\Delta u(x)=-\frac{M}{4\pi^{2}}\frac1{|x|^{2}}+o(|x|^{-2}),
\end{equation*}
and
\begin{equation}\label{eq:dr-lap-u-asymptotic}
\partial_{r}\Delta u(x)=\frac{M}{2\pi^{2}}\frac1{|x|^{3}}+o(|x|^{-3}).
\end{equation}
All the remainder estimates in \eqref{eq:du-asymptotic}--\eqref{eq:dr-lap-u-asymptotic} are uniform with respect to the angular variable \(x/|x|\).
\end{lemma}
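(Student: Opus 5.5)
The plan follows the standard Brezis--Merle / Chen--Li route for Liouville equations, adapted to the logarithmic representation \eqref{eq:normality} and the nonlocal potential \(V\). Write
\[
u(x)=-\frac{1}{8\pi^{2}}\int\log|x-y|F(y)\,dy+C .
\]

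\textbf{Step 1: a crude lower bound.} For \(|x|\ge 2\), split \(\mathbb R^{4}\) into \(\{|y|\le |x|/2\}\), \(\{|x-y|\le |x|/2\}\) and the rest. The near-diagonal piece contributes a nonnegative amount to \(u\) only when \(|x-y|<1\), so the usual argument gives
\[
u(x)\ge -\frac{M}{8\pi^{2}}\log|x|-C .
\]
Inserting this into \eqref{eq:finite-mass} yields \(\int |x|^{-p_\alpha M/(8\pi^2)}\,dx<\infty\), hence \(p_\alpha M/(8\pi^2)\ge 4\).

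\textbf{Step 2: strict inequality \eqref{eq:strict-mass-lower}.} I expect this to be the main obstacle. I will argue by contradiction, assuming \(p_\alpha M=32\pi^{2}\). First I upgrade the lower bound to \(u\ge -\gamma\log|x|-C\) using \(F\ge0\) and the finite logarithmic moment. The latter comes from \(\int\log(1+|y|)F<\infty\), which follows from finiteness of the normal integral at \(x=0\) together with \(F\in L^{1}\). Under the equality assumption, \(e^{p_\alpha u}\ge c|x|^{-4}\) at infinity, which is not integrable, contradicting \eqref{eq:finite-mass}. The delicate point is controlling the error term \(\int_{|y|\ge|x|/2}\log\frac{|x|}{|x-y|}F\) from above. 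Splitting at \(|x-y|\ge1\) handles most of it, since there the term is bounded by \(\log|x|\int_{|y|>|x|/2}F=o(\log|x|)\). What remains is a term that is \(\ge\) its negative part, so the lower bound \(u\ge-\gamma\log|x|-o(\log|x|)\) needs sharpening to \(O(1)\). For that sharpening I will use the logarithmic moment: \(\int_{|y|>|x|/2}\log|y|\,F(y)\,dy\to0\), so the far contribution is \(o(1)\) uniformly. This gives \(u\ge-\gamma\log|x|-C\) and hence the contradiction.

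\textbf{Step 3: decay of \(V\) and \(F\).} With \(\gamma p_\alpha>4\), the upper bound \(u\le -\gamma\log|x|+C\) holds off a small exceptional set. To make it pointwise, I will use a Brezis--Merle-type local estimate on unit balls \(B_1(x)\). Since \(\int_{B_2(x)}F\to0\), Jensen's inequality gives \(e^{su}\) locally bounded uniformly in \(L^{s}\). Then Lemma~\ref{lem:Riesz-estimate} with \(\theta=\alpha\gamma\cdot\frac{1}{?}\)—concretely, with \(e^{u}\lesssim (1+|y|)^{-\gamma}\) and \(\gamma>(8-\alpha)/2\)—gives \(V(x)\lesssim (1+|x|)^{-\alpha}\) up to a logarithm. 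Hence \(F\lesssim |x|^{-\alpha-\gamma}\). Since \(\alpha+\gamma>\alpha+(8-\alpha)/2=4+\alpha/2\), this yields \eqref{eq:F-tail-decay} with \(\varepsilon_1=\alpha/2\) minus any small loss. The moving near-field term is kept separate throughout: its \(L^{\infty}\) bound on \(B_1(x)\) comes from \(F\in L^{q}(B_2(x))\) uniformly, which holds once \(e^{u}\) is uniformly in \(L^{s}\) locally and \(V\) is uniformly bounded.

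\textbf{Step 4: asymptotics \eqref{eq:u-asymptotic}--\eqref{eq:dr-lap-u-asymptotic}.} Differentiate the representation under the integral. For \(k=1,2,3\), the derivative of \(\log|x-y|\) is compared with that of \(\log|x|\) in the region \(|y|\le|x|/2\), with error \(O(|y|/|x|^{k+1})\). The decay \(F\lesssim|y|^{-4-\varepsilon_1}\) makes \(\int_{|y|\le |x|/2}|y|F=o(|x|)\), and \(\int_{|y|>|x|/2}F=O(|x|^{-\varepsilon_1})\). The near-diagonal region \(|x-y|<|x|/2\) is bounded by \(|x|^{-4-\varepsilon_1}\int_{B_{|x|/2}}|z|^{-k}\,dz=o(|x|^{-k})\) for \(k\le3\). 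Finally, the explicit derivatives of \(-\gamma\log|x|\) give the stated leading coefficients: \(-\gamma/r\), \(\gamma/r^{2}\), \(\Delta\log|x|=2/|x|^{2}\), and \(\partial_r(2/r^2)=-4/r^3\), which with the sign \(-\gamma\) gives \(4\gamma/r^3=M/(2\pi^2r^3)\). All these estimates are uniform in angle.
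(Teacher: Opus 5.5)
Your outline follows the paper's route. The logarithmic moment comes from finiteness of the normal integral. A Jensen (Brezis--Merle) estimate on moving balls, with the near field kept separate, gives the pointwise upper bound. Lemma~\ref{lem:Riesz-estimate} gives the decay of $F$, and the asymptotics come from differentiating the representation. Three of your choices are harmless variants: the sharp bound $u\le-\gamma\log|x|+C$ via the moment (the paper settles for $-(\gamma-\varepsilon)\log|x|$), H\"older on the near field instead of a $W^{4,s}$ estimate, and the mean-value comparison in Step~4.

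\textbf{The false claim in Step~3.} You assert that $V(x)\le C(1+|x|)^{-\alpha}$ up to a logarithm. Lemma~\ref{lem:Riesz-estimate} with $\theta=\gamma$ gives this only when $\gamma\ge4$. At this stage you know only $\gamma>(8-\alpha)/2=4-\alpha/2$. The value $\gamma=8-\alpha$ comes from the Pohozaev identity, whose proof uses the present lemma, so you cannot invoke it here.

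For $(8-\alpha)/2<\gamma<4$, your own lower bound gives $e^{u}\ge c(1+|x|)^{-\gamma}$, which is not integrable. Then $V(x)\ge c|x|^{-\alpha}\int_{|y|<|x|}e^{u}\ge c'|x|^{4-\alpha-\gamma}$, which decays strictly slower than $|x|^{-\alpha}$.

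The conclusion \eqref{eq:F-tail-decay} survives. The correct case of the lemma gives $F\le C(1+|x|)^{4-\alpha-2\gamma}$, and $4-\alpha-2\gamma<-4$ precisely because $2\gamma>8-\alpha$. But then $\varepsilon_1=2\gamma-(8-\alpha)$, which can be arbitrarily small, not roughly $\alpha/2$. You need the three cases $\gamma<4$, $\gamma=4$, $\gamma>4$, as in the paper. Step~4 is unaffected, since it works for any $\varepsilon_1>0$.

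\textbf{Steps 1--2 need straightening.} Since $\int_{|x|\ge2}|x|^{-s}\,dx<\infty$ exactly when $s>4$, a bound $u\ge-\gamma\log|x|-C$ gives the strict inequality \eqref{eq:strict-mass-lower} directly. The contradiction argument is therefore superfluous.

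Conversely, the constant error you claim in Step~1 is not available before the logarithmic moment is known. On $\{|y|>|x|/2,\ |x-y|>|x|/2\}$ the kernel $\log|x-y|$ is comparable to $\log|y|$, not to $\log|x|$.

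The splitting at $|x-y|\ge1$ in Step~2 bounds $\int\log\frac{|x|}{|x-y|}F$ from above. That is what the upper bound on the far part $h_x$ in Step~3 needs, together with $\int_{|y|>|x|/2}\log(2+|y|)F\to0$; it does not give the lower bound. For the lower bound, the one-line argument is $|x-y|\le(1+|x|)(1+|y|)$ combined with $\int\log(1+|y|)F<\infty$.
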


\begin{proof}
By normality,
\begin{equation}\label{eq:normal-representation-asymptotic}
u(x)=-\frac{1}{8\pi^{2}}
\int_{\mathbb R^{4}}\log|x-y|F(y)\,dy+C.
\end{equation}
We first record the logarithmic moment. Fix \(x_{0}\in\mathbb R^{4}\). On a neighbourhood of \(x_{0}\), the negative part of \(\log|x_{0}-y|\) is integrable against \(F\), because \(F\in L^{q}_{\operatorname{loc}}\) for every finite \(q>1\). For \(|y|\) large,
\(\log|x_{0}-y|\geq\log(2+|y|)-C\). Since the integral in \eqref{eq:normal-representation-asymptotic} is finite, it follows that
\begin{equation}\label{eq:logarithmic-moment}
\int_{\mathbb R^{4}}\log(2+|y|)F(y)\,dy<+\infty.
\end{equation}
The elementary bound
\[
|x-y|\leq(1+|x|)(1+|y|)
\]
and \eqref{eq:logarithmic-moment} give
\begin{equation}\label{eq:lower-u-preliminary}
u(x)\geq-\gamma\log(1+|x|)-C.
\end{equation}
If \(p_{\alpha}\gamma\leq4\), then \eqref{eq:lower-u-preliminary} would imply
\(e^{p_{\alpha}u(x)}\geq C(1+|x|)^{-p_{\alpha}\gamma}\), which is not integrable in \(\mathbb R^{4}\). This contradicts \eqref{eq:finite-mass} and proves \eqref{eq:strict-mass-lower}.

We next obtain an almost sharp upper bound without assuming uniform local control of \(F\) at infinity. Fix
\begin{equation}\label{eq:epsilon-choice-asymptotic}
0<\varepsilon<\gamma-\frac{8-\alpha}{2}.
\end{equation}
Choose \(s>\max\{1,4/(4-\alpha)\}\). Since \(F\in L^{1}(\mathbb R^{4})\), one can choose \(R>1\) such that, with
\[
m_{R}:=\int_{\mathbb R^{4}\setminus B_{R}}F(y)\,dy,
\qquad
\gamma_{R}:=\frac{1}{8\pi^{2}}\int_{B_{R}}F(y)\,dy,
\]
we have
\begin{equation}\label{eq:tail-small-choice}
\gamma_{R}>\gamma-\varepsilon,
\qquad
\frac{s m_{R}}{8\pi^{2}}<4.
\end{equation}
Let \(|x|>2R+8\). Define
\[
v_{x}(z):=-\frac{1}{8\pi^{2}}
\int_{B_{4}(x)}\log|z-y|F(y)\,dy,
\qquad
h_{x}:=u-v_{x}.
\]
Then \(\Delta^{2}h_{x}=0\) in \(B_{4}(x)\). For \(z\in B_{2}(x)\), the set \(B_{R}\) is disjoint from \(B_{4}(x)\), and
\[
\log|z-y|=\log|x|+O_{R}(1)
\qquad\hbox{uniformly for }y\in B_{R}.
\]
If \(y\notin B_{4}(x)\), then \(|z-y|\geq2\). The contribution of
\(\mathbb R^{4}\setminus(B_{R}\cup B_{4}(x))\) to \(h_x(z)\) is
nonpositive. The fixed additive constant in \eqref{eq:normal-representation-asymptotic}
and the bounded error on \(B_R\) can be absorbed into \(C_R\). Hence
\begin{equation}\label{eq:hx-upper}
h_{x}(z)\leq-\gamma_{R}\log|x|+C_{R}
\qquad\hbox{for }z\in B_{2}(x).
\end{equation}

Put \(m_{x}:=\int_{B_{4}(x)}F\leq m_{R}\). If \(m_x>0\), let
\(d\nu_x=F(y)m_x^{-1}dy\) on \(B_4(x)\). For \(z\in B_2(x)\), Jensen's inequality yields
\[
\begin{aligned}
e^{s v_x(z)}
&\leq
\exp\left(\frac{s m_x}{8\pi^{2}}
\int_{B_4(x)}\log\frac{6}{|z-y|}\,d\nu_x(y)\right)\leq
\int_{B_4(x)}
\left(\frac{6}{|z-y|}\right)^{s m_x/(8\pi^{2})}
\,d\nu_x(y).
\end{aligned}
\]
The same conclusion is immediate if \(m_x=0\). Integrating in \(z\), using \eqref{eq:tail-small-choice}, and then using \eqref{eq:hx-upper}, we obtain
\begin{equation}\label{eq:uniform-moving-exponential}
\int_{B_{2}(x)}e^{s u(z)}\,dz
\leq C_{R}|x|^{-s\gamma_{R}}.
\end{equation}

For \(z\in B_{1}(x)\), split the Riesz potential as
\[
V(z)=\int_{B_{2}(x)}\frac{e^{u(y)}}{|z-y|^{\alpha}}\,dy
+\int_{\mathbb R^{4}\setminus B_{2}(x)}
\frac{e^{u(y)}}{|z-y|^{\alpha}}\,dy.
\]
The first term is bounded by \(C_R|x|^{-\gamma_R}\) by
\eqref{eq:uniform-moving-exponential} and the choice of \(s\). For the second term, let \(p_{\alpha}'=8/\alpha\). Since \(|z-y|\geq1\) there and \(\alpha p_{\alpha}'=8>4\), H\"older's inequality and \eqref{eq:finite-mass} give a bound independent of \(x\). Thus
\begin{equation}\label{eq:V-moving-bound}
\|V\|_{L^{\infty}(B_{1}(x))}\leq C_{R}.
\end{equation}
Combining \eqref{eq:uniform-moving-exponential} and \eqref{eq:V-moving-bound},
\begin{equation*}
\|F\|_{L^{s}(B_{1}(x))}\leq C_{R}|x|^{-\gamma_{R}}.
\end{equation*}
Moreover, \(v_x\geq-m_R\log 6/(8\pi^{2})\), while the preceding Jensen estimate controls its positive part. Hence \(\|v_x\|_{L^{s}(B_1(x))}\leq C_R\). The interior \(W^{4,s}\)-estimate for
\(\Delta^{2}v_x=F\) in \(B_1(x)\) now gives
\[
\|v_x\|_{L^{\infty}(B_{1/2}(x))}
\leq C\left(\|v_x\|_{L^{s}(B_1(x))}
+\|F\|_{L^{s}(B_1(x))}\right)\leq C_R.
\]
Together with \eqref{eq:hx-upper}, this proves
\begin{equation*}
u(x)\leq-(\gamma-\varepsilon)\log|x|+C_{\varepsilon}
\qquad\hbox{for }|x|\geq2.
\end{equation*}

Choose \(\varepsilon\) in \eqref{eq:epsilon-choice-asymptotic} and set
\(\theta:=\gamma-\varepsilon>(8-\alpha)/2\). Then
\[
e^{u(x)}\leq C(1+|x|^{2})^{-\theta/2}.
\]
Since \(\theta+\alpha>4\), Lemma~\ref{lem:Riesz-estimate} applies. If \(\theta<4\), it gives
\[
F(x)\leq C(1+|x|^{2})^{(4-\alpha-2\theta)/2}.
\]
If \(\theta=4\), the same estimate holds with an additional logarithm; if \(\theta>4\), it gives
\(F(x)\leq C(1+|x|^{2})^{-(\alpha+\theta)/2}\). In the first case the corresponding power of \(|x|\) is
\(4-\alpha-2\theta<-4\). In the borderline case the logarithm is
absorbed by a smaller negative power. In the last case the power is
\(-\alpha-\theta<-4\). Thus, after decreasing a positive exponent if
necessary, \eqref{eq:F-tail-decay} follows.

We can now return to the logarithmic potential without a moving singularity problem. Let \(R=|x|\) and \(e=x/R\). Split
\[
\int_{\mathbb R^{4}}\log\frac{|x-y|}{R}F(y)\,dy
=I_{1}(x)+I_{2}(x),
\]
where \(I_1\) is over \(|y|\leq R/2\). On this region
\(|\log|e-y/R||\leq C\), so \(|I_1(x)|\leq CM\). By \eqref{eq:F-tail-decay} and the change of variables \(y=Rz\),
\[
|I_2(x)|
\leq C R^{-\varepsilon_{1}}
\int_{|z|>1/2}|\log|e-z||\,|z|^{-4-\varepsilon_{1}}\,dz
\leq C R^{-\varepsilon_{1}}.
\]
The last integral is finite uniformly for \(e\in\mathbb S^{3}\). Formula
\eqref{eq:normal-representation-asymptotic} therefore proves
\eqref{eq:u-asymptotic}.

Finally, \eqref{eq:F-tail-decay} and local smoothness imply that the first three derivatives of the logarithmic potential may be taken under the integral sign. For \(m=1,2,3\),
\begin{equation}\label{eq:scaled-tail-kernels}
R^{m}\int_{|y|>R/2}|x-y|^{-m}F(y)\,dy
\leq C R^{-\varepsilon_{1}}
\int_{|z|>1/2}|e-z|^{-m}|z|^{-4-\varepsilon_{1}}\,dz
=o(1).
\end{equation}
On \(|y|\leq R/2\), the relevant scaled kernels are uniformly bounded because
\(|e-y/R|\geq1/2\). For each fixed \(A>0\), their limits on \(|y|\leq A\)
are uniform in \(e=x/R\in\mathbb S^{3}\):
\[
R\frac{(x-y)\cdot x}{|x-y|^{2}|x|}\longrightarrow1,
\]
\[
R^{2}\partial_{rr}\log|x-y|
=R^{2}\left(
\frac{1}{|x-y|^{2}}
-2\frac{((x-y)\cdot e)^{2}}{|x-y|^{4}}
\right)\longrightarrow-1,
\]
\[
R^{2}\Delta\log|x-y|\longrightarrow2,
\qquad
R^{3}\partial_{r}\Delta\log|x-y|
=-4R^{3}\frac{(x-y)\cdot x}{|x|\,|x-y|^{4}}
\longrightarrow-4.
\]
On the remaining region \(A<|y|\leq R/2\), the uniform boundedness of these
kernels gives an error bounded by
\[
C\int_{|y|>A}F(y)\,dy.
\]
We first let \(R\to+\infty\) with \(A\) fixed and then let \(A\to+\infty\).
This proves uniform convergence of all interior contributions. The tail
estimate \eqref{eq:scaled-tail-kernels} applies to the complementary region.
Combining these facts with \eqref{eq:normal-representation-asymptotic} yields
\eqref{eq:du-asymptotic}--\eqref{eq:dr-lap-u-asymptotic}, including
\eqref{eq:drr-u-asymptotic}, with uniform remainders in the angular variable.
\end{proof}

The preceding asymptotics allow us to compute the conformal mass.

\begin{lemma}\label{lem:pohozaev}
Let \(u\) be a smooth normal finite-mass solution of \eqref{eq:main}. Then
\begin{equation}\label{eq:pohozaev}
\int_{\mathbb R^{4}}\int_{\mathbb R^{4}}
\frac{e^{u(x)}e^{u(y)}}{|x-y|^{\alpha}}\,dxdy
=
8\pi^{2}(8-\alpha).
\end{equation}
\end{lemma}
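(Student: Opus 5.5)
The plan is a Pohozaev-type argument. Multiply \eqref{eq:main} by \(x\cdot\nabla u\), integrate over \(B_{R}\), and let \(R\to+\infty\). The left side reduces to boundary terms whose limit is fixed by the far-field expansions of Lemma~\ref{lem:normal-asymptotics}. The right side is evaluated by symmetrizing the Riesz double integral. Comparing the two values of
\[
M=\int_{\mathbb R^{4}}F\,dx,
\]
which is exactly the double integral in \eqref{eq:pohozaev}, then determines \(M\).

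\emph{Left side.} Since \(u\) is smooth (Lemma~\ref{lem:classification-regularity}), the standard integration by parts gives
\[
\int_{B_{R}}\Delta^{2}u\,(x\cdot\nabla u)\,dx=\frac{4-4}{2}\int_{B_{R}}(\Delta u)^{2}\,dx+\mathcal B(R).
\]
Because the dimension equals the order, the interior term vanishes, and \(\mathcal B(R)\) involves only products of \(u_{r},u_{rr},\Delta u,\partial_{r}\Delta u\) on \(\partial B_{R}\), weighted by powers of \(R\). Substitute the uniform asymptotics \eqref{eq:du-asymptotic}--\eqref{eq:dr-lap-u-asymptotic}. Each product is then \(R^{-3}\) times an explicit constant plus \(o(R^{-3})\), and integrating over \(|\partial B_{R}|=2\pi^{2}R^{3}\) gives
\[
\mathcal B(R)\to-4\pi^{2}\gamma^{2}=-\frac{M^{2}}{16\pi^{2}},\qquad \gamma=\frac{M}{8\pi^{2}}.
\]
I would check this constant against the known case of the Liouville bubble \(\log\frac{2}{1+|x|^{2}}\) for \(\Delta^{2}w=6e^{4w}\). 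There \(\gamma=2\), and the right side equals \(-\int 6e^{4w}=-16\pi^{2}\), which matches \(-4\pi^{2}\gamma^{2}\).

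\emph{Right side.} Write \(f=e^{u}\), so that
\[
\int_{B_{R}}(x\cdot\nabla u)\,F\,dx=\int_{B_{R}}\int_{\mathbb R^{4}}\frac{(x\cdot\nabla f(x))\,f(y)}{|x-y|^{\alpha}}\,dy\,dx.
\]
First pass to the full space. This is legitimate because \(|x\cdot\nabla u|\le C\) for large \(|x|\) by \eqref{eq:du-asymptotic} and \(F\in L^{1}\), so dominated convergence applies. Then symmetrize in \((x,y)\) to obtain
\[
\frac12\iint\frac{(x\cdot\nabla_{x}+y\cdot\nabla_{y})\bigl(f(x)f(y)\bigr)}{|x-y|^{\alpha}}\,dx\,dy .
\]
Integrate by parts in \(\mathbb R^{8}\) using
\[
\operatorname{div}_{(x,y)}(x,y)=8,\qquad (x\cdot\nabla_{x}+y\cdot\nabla_{y})|x-y|^{-\alpha}=-\alpha|x-y|^{-\alpha}.
\]
The result is \(-\frac{8-\alpha}{2}M\).

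\emph{Main obstacle: justifying the \(\mathbb R^{8}\) integration by parts.} I would carry it out on \(B_{R}\times B_{R}\), or with a cutoff \(\eta(x/R)\eta(y/R)\), and show that the boundary and cutoff contributions vanish. The required decay comes from \(e^{u}\le C(1+|x|)^{-\theta}\) with \(\theta>(8-\alpha)/2\), from \eqref{eq:F-tail-decay}, and from Lemma~\ref{lem:Riesz-estimate}. The pieces needed are: boundary terms of the form \(R\int_{\partial B_{R}}f(x)V(x)\,d\sigma\lesssim R^{4}F(R)\to0\), and the cross region where one variable is near the boundary and the other is not, controlled via the Riesz estimate. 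The same estimates also justify Fubini's theorem throughout.

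\emph{Conclusion.} Equating the two sides gives
\[
-\frac{M^{2}}{16\pi^{2}}=-\frac{8-\alpha}{2}M .
\]
Since \(M>0\) by \eqref{eq:strict-mass-lower}, this yields \(M=8\pi^{2}(8-\alpha)\), which is \eqref{eq:pohozaev}.
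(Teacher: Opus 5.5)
Your proposal is correct and is essentially the paper's proof. It uses the same multiplier $x\cdot\nabla u$, the same boundary limit $-M^{2}/(16\pi^{2})$ from the uniform far-field expansions, and the same value $-\frac{8-\alpha}{2}M$ for the nonlocal side. The only difference is how that last value is obtained: the paper differentiates the dilation-invariant energy $I(u_\lambda)$ at $\lambda=1$, which avoids both the cutoff terms and the diagonal singularity of $|x-y|^{-\alpha}$ that your integration by parts in $\mathbb R^{8}$ must handle. Both issues are harmless here: the cutoff terms vanish simply because the double integral converges, and the diagonal has codimension four while $\alpha<4$.
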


\begin{proof}
For simplicity write
\[
F(x)=
\left(\int_{\mathbb R^{4}}\frac{e^{u(y)}}{|x-y|^{\alpha}}\,dy\right)e^{u(x)},
\qquad
M:=\int_{\mathbb R^{4}}F(x)\,dx.
\]
Since \(u\) is normal, we have
\[
u(x)=
-\frac{1}{8\pi^{2}}
\int_{\mathbb R^{4}}\log|x-y|F(y)\,dy+C.
\]
Lemma~\ref{lem:normal-asymptotics} gives the decay \eqref{eq:F-tail-decay} and the following expansions as \(|x|\to+\infty\):
\[
u(x)=-\frac{M}{8\pi^{2}}\log|x|+O(1),
\]
\[
\partial_{r}u(x)=-\frac{M}{8\pi^{2}}\frac{1}{|x|}+o(|x|^{-1}),
\qquad
\partial_{rr}u(x)=\frac{M}{8\pi^{2}}\frac{1}{|x|^{2}}+o(|x|^{-2}),
\]
\[
\Delta u(x)=-\frac{M}{4\pi^{2}}\frac{1}{|x|^{2}}+o(|x|^{-2}),
\]
and
\[
\partial_{r}\Delta u(x)=
\frac{M}{2\pi^{2}}\frac{1}{|x|^{3}}+o(|x|^{-3}).
\]
Here and below \(\partial_{r}\) denotes the radial derivative on \(\partial B_{R}\).

Let \(B_{R}=B_{R}(0)\). Multiplying \eqref{eq:main} by \(x\cdot\nabla u\) and integrating over \(B_{R}\), we obtain
\begin{equation}\label{eq:pohozaev-start}
\int_{B_{R}}\Delta^{2}u(x)(x\cdot\nabla u(x))\,dx
=
\int_{B_{R}}F(x)(x\cdot\nabla u(x))\,dx.
\end{equation}
We first compute the limit of the left-hand side. Put \(w=\Delta u\). By two integrations by parts,
\[
\begin{aligned}
\int_{B_{R}}\Delta w(x)(x\cdot\nabla u(x))\,dx
&=
\int_{\partial B_{R}}\partial_{\nu}w(x)(x\cdot\nabla u(x))\,dS  \\
&\quad
-
\int_{\partial B_{R}}w(x)\partial_{\nu}(x\cdot\nabla u(x))\,dS
+
\int_{B_{R}}w(x)\Delta(x\cdot\nabla u(x))\,dx.
\end{aligned}
\]
Since
\[
\Delta(x\cdot\nabla u)=x\cdot\nabla\Delta u+2\Delta u=x\cdot\nabla w+2w,
\]
and the dimension is four, we have
\[
\int_{B_{R}}w(x)x\cdot\nabla w(x)\,dx
=
\frac{R}{2}\int_{\partial B_{R}}w^{2}\,dS
-2\int_{B_{R}}w^{2}\,dx.
\]
Thus the interior terms cancel and
\begin{equation}\label{eq:boundary-pohozaev}
\begin{aligned}
\int_{B_{R}}\Delta^{2}u(x)(x\cdot\nabla u(x))\,dx
={}&
\int_{\partial B_{R}}
\left[
\partial_{\nu}\Delta u(x)(x\cdot\nabla u(x))
-
\Delta u(x)\partial_{\nu}(x\cdot\nabla u(x))
\right]dS\\
&+
\frac{R}{2}\int_{\partial B_{R}}(\Delta u(x))^{2}\,dS.
\end{aligned}
\end{equation}
Using the asymptotic estimates above, on \(\partial B_{R}\) we have
\[
x\cdot\nabla u=-\frac{M}{8\pi^{2}}+o(1),
\qquad
\partial_{\nu}(x\cdot\nabla u)
=\partial_{r}(r\partial_{r}u)
=\partial_{r}u+R\partial_{rr}u=o(R^{-1}),
\]
\[
\Delta u=-\frac{M}{4\pi^{2}}R^{-2}+o(R^{-2}),
\qquad
\partial_{\nu}\Delta u=\frac{M}{2\pi^{2}}R^{-3}+o(R^{-3}).
\]
Since \(|\partial B_{R}|=2\pi^{2}R^{3}\), passing to the limit in \eqref{eq:boundary-pohozaev} gives
\begin{equation}\label{eq:left-pohozaev-limit}
\lim_{R\to+\infty}
\int_{B_{R}}\Delta^{2}u(x)(x\cdot\nabla u(x))\,dx
=
-\frac{M^{2}}{16\pi^{2}}.
\end{equation}

We now compute the right-hand side of \eqref{eq:pohozaev-start}. Define
\[
I(u):=
\int_{\mathbb R^{4}}\int_{\mathbb R^{4}}
\frac{e^{u(x)}e^{u(y)}}{|x-y|^{\alpha}}\,dxdy.
\]
Then \(M=I(u)\). For
\[
u_{\lambda}(x)=u(\lambda x)+\frac{8-\alpha}{2}\log\lambda,
\]
the invariance of the double integral gives \(I(u_{\lambda})=I(u)\). The
asymptotic estimate \eqref{eq:du-asymptotic}, together with local smoothness,
shows that
\[
\sup_{z\in\mathbb R^{4}}|z\cdot\nabla u(z)|<+\infty.
\]
Consequently, for \(\lambda\) in a compact neighbourhood of one, the absolute
value of the derivative of the double-integral integrand is integrable. Indeed,
after the change of variables \(z=\lambda x\), \(w=\lambda y\), its integral is
bounded by a constant multiple of
\[
\int_{\mathbb R^{4}}\int_{\mathbb R^{4}}
\frac{e^{u(z)}e^{u(w)}}{|z-w|^{\alpha}}
\bigl(1+|z\cdot\nabla u(z)|+|w\cdot\nabla u(w)|\bigr)\,dzdw<+\infty.
\]
Differentiation under the integral sign at \(\lambda=1\) is therefore
legitimate and gives
\[
\begin{aligned}
0
={}&\left.\frac{d}{d\lambda}I(u_{\lambda})\right|_{\lambda=1}\\
={}&
\int_{\mathbb R^{4}}\int_{\mathbb R^{4}}
\frac{e^{u(x)+u(y)}}{|x-y|^{\alpha}}
\left[x\cdot\nabla u(x)+y\cdot\nabla u(y)+(8-\alpha)\right]\,dxdy\\
={}&
2\int_{\mathbb R^{4}}F(x)(x\cdot\nabla u(x))\,dx
+(8-\alpha)I(u).
\end{aligned}
\]
Therefore
\begin{equation}\label{eq:right-pohozaev-limit}
\int_{\mathbb R^{4}}F(x)(x\cdot\nabla u(x))\,dx
=
-\frac{8-\alpha}{2}M.
\end{equation}
By Lemma~\ref{lem:normal-asymptotics}, \(x\cdot\nabla u(x)=O(1)\) as \(|x|\to+\infty\). Together with \(F\in L^{1}(\mathbb R^{4})\), and in fact the stronger decay \eqref{eq:F-tail-decay}, this justifies passing to the limit by dominated convergence:
\[
\int_{B_{R}}F(x)(x\cdot\nabla u(x))\,dx
\to
\int_{\mathbb R^{4}}F(x)(x\cdot\nabla u(x))\,dx.
\]
Combining \eqref{eq:left-pohozaev-limit} and \eqref{eq:right-pohozaev-limit}, we obtain
\[
-\frac{M^{2}}{16\pi^{2}}
=
-\frac{8-\alpha}{2}M.
\]
Since \(M>0\), we conclude
\[
M=8\pi^{2}(8-\alpha).
\]
Since \(M=I(u)\), this proves \eqref{eq:pohozaev}.
\end{proof}

We next verify the growth and integrability assumptions required by the published higher-order Choquard classification theorem.

\begin{proposition}\label{prop:reduction-HN}
Let \(u\) be a normal finite-mass solution of \eqref{eq:main}, and put
\[
b_{\alpha}:=\frac{8-\alpha}{2},
\qquad
v(x):=\frac{1}{b_{\alpha}}
\left(u(x)-\frac12\log b_{\alpha}\right).
\]
Then \(v\) is a classical solution of
\begin{equation}\label{eq:HN-normalized}
\Delta^{2}v=
\left(\int_{\mathbb R^{4}}
\frac{e^{b_{\alpha}v(y)}}{|x-y|^{\alpha}}\,dy\right)
e^{b_{\alpha}v(x)}
\qquad\hbox{in }\mathbb R^{4},
\end{equation}
and it satisfies
\begin{equation*}
v(x)=o(|x|^{2}),
\qquad
\int_{\mathbb R^{4}}e^{b_{\alpha}v(x)}\,dx<+\infty,
\end{equation*}
together with
\begin{equation*}
\int_{\mathbb R^{4}}\int_{\mathbb R^{4}}
\frac{e^{b_{\alpha}v(x)}e^{b_{\alpha}v(y)}}{|x-y|^{\alpha}}\,dxdy
<+\infty.
\end{equation*}
Consequently, Niu's published classification theorem \cite{NiuClassification} applies.
\end{proposition}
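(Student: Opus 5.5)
The plan is to verify the four claims in turn, each by direct substitution or by appeal to Lemma~\ref{lem:classification-regularity}, Lemma~\ref{lem:normal-asymptotics} and the finite-mass hypothesis \eqref{eq:finite-mass}.

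\emph{The transformed equation.} By Lemma~\ref{lem:classification-regularity}, \(u\in C^{\infty}(\mathbb R^{4})\), so \(v\) is smooth. Since \(u=b_{\alpha}v+\frac12\log b_{\alpha}\), we have \(e^{u}=b_{\alpha}^{1/2}e^{b_{\alpha}v}\). Substituting into \eqref{eq:main} gives
\[
b_{\alpha}\Delta^{2}v=\Delta^{2}u
=b_{\alpha}\left(\int_{\mathbb R^{4}}\frac{e^{b_{\alpha}v(y)}}{|x-y|^{\alpha}}\,dy\right)e^{b_{\alpha}v(x)}.
\]
Dividing by \(b_{\alpha}\) yields \eqref{eq:HN-normalized} pointwise, so \(v\) is a classical solution. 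The Riesz potential is finite everywhere, since it equals \(b_{\alpha}^{-1/2}V\) and \(V\) is locally bounded by the proof of Lemma~\ref{lem:classification-regularity}.

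\emph{Growth condition.} By \eqref{eq:u-asymptotic}, \(u(x)=-\frac{M}{8\pi^{2}}\log|x|+O(1)\) as \(|x|\to\infty\). Hence \(|v(x)|\leq C\log(2+|x|)\), and in particular \(v(x)=o(|x|^{2})\).

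\emph{Integrability.} We have \(e^{b_{\alpha}v}=b_{\alpha}^{-1/2}e^{u}\), so it suffices to show \(e^{u}\in L^{1}(\mathbb R^{4})\).
\begin{itemize}
\item On bounded sets this follows from local boundedness of \(u\).
\item At infinity, \eqref{eq:u-asymptotic} gives \(e^{u}\leq C|x|^{-\gamma}\) with \(\gamma=M/(8\pi^{2})\).
\item By Lemma~\ref{lem:pohozaev} (or already by \eqref{eq:strict-mass-lower}), \(\gamma>4/p_{\alpha}=(8-\alpha)/2\). In fact Lemma~\ref{lem:pohozaev} gives \(\gamma=8-\alpha>4\), since \(\alpha<4\). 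So \(e^{u}\) is integrable at infinity.
\end{itemize}
If one prefers to avoid Lemma~\ref{lem:pohozaev}, only \eqref{eq:strict-mass-lower} is available, and it gives \(\gamma>(8-\alpha)/2\). This exceeds \(4\) only when \(\alpha<0\), so it is not enough by itself. This is the one point needing care, and invoking the exact mass \(M=8\pi^{2}(8-\alpha)\) from Lemma~\ref{lem:pohozaev} settles it. Alternatively, bootstrap through \(F\leq C|x|^{-4-\varepsilon_{1}}\) and Lemma~\ref{lem:Riesz-estimate}: this shows \(V\approx|x|^{-\alpha}\) far away, so \(e^{u}=F/V\) decays like \(|x|^{-4-\varepsilon_{1}+\alpha}\). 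That is weaker, which is why I will use the Pohozaev mass.

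\emph{Double integral.} The double integral equals \(b_{\alpha}^{-1}\) times the double integral of \(e^{u(x)}e^{u(y)}|x-y|^{-\alpha}\). The latter is finite by the Hardy--Littlewood--Sobolev argument in Lemma~\ref{lem:classification-regularity}, and in fact equals \(8\pi^{2}(8-\alpha)\) by Lemma~\ref{lem:pohozaev}.

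With all four hypotheses in place, Niu's theorem \cite{NiuClassification} applies directly.
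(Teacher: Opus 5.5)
Your proposal is correct and follows the paper's proof essentially step for step. You get regularity from Lemma~\ref{lem:classification-regularity}, the equation by substituting \(e^{u}=b_{\alpha}^{1/2}e^{b_{\alpha}v}\), and the growth bound from \eqref{eq:u-asymptotic}. You then obtain integrability of \(e^{u}\) from the exact mass \(M=8\pi^{2}(8-\alpha)\) of Lemma~\ref{lem:pohozaev}, correctly noting that \eqref{eq:strict-mass-lower} alone would not suffice, and identify the double integral as \(b_{\alpha}^{-1}M\); your remarks on finiteness of the Riesz potential and the HLS alternative are harmless additions.
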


\begin{proof}
The regularity follows from Lemma~\ref{lem:classification-regularity}. Since
\[
e^{u}=b_{\alpha}^{1/2}e^{b_{\alpha}v},
\qquad
\Delta^{2}u=b_{\alpha}\Delta^{2}v,
\]
substitution into \eqref{eq:main} gives \eqref{eq:HN-normalized}.

By Lemma~\ref{lem:pohozaev},
\[
M:=\int_{\mathbb R^{4}}F(x)\,dx
=8\pi^{2}(8-\alpha).
\]
Lemma~\ref{lem:normal-asymptotics} therefore yields
\begin{equation*}
u(x)=-(8-\alpha)\log|x|+O(1)
\qquad\hbox{as }|x|\to+\infty.
\end{equation*}
Since \(8-\alpha>4\), it follows that \(e^{u}\in L^{1}(\mathbb R^{4})\), and hence
\[
\int_{\mathbb R^{4}}e^{b_{\alpha}v}\,dx
=b_{\alpha}^{-1/2}\int_{\mathbb R^{4}}e^{u}\,dx<+\infty.
\]
The same asymptotic formula gives \(v(x)=O(\log|x|)=o(|x|^{2})\). Finally,
\[
\int_{\mathbb R^{4}}\int_{\mathbb R^{4}}
\frac{e^{b_{\alpha}v(x)}e^{b_{\alpha}v(y)}}{|x-y|^{\alpha}}\,dxdy
=b_{\alpha}^{-1}M<+\infty.
\]
Thus all assumptions of the cited classification theorem are verified.
\end{proof}

\begin{proof}[Proof of Theorem~\ref{thm:classification}]
By Proposition~\ref{prop:reduction-HN}, Niu's classification theorem \cite{NiuClassification} implies that there exist \(\mu>0\), \(\zeta\in\mathbb R^{4}\), and \(c>0\) such that
\[
u(x)=\frac{8-\alpha}{2}\log
\left(\frac{c\mu}{1+\mu^{2}|x-\zeta|^{2}}\right).
\]
Substitution into \eqref{eq:main}, together with Lemma~\ref{lem:convolution-identity}, gives
\[
c^{8-\alpha}
=\frac{12(8-\alpha)(6-\alpha)(4-\alpha)}{\pi^{2}},
\]
so \(c=C_{\alpha}\). This proves the asserted form of the solution.

For the conformal mass, the change of variables \(z=\mu(x-\zeta)\) gives
\[
\int_{\mathbb R^{4}}
e^{\frac{8}{8-\alpha}U_{\mu,\zeta}(x)}\,dx
=C_{\alpha}^{4}
\int_{\mathbb R^{4}}\frac{dz}{(1+|z|^{2})^{4}}
=\frac{\pi^{2}}{6}C_{\alpha}^{4}.
\]
The double integral is the exact mass obtained in Lemma~\ref{lem:pohozaev}, namely \(8\pi^{2}(8-\alpha)\). This proves \eqref{eq:mass-identity} and \eqref{eq:double-identity}.
\end{proof}

\begin{remark}
The use of \cite{HuangNiu,NiuClassification} is deliberate. It separates the already established general moving-spheres classification from the new spectral content of this paper. What is proved here on the nonlinear side is the implication from the single conformal mass condition and normal distributional representation to the classical growth and integrability assumptions required by that theorem.
\end{remark}

\section{Conformal realization of the linearized operator}

Define the spherical Riesz operator
\begin{equation*}
\mathcal R_{\alpha}\Phi(\xi)
:=\int_{\mathbb S^{4}}
\frac{\Phi(\eta)}{|\xi-\eta|^{\alpha}}\,d\eta.
\end{equation*}

\begin{proposition}\label{prop:Riesz-operator}
The operator \(\mathcal R_{\alpha}\) is positive, compact and self-adjoint on \(L^{2}(\mathbb S^{4})\). Its restriction to \(\mathcal H_{k}\) is multiplication by \(\mu_{k}(\alpha)\).
\end{proposition}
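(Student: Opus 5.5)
The plan is to handle the operator-theoretic properties first and then identify the spectrum using the harmonic decomposition and Lemma~\ref{lem:Funk-Hecke}.

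\emph{Boundedness and compactness.} The kernel \(K(\xi,\eta)=|\xi-\eta|^{-\alpha}\) is symmetric and real. Since \(\alpha<4=\dim\mathbb S^{4}\), it is weakly singular, so
\[
\sup_{\xi}\int_{\mathbb S^{4}}|\xi-\eta|^{-\alpha}\,d\eta=\mu_{0}(\alpha)<\infty,
\]
where the equality uses rotation invariance. Schur's test then shows that \(\mathcal R_{\alpha}\) is bounded on \(L^{2}(\mathbb S^{4})\) with norm at most \(\mu_{0}(\alpha)\). The symmetry of the kernel together with Fubini gives self-adjointness. For compactness, truncate the kernel: set \(K_{\varepsilon}=\min\{K,\varepsilon^{-\alpha}\}\). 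Each \(K_{\varepsilon}\) is bounded, hence Hilbert--Schmidt on the finite-measure space \(\mathbb S^{4}\). By Schur's test again, \(\|\mathcal R_{\alpha}-\mathcal R_{\alpha,\varepsilon}\|\le\sup_{\xi}\int_{|\xi-\eta|<\varepsilon}|\xi-\eta|^{-\alpha}d\eta\le C\varepsilon^{4-\alpha}\to0\). So \(\mathcal R_{\alpha}\) is a norm limit of compact operators and is therefore compact.

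\emph{Diagonalization.} Lemma~\ref{lem:Funk-Hecke} shows that \(\mathcal R_{\alpha}Y=\mu_{k}(\alpha)Y\) for every \(Y\in\mathcal H_{k}\). The \(\mathcal H_{k}\) are finite-dimensional, mutually orthogonal, and their algebraic sum is dense in \(L^{2}(\mathbb S^{4})\). Since \(\mathcal R_{\alpha}\) is bounded, it acts on \(\Phi=\sum_{k}\Phi_{k}\) by \(\mathcal R_{\alpha}\Phi=\sum_{k}\mu_{k}(\alpha)\Phi_{k}\), with convergence in \(L^{2}\). Hence the restriction of \(\mathcal R_{\alpha}\) to each \(\mathcal H_{k}\) is multiplication by \(\mu_{k}(\alpha)\).

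\emph{Positivity.} From \eqref{eq:mu-k}, every factor \(\Gamma(2-\alpha/2)\), \(\Gamma(k+\alpha/2)\), \(\Gamma(\alpha/2)\), \(\Gamma(k+4-\alpha/2)\) has positive argument when \(0<\alpha<4\). So \(\mu_{k}(\alpha)>0\) for all \(k\), and
\[
\langle\mathcal R_{\alpha}\Phi,\Phi\rangle=\sum_{k}\mu_{k}(\alpha)\|\Phi_{k}\|_{2}^{2}>0
\]
for \(\Phi\ne0\). By \eqref{eq:mu-monotone} the \(\mu_{k}\) decrease. Their behaviour \(\mu_{k}\sim Ck^{\alpha-4}\to0\) gives a second, independent confirmation of compactness.

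The only delicate point is the Funk--Hecke input itself, which is already supplied by Lemma~\ref{lem:Funk-Hecke}. Its integrability condition, namely that the weight \((1-t)^{-\alpha/2}(1-t^{2})^{1}\) be integrable near \(t=1\), holds precisely because \(\alpha<4\). The remaining steps are routine.
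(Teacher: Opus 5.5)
Your proof is correct and follows essentially the same route as the paper: Schur's test with the row integral $\mu_0(\alpha)$ gives boundedness, kernel symmetry gives self-adjointness, Lemma~\ref{lem:Funk-Hecke} gives the diagonalization, and positivity of every $\mu_k(\alpha)$ gives positivity. The one difference is your primary compactness argument, which approximates $\mathcal R_\alpha$ in operator norm by bounded-kernel (Hilbert--Schmidt) truncations and so needs no eigenvalue information; the paper instead uses the decay $\mu_k(\alpha)=O(k^{\alpha-4})$ with finite-rank spectral truncations, which is the argument you give as your second confirmation.
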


\begin{proof}
The kernel is symmetric. Since
\[
\sup_{\xi\in\mathbb S^{4}}
\int_{\mathbb S^{4}}|\xi-\eta|^{-\alpha}\,d\eta
=\mu_{0}(\alpha)<+\infty,
\]
Schur's test gives boundedness on \(L^{2}\), and symmetry gives self-adjointness. Lemma~\ref{lem:Funk-Hecke} diagonalizes the operator on the orthogonal decomposition into spherical harmonics. Since \(\mu_k(\alpha)>0\) for every \(k\), the operator is positive. Formula \eqref{eq:mu-k} and the gamma quotient asymptotic imply \(\mu_{k}(\alpha)=O(k^{\alpha-4})\to0\). Hence the finite-rank truncations on \(\bigoplus_{k=0}^{K}\mathcal H_{k}\) converge in operator norm to \(\mathcal R_{\alpha}\), proving compactness.
\end{proof}

Set
\begin{equation}\label{eq:kappa-alpha}
\kappa_{\alpha}:=C_{\alpha}^{8-\alpha}2^{-(8-\alpha)}.
\end{equation}
A direct use of \eqref{eq:C-alpha}, \eqref{eq:mu0-mu1} and \eqref{eq:sphere-volume} gives
\begin{equation}\label{eq:kappa-identities}
\kappa_{\alpha}\mu_{0}(\alpha)=3(8-\alpha),
\qquad
\frac{4\pi^{2}(8-\alpha)}{|\mathbb S^{4}|\mu_{0}(\alpha)}
=\frac{\kappa_{\alpha}}{2}.
\end{equation}

The Euclidean and spherical linearized operators are related by the following identity.

\begin{proposition}\label{prop:conformal-linearized}
Let \(\Phi\) be smooth on \(\mathbb S^{4}\), and put \(\phi=S^{*}\Phi\). Then the Euclidean linearized operator at \(U\) is transformed into \(\mathcal L_{\alpha}\):
\begin{equation}\label{eq:conformal-linearized-identity}
\begin{aligned}
&\Delta^{2}\phi(x)
-
\left(\int_{\mathbb R^{4}}
\frac{e^{U(y)}\phi(y)}{|x-y|^{\alpha}}\,dy\right)e^{U(x)}\\
&\quad-
\left(\int_{\mathbb R^{4}}
\frac{e^{U(y)}}{|x-y|^{\alpha}}\,dy\right)e^{U(x)}\phi(x)
=
\omega(x)^{4}S^{*}(\mathcal L_{\alpha}\Phi)(x).
\end{aligned}
\end{equation}
Moreover, if
\begin{equation*}
\begin{aligned}
\mathcal Q_{U}(\phi)
:={}&\int_{\mathbb R^{4}}|\Delta\phi|^{2}\,dx
-
\int_{\mathbb R^{4}}\int_{\mathbb R^{4}}
\frac{e^{U(x)}\phi(x)e^{U(y)}\phi(y)}{|x-y|^{\alpha}}\,dxdy\\
&-
\int_{\mathbb R^{4}}
\left(\int_{\mathbb R^{4}}
\frac{e^{U(y)}}{|x-y|^{\alpha}}\,dy\right)
e^{U(x)}\phi(x)^{2}\,dx,
\end{aligned}
\end{equation*}
then
\begin{equation}\label{eq:quadratic-form-conformal}
\mathcal Q_{U}(S^{*}\Phi)
=\langle\mathcal L_{\alpha}\Phi,\Phi\rangle_{L^{2}(\mathbb S^{4})}.
\end{equation}
\end{proposition}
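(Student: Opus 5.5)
The plan is to transform each of the three terms separately, using the identities \eqref{eq:stereo-identities}, the covariance \eqref{eq:Paneitz-covariance}, and the explicit form of the bubble.

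First, compute \(e^{U}\) in terms of \(\omega\):
\[
e^{U(x)}=\Bigl(\frac{C_{\alpha}}{1+|x|^{2}}\Bigr)^{\frac{8-\alpha}{2}}=\Bigl(\frac{C_{\alpha}}{2}\Bigr)^{\frac{8-\alpha}{2}}\omega(x)^{\frac{8-\alpha}{2}},
\qquad
e^{U(x)}e^{U(y)}=\kappa_{\alpha}^{1/2}\cdot\kappa_{\alpha}^{1/2}\,\omega(x)^{\frac{8-\alpha}{2}}\omega(y)^{\frac{8-\alpha}{2}}.
\]
The Paneitz term is exactly \eqref{eq:Paneitz-covariance} with \(\phi=S^{*}\Phi\).

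For the nonlocal term, change variables \(\eta=S(y)\), so \(dy=\omega(y)^{-4}d\eta\). Use \(|x-y|^{-\alpha}=\omega(x)^{\alpha/2}\omega(y)^{\alpha/2}|S(x)-S(y)|^{-\alpha}\). The \(y\)-weight then becomes \(\omega(y)^{\frac{8-\alpha}{2}+\frac{\alpha}{2}-4}=1\), which is the cancellation \eqref{eq:HLS-weight-cancellation}. The \(x\)-weight becomes \(\omega(x)^{\frac{8-\alpha}{2}+\frac{\alpha}{2}}=\omega(x)^{4}\). Hence the second term equals \(\omega(x)^{4}\kappa_{\alpha}S^{*}(\mathcal R_{\alpha}\Phi)(x)\).

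The third term is the same computation with \(\Phi\equiv1\). It gives \(\omega^{4}\kappa_{\alpha}\mathcal R_{\alpha}1\cdot\phi=\omega^{4}\kappa_{\alpha}\mu_{0}(\alpha)\phi\), using Lemma~\ref{lem:Funk-Hecke}. As a cross-check, it can also be read off from \eqref{eq:Riesz-bubble} and Lemma~\ref{lem:convolution-identity}. Summing the three terms yields \eqref{eq:conformal-linearized-identity}, and checking that the constants collapse exactly to \(\kappa_\alpha=C_\alpha^{8-\alpha}2^{-(8-\alpha)}\) is the bookkeeping step.

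For the quadratic form \eqref{eq:quadratic-form-conformal}, the two potential terms transform by the same change of variables in both \(x\) and \(y\). Each \(dx\) contributes \(\omega^{-4}\), which cancels the \(\omega^{4}\) factors found above. This gives \(\kappa_{\alpha}\langle\mathcal R_{\alpha}\Phi,\Phi\rangle\) and \(\kappa_{\alpha}\mu_{0}\|\Phi\|_{2}^{2}\).

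The main obstacle is \(\int|\Delta\phi|^{2}dx=\langle P_{4}\Phi,\Phi\rangle\), since \(\phi\) need not decay: it tends to the constant \(\Phi(p)\) at infinity. My approach is to integrate by parts on \(B_{R}\):
\[
\int_{B_R}|\Delta\phi|^{2}=\int_{B_R}\phi\,\Delta^{2}\phi+\text{boundary terms}.
\]
I would then use that \(\Phi\) is smooth near \(p\). In Kelvin/inverted coordinates this gives \(\phi(x)=\Phi(p)+O(|x|^{-1})\), with \(\nabla\phi=O(|x|^{-2})\), \(\Delta\phi=O(|x|^{-4})\) (the first-order \(|x|^{-1}\) term is harmonic-like up to \(|x|^{-3}\)), and \(\nabla\Delta\phi=O(|x|^{-5})\).

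With these rates the boundary terms \(\int_{\partial B_R}(\phi\,\partial_\nu\Delta\phi-\partial_\nu\phi\,\Delta\phi)\) are \(O(R^{3}\cdot R^{-5})\to0\). Then
\[
\int\phi\,\Delta^{2}\phi\,dx=\int\phi\,\omega^{4}S^{*}(P_{4}\Phi)\,dx=\langle P_{4}\Phi,\Phi\rangle_{L^{2}(\mathbb S^{4})}.
\]
If the decay of \(\Delta\phi\) is unclear, the fallback is to verify the identity first for \(\Phi\) vanishing near \(p\), where it is trivial. Both sides are continuous in \(H^{2}(\mathbb S^{4})\) once one knows \(\int|\Delta S^{*}\Phi|^{2}\leq C\|\Phi\|_{H^{2}}^{2}\). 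Alternatively, one can subtract \(\Phi(p)\) and use that constants contribute nothing to either side.
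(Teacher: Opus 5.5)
Your proposal is correct and follows the paper's route. You write \(e^{U}=\kappa_\alpha^{1/2}\omega^{(8-\alpha)/2}\), cancel the \(y\)-weights with the stereographic identities to get \(\kappa_\alpha\omega^{4}S^{*}(\mathcal R_\alpha\Phi)\) and \(\kappa_\alpha\mu_0(\alpha)\omega^4\phi\), apply Paneitz covariance, and your integration by parts on \(B_R\) supplies the justification of \(\int_{\mathbb R^4}|\Delta S^{*}\Phi|^{2}\,dx=\langle P_4^{\mathbb S^4}\Phi,\Phi\rangle\) that the paper only asserts. One correction: in general \(\Delta\phi=O(|x|^{-3})\) and \(\nabla\Delta\phi=O(|x|^{-4})\), not \(O(|x|^{-4})\) and \(O(|x|^{-5})\); for example, \(\Delta(x_1/|x|^{2})=-4x_1/|x|^{4}\) in \(\mathbb R^4\). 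Even so, the boundary terms are \(O(R^{3}\cdot R^{-4})=O(R^{-1})\to0\) and \(|\Delta\phi|^2=O(|x|^{-6})\) is integrable, so your argument stands.
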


\begin{proof}
From \eqref{eq:normalized-bubble} and \eqref{eq:omega},
\begin{equation}\label{eq:eU-omega}
e^{U(x)}
=C_{\alpha}^{\frac{8-\alpha}{2}}2^{-\frac{8-\alpha}{2}}
\omega(x)^{\frac{8-\alpha}{2}}.
\end{equation}
Let \(\xi=Sx\), \(\eta=Sy\). Using \eqref{eq:stereo-identities},
\[
\begin{aligned}
\int_{\mathbb R^{4}}
\frac{e^{U(y)}\phi(y)}{|x-y|^{\alpha}}\,dy
&=C_{\alpha}^{\frac{8-\alpha}{2}}2^{-\frac{8-\alpha}{2}}
\omega(x)^{\alpha/2}
\int_{\mathbb S^{4}}
\frac{\Phi(\eta)}{|\xi-\eta|^{\alpha}}\,d\eta=C_{\alpha}^{\frac{8-\alpha}{2}}2^{-\frac{8-\alpha}{2}}
\omega(x)^{\alpha/2}
S^{*}(\mathcal R_{\alpha}\Phi)(x).
\end{aligned}
\]
The same formula with \(\Phi\equiv1\) gives
\[
\int_{\mathbb R^{4}}
\frac{e^{U(y)}}{|x-y|^{\alpha}}\,dy
=C_{\alpha}^{\frac{8-\alpha}{2}}2^{-\frac{8-\alpha}{2}}
\mu_{0}(\alpha)\omega(x)^{\alpha/2}.
\]
Multiplication by \eqref{eq:eU-omega} produces the common factor \(\kappa_{\alpha}\omega^{4}\). Combining this with the Paneitz covariance \eqref{eq:Paneitz-covariance} proves \eqref{eq:conformal-linearized-identity}.

The conformal energy identity
\[
\int_{\mathbb R^{4}}|\Delta S^{*}\Phi|^{2}\,dx
=\int_{\mathbb S^{4}}\Phi P_{4}^{\mathbb S^{4}}\Phi\,d\xi
\]
follows from \eqref{eq:Paneitz-covariance}. A second use of \eqref{eq:stereo-identities} gives
\[
\int_{\mathbb R^{4}}\int_{\mathbb R^{4}}
\frac{e^{U(x)}\phi(x)e^{U(y)}\phi(y)}{|x-y|^{\alpha}}\,dxdy
=\kappa_{\alpha}\langle\mathcal R_{\alpha}\Phi,\Phi\rangle,
\]
and
\[
\int_{\mathbb R^{4}}
\left(\int_{\mathbb R^{4}}
\frac{e^{U(y)}}{|x-y|^{\alpha}}\,dy\right)
e^{U(x)}\phi(x)^{2}\,dx
=\kappa_{\alpha}\mu_{0}(\alpha)\|\Phi\|_{2}^{2}.
\]
This proves \eqref{eq:quadratic-form-conformal}.
\end{proof}

We now remove the possible defect at the stereographic pole. The first step records the weak form of the conformal transfer. This avoids applying the pointwise identity of Proposition~\ref{prop:conformal-linearized} directly to a merely weighted \(L^{2}\)-solution.

\begin{lemma}\label{lem:weak-conformal-transfer}
Let \(\phi\in L^{2}_{w}(\mathbb R^{4})\) be a distributional solution of
\eqref{eq:linearized}, and put \(\Phi=S_{*}\phi\). Then
\(\Phi\in L^{2}(\mathbb S^{4})\) and
\begin{equation}\label{eq:sphere-away-pole}
\mathcal L_{\alpha}\Phi=0
\qquad\hbox{in }\mathcal D'(\mathbb S^{4}\setminus\{p\}).
\end{equation}
\end{lemma}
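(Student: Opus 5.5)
The plan is to test the Euclidean distributional equation against pullbacks of spherical test functions supported away from the pole. First, \(\Phi\in L^{2}(\mathbb S^{4})\) follows directly from the isometry \eqref{eq:L2-isometry}. Fix \(\Psi\in C_{c}^{\infty}(\mathbb S^{4}\setminus\{p\})\). The function \(\varphi:=\omega^{4}S^{*}\Psi\) is smooth and compactly supported in \(\mathbb R^{4}\), because the support of \(\Psi\) avoids the pole, so its preimage is bounded. Hence \(\varphi\) is an admissible test function for \eqref{eq:linearized}. The goal is to show that each term of the Euclidean weak formulation, tested against \(\varphi\), equals the corresponding term of \(\langle\Phi,\mathcal L_{\alpha}\Psi\rangle_{L^{2}(\mathbb S^{4})}\). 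This pairing is exactly the distributional statement \eqref{eq:sphere-away-pole}, since \(\mathcal L_{\alpha}\) is formally self-adjoint: \(P_{4}^{\mathbb S^{4}}\) is symmetric and the Riesz kernel is symmetric.

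The terms are handled in the following order.

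\emph{Biharmonic term.} We need
\[
\int_{\mathbb R^{4}}\phi\,\Delta^{2}\varphi\,dx=\int_{\mathbb S^{4}}\Phi\,P_{4}^{\mathbb S^{4}}\Psi\,d\xi .
\]
It is not enough to move \(\Delta^{2}\) across by covariance, since covariance \eqref{eq:Paneitz-covariance} concerns \(\Delta^{2}(S^{*}\Psi)\), not \(\Delta^{2}(\omega^{4}S^{*}\Psi)\). Instead I would use the dual (adjoint) form of covariance: for smooth compactly supported data, the conformal invariance of the quadratic form \(\int|\Delta S^{*}\Psi|^{2}=\int\Psi P_{4}\Psi\) polarizes to
\[
\int_{\mathbb R^{4}}S^{*}\Theta\,\Delta^{2}(S^{*}\Psi)\,dx=\int_{\mathbb S^{4}}\Theta\,P_{4}\Psi\,d\xi .
\]
Rather than this, the cleaner route is to choose the test function as \(\varphi:=S^{*}\Psi\) itself, which is still \(C_{c}^{\infty}(\mathbb R^{4})\). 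Then \(\Delta^{2}\varphi=\omega^{4}S^{*}(P_{4}\Psi)\) by \eqref{eq:Paneitz-covariance}, and \(\int\phi\,\Delta^{2}\varphi\,dx=\int\phi\,\omega^{4}S^{*}(P_{4}\Psi)\,dx=\int_{\mathbb S^{4}}\Phi\,P_{4}\Psi\,d\xi\) by \(d\xi=\omega^{4}dx\). So I take \(\varphi=S^{*}\Psi\).

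\emph{Potential terms.} With the same \(\varphi\), the identities in the proof of Proposition~\ref{prop:conformal-linearized} give
\[
\int\Bigl(\int\frac{e^{U}}{|x-y|^{\alpha}}\,dy\Bigr)e^{U}\phi\varphi\,dx=\kappa_{\alpha}\mu_{0}\int_{\mathbb S^{4}}\Phi\Psi\,d\xi ,
\]
using \eqref{eq:eU-omega} and the Riesz formula with \(\Phi\equiv1\). For the nonlocal term, \eqref{eq:stereo-identities} turns the double integral into \(\kappa_{\alpha}\iint\Phi(\eta)\Psi(\xi)|\xi-\eta|^{-\alpha}\,d\xi\,d\eta=\kappa_{\alpha}\langle\Phi,\mathcal R_{\alpha}\Psi\rangle\), after Fubini and symmetry of the kernel.

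The main obstacle is justifying these manipulations for a merely weighted \(L^{2}\) function \(\phi\), in particular that the definition of distributional solution makes sense. This needs the nonlocal term \(\bigl(\int e^{U}\phi\,|x-y|^{-\alpha}\,dy\bigr)e^{U}\) to be locally integrable and Fubini to apply. I would verify absolute convergence on the sphere side. Since \(\Phi\in L^{2}\subset L^{1}(\mathbb S^{4})\), Schur's bound in Proposition~\ref{prop:Riesz-operator} gives \(\mathcal R_{\alpha}|\Phi|\in L^{2}\), and \(\Psi\) is bounded, so \(\iint|\Phi(\eta)||\Psi(\xi)||\xi-\eta|^{-\alpha}<\infty\). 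Pulling this back via \eqref{eq:stereo-identities}, where all Jacobian factors are exact and nonnegative, gives absolute convergence of the Euclidean integral. The remaining terms are bounded similarly, using \(|\Phi|\,|P_{4}\Psi|\in L^{1}\) and the boundedness of \(\omega\). Summing the three identities yields \(\langle\Phi,\mathcal L_{\alpha}\Psi\rangle=0\) for all such \(\Psi\), which is \eqref{eq:sphere-away-pole}.
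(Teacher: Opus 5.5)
Your final argument is exactly the paper's proof: test against \(S^{*}\Psi\) (you were right to discard \(\omega^{4}S^{*}\Psi\)), use the Paneitz covariance for the biharmonic term, and use the stereographic identities to convert the two potential terms into \(\kappa_{\alpha}\langle\Phi,\mathcal R_{\alpha}\Psi\rangle\) and \(\kappa_{\alpha}\mu_{0}(\alpha)\langle\Phi,\Psi\rangle\). Your absolute-convergence check via \(\mathcal R_{\alpha}|\Phi|\in L^{2}(\mathbb S^{4})\), which also shows the nonlocal right-hand side is locally integrable, is a slightly more explicit form of the paper's appeal to Cauchy--Schwarz and the \(L^{2}\)-boundedness of \(\mathcal R_{\alpha}\).
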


\begin{proof}
Identity \eqref{eq:L2-isometry} gives \(\Phi\in L^{2}(\mathbb S^{4})\). Let
\(\chi\in C_{c}^{\infty}(\mathbb S^{4}\setminus\{p\})\). Then
\(S^{*}\chi\in C_{c}^{\infty}(\mathbb R^{4})\). Denote the Euclidean
linearized distribution by
\[
\mathscr L_{U}\phi
:=
\Delta^{2}\phi
-e^{U}\int_{\mathbb R^{4}}
\frac{e^{U(y)}\phi(y)}{|\,\cdot-y\,|^{\alpha}}\,dy
-e^{U}\phi\int_{\mathbb R^{4}}
\frac{e^{U(y)}}{|\,\cdot-y\,|^{\alpha}}\,dy.
\]
We claim that
\[
\langle\mathcal L_{\alpha}\Phi,\chi\rangle
=
\langle\mathscr L_{U}\phi,S^{*}\chi\rangle.
\]
Indeed, the local term satisfies
\[
\int_{\mathbb R^{4}}\phi\,\Delta^{2}(S^{*}\chi)\,dx
=
\int_{\mathbb R^{4}}\phi\,\omega^{4}
S^{*}\bigl(P_{4}^{\mathbb S^{4}}\chi\bigr)\,dx
=
\int_{\mathbb S^{4}}\Phi P_{4}^{\mathbb S^{4}}\chi\,d\xi.
\]
For the first nonlocal term, the change of variables used in
Proposition~\ref{prop:conformal-linearized}, followed by the self-adjointness
of \(\mathcal R_{\alpha}\), gives
\[
\begin{aligned}
&\int_{\mathbb R^{4}}e^{U(x)}S^{*}\chi(x)
\left(\int_{\mathbb R^{4}}
\frac{e^{U(y)}\phi(y)}{|x-y|^{\alpha}}\,dy\right)dx
=\kappa_{\alpha}
\int_{\mathbb S^{4}}\chi\,\mathcal R_{\alpha}\Phi\,d\xi
=\kappa_{\alpha}
\int_{\mathbb S^{4}}\Phi\,\mathcal R_{\alpha}\chi\,d\xi.
\end{aligned}
\]
The remaining nonlocal term becomes
\[
\int_{\mathbb R^{4}}
\left(\int_{\mathbb R^{4}}
\frac{e^{U(y)}}{|x-y|^{\alpha}}\,dy\right)
e^{U(x)}\phi(x)S^{*}\chi(x)\,dx
=
\kappa_{\alpha}\mu_{0}(\alpha)
\int_{\mathbb S^{4}}\Phi\chi\,d\xi.
\]
All these integrals are finite. This follows from Cauchy--Schwarz on the
sphere and the boundedness of \(\mathcal R_{\alpha}\) on \(L^{2}\). Therefore
\[
\langle\mathcal L_{\alpha}\Phi,\chi\rangle
=
\langle\mathscr L_{U}\phi,S^{*}\chi\rangle=0
\]
by the distributional equation \eqref{eq:linearized}. This proves
\eqref{eq:sphere-away-pole}.
\end{proof}

We next identify the distributional defects compatible with the weighted \(L^{2}\) class.

\begin{lemma}\label{lem:pole-defect}
Let \(p\in\mathbb S^{4}\), \(\Phi\in L^{2}(\mathbb S^{4})\), and assume that
\[
\mathcal L_{\alpha}\Phi=0
\qquad\hbox{in }\mathcal D'(\mathbb S^{4}\setminus\{p\}).
\]
Then, in geodesic coordinates \((z_{1},\ldots,z_{4})\) centered at \(p\),
\begin{equation}\label{eq:defect-form}
\mathcal L_{\alpha}\Phi
=c_{0}\delta_{p}+\sum_{j=1}^{4}c_{j}\partial_{z_{j}}\delta_{p}
\end{equation}
for suitable constants \(c_{0},\ldots,c_{4}\).
\end{lemma}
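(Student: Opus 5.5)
We would first note that \(T:=\mathcal L_{\alpha}\Phi\) is a well-defined distribution on all of \(\mathbb S^{4}\), supported in \(\{p\}\). Indeed, \(P_{4}^{\mathbb S^{4}}\Phi\) is defined by duality, \(\langle P_{4}^{\mathbb S^{4}}\Phi,\chi\rangle=\int\Phi P_{4}^{\mathbb S^{4}}\chi\). The terms \(\kappa_{\alpha}\mathcal R_{\alpha}\Phi\) and \(\kappa_{\alpha}\mu_{0}\Phi\) lie in \(L^{2}(\mathbb S^{4})\) by Proposition~\ref{prop:Riesz-operator}. By hypothesis \(T\) vanishes on \(\mathbb S^{4}\setminus\{p\}\), so \(\operatorname{supp}T\subset\{p\}\). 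The structure theorem for point-supported distributions (Schwartz) then gives, in geodesic coordinates, a finite sum
\[
T=\sum_{|\beta|\leq m}c_{\beta}\partial_{z}^{\beta}\delta_{p}.
\]
The task reduces to showing that \(c_{\beta}=0\) whenever \(|\beta|\geq2\).

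For this we would use Sobolev regularity of order. Since \(\Phi\in L^{2}=H^{0}\) and \(P_{4}^{\mathbb S^{4}}\) has order four, we get \(P_{4}^{\mathbb S^{4}}\Phi\in H^{-4}(\mathbb S^{4})\). The zeroth-order terms lie in \(L^{2}\subset H^{-4}\), so \(T\in H^{-4}(\mathbb S^{4})\). We then localize with a cutoff \(\psi\) equal to one near \(p\) and pass to \(\mathbb R^{4}\) via the chart; this gives \(\psi T\in H^{-4}(\mathbb R^{4})\), since the cutoff does not affect point-supported distributions. On the Fourier side,
\[
\widehat{\partial^{\beta}\delta}(\zeta)=(i\zeta)^{\beta},
\]
and \(\sum c_{\beta}(i\zeta)^{\beta}\) is a polynomial. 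Its contribution to \(\int(1+|\zeta|^{2})^{-4}|\hat T|^{2}\,d\zeta\) is finite iff the polynomial degree \(d\) satisfies \(2d-8<-4\), i.e. \(d<2\). We need to verify that the top-degree homogeneous part controls the integral on a large annulus. This holds because a nonzero homogeneous polynomial of degree \(d\) has \(\int_{\mathbb S^{3}}|P_{d}|^{2}>0\), so the integrand is bounded below by \(c|\zeta|^{2d-8}\) on a positive-measure cone. Hence all coefficients with \(|\beta|\geq2\) vanish, and \(T\) has the form \eqref{eq:defect-form}. The borderline case \(d=2\) diverges logarithmically, which is exactly what rules out second derivatives of \(\delta_{p}\). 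This matches the logarithmic Green function remark in the introduction.

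The main obstacle is making the \(H^{-4}\) membership rigorous on the sphere and its transfer to the chart. We would handle this with the spectral definition
\[
H^{s}(\mathbb S^{4})=\Bigl\{\sum Y_{k}:\ \sum(1+k)^{2s}\|Y_{k}\|^{2}<\infty\Bigr\}.
\]
Then \(P_{4}^{\mathbb S^{4}}\) maps \(H^{0}\to H^{-4}\) boundedly by \eqref{eq:Paneitz-eigen}. We also need the equivalence of this space with the local \(H^{-4}\) space in charts, via the standard invariance of Sobolev spaces under diffeomorphisms and multiplication by smooth cutoffs. A secondary check is that the choice of geodesic versus stereographic chart only changes the coefficients by an invertible linear transformation on the span of \(\delta_{p}\) and its first derivatives. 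This is because a diffeomorphism fixing \(p\) maps \(\partial_{z_{j}}\delta_{p}\) to combinations of first derivatives and \(\delta_{p}\), preserving the form \eqref{eq:defect-form}.
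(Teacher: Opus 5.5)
Your proposal is correct and follows the paper's proof essentially step by step. Both arguments use Schwartz's structure theorem for the point-supported \(T=\mathcal L_{\alpha}\Phi\), the membership \(T\in H^{-4}_{\operatorname{loc}}\) (from \(\Phi\in L^{2}\) and boundedness of \(\mathcal R_{\alpha}\)), a cutoff near \(p\), and a Fourier-side lower bound by the top-degree homogeneous part on a positive-measure cone, which forces \(\int^{\infty}r^{3+2m}(1+r^{2})^{-4}\,dr<\infty\) and hence \(m\le1\); your remarks on the spectral definition of \(H^{s}(\mathbb S^{4})\) and chart invariance only make explicit what the paper takes for granted.
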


\begin{proof}
Set \(T:=\mathcal L_{\alpha}\Phi\). The distribution \(T\) is supported at
\(p\), hence in local coordinates it has the form
\begin{equation}\label{eq:point-supported-general}
T=\sum_{|\beta|\leq m}c_{\beta}\partial^{\beta}\delta_{0}
\end{equation}
for some finite \(m\). Since \(\Phi\in L^{2}\),
\(P_{4}^{\mathbb S^{4}}\Phi\in H^{-4}\). Moreover,
\(\mathcal R_{\alpha}\Phi+\mu_{0}(\alpha)\Phi\in L^{2}\subset H^{-4}\).
Thus
\begin{equation}\label{eq:defect-Hminus4}
T\in H^{-4}_{\operatorname{loc}}.
\end{equation}
Choose a smooth cut-off which equals one near the origin. Since \(T\) is
supported at the origin, multiplication by this cut-off leaves \(T\)
unchanged, so the usual Fourier characterization of \(H^{-4}\) applies. We
show that \eqref{eq:defect-Hminus4} excludes every derivative of order at
least two. Let \(m\) be the largest order occurring in
\eqref{eq:point-supported-general}, and let
\(P_{m}(\xi)=\sum_{|\beta|=m}c_{\beta}(i\xi)^{\beta}\). If the top-order
coefficients are not all zero, then \(P_m\) is a nonzero homogeneous
polynomial. There are a measurable set \(E\subset\mathbb S^3\) of positive
measure and \(c>0\) such that \(|P_m(\omega)|\geq c\) on \(E\). The Fourier
transform of \eqref{eq:point-supported-general} is \(P_m(\xi)\) plus a
polynomial of degree at most \(m-1\). The lower-order polynomial is bounded by
\(Cr^{m-1}\) on \(r\mathbb S^3\). Hence, uniformly for \(\omega\in E\) and all
sufficiently large \(r\), the absolute value of the full Fourier transform is
at least \(cr^m/2\). Its \(H^{-4}\)-norm at high frequency therefore contains
a fixed positive multiple of
\[
\int_{1}^{\infty}r^{3+2m}(1+r^{2})^{-4}\,dr.
\]
This integral is finite exactly when \(m<2\). Consequently \(m\leq1\), which
is precisely \eqref{eq:defect-form}.
\end{proof}

The first spherical harmonics remove all of these possible defects.

\begin{proposition}\label{prop:global-sphere-equation}
Let \(\phi\in L^{2}_{w}(\mathbb R^{4})\) be a distributional solution of
\eqref{eq:linearized}, and put \(\Phi=S_{*}\phi\). Then
\(\Phi\in H^{4}(\mathbb S^{4})\) and
\begin{equation*}
\mathcal L_{\alpha}\Phi=0
\qquad\hbox{in }\mathbb S^{4}.
\end{equation*}
\end{proposition}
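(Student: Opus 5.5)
By Lemma~\ref{lem:weak-conformal-transfer}, \(\Phi\in L^{2}(\mathbb S^{4})\) and \(\mathcal L_{\alpha}\Phi=0\) away from the pole. Lemma~\ref{lem:pole-defect} then gives the identity
\[
\mathcal L_{\alpha}\Phi=T:=c_{0}\delta_{p}+\sum_{j=1}^{4}c_{j}\partial_{z_{j}}\delta_{p}
\qquad\hbox{in }\mathcal D'(\mathbb S^{4}).
\]
The plan is to test this identity against the elements of \(\mathcal H_{0}\oplus\mathcal H_{1}\), i.e.\ against \(1,\xi_{1},\ldots,\xi_{5}\). Since \(\mathcal L_{\alpha}\) is formally self-adjoint, \(\langle\mathcal L_{\alpha}\Phi,Y\rangle=\langle\Phi,\mathcal L_{\alpha}Y\rangle\) for smooth \(Y\). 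This holds because \(P_{4}^{\mathbb S^{4}}\) is symmetric on distributions by definition, and \(\mathcal R_{\alpha}\) is symmetric on \(L^{2}\). On \(\mathcal H_{k}\) the operator \(\mathcal L_{\alpha}\) acts by the scalar \(k(k+1)(k+2)(k+3)-\kappa_{\alpha}(\mu_{k}+\mu_{0})\), using \eqref{eq:Paneitz-eigen} and Proposition~\ref{prop:Riesz-operator}.

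For \(k=1\) this scalar equals \(24-\kappa_{\alpha}(\mu_{1}+\mu_{0})\). From \eqref{eq:kappa-identities}, \(\kappa_{\alpha}\mu_{0}=3(8-\alpha)\). From \eqref{eq:mu0-mu1}, \(\mu_{1}/\mu_{0}=\alpha/(8-\alpha)\), so \(\kappa_{\alpha}\mu_{1}=3\alpha\). Hence \(\kappa_{\alpha}(\mu_{0}+\mu_{1})=24\) and \(\mathcal L_{\alpha}\xi_{i}=0\). Pairing with \(\xi_{i}\) therefore gives
\[
0=\langle T,\xi_{i}\rangle=c_{0}\xi_{i}(p)-\sum_{j}c_{j}\partial_{z_{j}}\xi_{i}(p),\qquad 1\le i\le5.
\]
The five functions \(\xi_{i}\) restricted to the pole span all \(1\)-jets at \(p\). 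At \(p=(0,0,0,0,-1)\), the function \(\xi_{5}\) has value \(-1\) and vanishing gradient. For \(i\le4\), \(\xi_{i}\) vanishes at \(p\) and has differential \(dz_{i}\) there, up to orientation. So this \(5\times5\) linear system is invertible, which gives \(c_{0}=\cdots=c_{4}=0\).

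A little care is needed for the distributional pairing \(\langle\Phi,P_{4}^{\mathbb S^{4}}Y\rangle\). The identity holds in \(\mathcal D'(\mathbb S^{4})\) tested against all smooth functions on the whole sphere, and smooth \(Y\) is admissible. This is where the defect is detected: the test function does not vanish at \(p\). I expect this to be the main point to state carefully, namely that the pairing \(\langle\mathcal L_{\alpha}\Phi,\chi\rangle\) with \(\chi\) not supported away from \(p\) is given by \(\int\Phi\,\mathcal L_{\alpha}\chi\). This is exactly the definition of \(\mathcal L_{\alpha}\Phi\) as a distribution on the closed sphere, since the nonlocal part is bounded on \(L^{2}\).

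Hence \(\mathcal L_{\alpha}\Phi=0\) on all of \(\mathbb S^{4}\). Regularity follows by rewriting
\[
P_{4}^{\mathbb S^{4}}\Phi=\kappa_{\alpha}(\mathcal R_{\alpha}\Phi+\mu_{0}\Phi)\in L^{2}.
\]
Elliptic regularity of the Paneitz operator then gives \(\Phi\in H^{4}(\mathbb S^{4})\). Concretely, on each \(\mathcal H_{k}\) with \(k\ge1\) the eigenvalue of \(P_{4}^{\mathbb S^{4}}\) is comparable to \((1+k)^{4}\). The \(\mathcal H_{0}\) component is trivially smooth, so the spectral characterization of \(H^{4}\) applies. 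Further bootstrapping, using that \(\mathcal R_{\alpha}\) gains \(4-\alpha\) derivatives, would give smoothness, but \(H^{4}\) is all that is claimed.
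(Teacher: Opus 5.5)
Your proposal is correct and follows the paper's proof essentially step for step. You pair the defect from Lemma~\ref{lem:pole-defect} with the kernel elements $\xi_1,\ldots,\xi_5$ via $\langle T,Y\rangle=\langle\Phi,\mathcal L_{\alpha}Y\rangle=0$, use their $1$-jets at $p$ to kill $c_0,\ldots,c_4$, and then apply elliptic regularity for $P_4^{\mathbb S^4}$ to get $\Phi\in H^4$. The only slip is the announced test against the constant function, which you never use and which would not vanish anyway, since $\mathcal L_{\alpha}1=\Lambda_0(\alpha)\neq0$.
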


\begin{proof}
Lemma~\ref{lem:weak-conformal-transfer} gives \(\Phi\in L^{2}(\mathbb S^{4})\)
and \eqref{eq:sphere-away-pole}. Lemma~\ref{lem:pole-defect} then gives
\[
T:=\mathcal L_{\alpha}\Phi
=c_{0}\delta_{p}+\sum_{j=1}^{4}c_{j}\partial_{z_{j}}\delta_{p}.
\]
For every \(Y\in\mathcal H_{1}\), equations \eqref{eq:Paneitz-eigen},
\eqref{eq:mu0-mu1}, \eqref{eq:C-alpha} and \eqref{eq:kappa-alpha} give
\[
\mathcal L_{\alpha}Y
=\left[24-\kappa_{\alpha}
\bigl(\mu_{1}(\alpha)+\mu_{0}(\alpha)\bigr)\right]Y=0.
\]
Since \(\Phi\in L^{2}\), distributional symmetry is legitimate and
\begin{equation}\label{eq:defect-pairing}
\langle T,Y\rangle
=\langle\Phi,\mathcal L_{\alpha}Y\rangle=0.
\end{equation}
Take \(Y=\xi_{5}\). At \(p=(0,0,0,0,-1)\), one has
\(\xi_{5}(p)=-1\) and \(\nabla_{\mathbb S^{4}}\xi_{5}(p)=0\), so
\eqref{eq:defect-pairing} gives \(c_{0}=0\). Next take
\(Y=\xi_{j}\), \(1\leq j\leq4\). These functions vanish at \(p\), while
\(\nabla_{\mathbb S^{4}}\xi_{1}(p),\ldots,
\nabla_{\mathbb S^{4}}\xi_{4}(p)\) form a basis of
\(T_{p}\mathbb S^{4}\). Since
\(\langle\partial_{z_j}\delta_p,Y\rangle=-\partial_{z_j}Y(p)\), the resulting
linear system is nonsingular. Thus \(c_{1}=\cdots=c_{4}=0\), and \(T=0\).
Finally,
\[
P_{4}^{\mathbb S^{4}}\Phi
=\kappa_{\alpha}
\bigl(\mathcal R_{\alpha}\Phi+\mu_{0}(\alpha)\Phi\bigr)
\in L^{2}(\mathbb S^{4}).
\]
Elliptic regularity on the compact sphere gives
\(\Phi\in H^{4}(\mathbb S^{4})\).
\end{proof}

\section{Complete spectrum, Morse index and coercivity}

\begin{theorem}\label{thm:spectrum-proof}
The spectrum of \(\mathcal L_{\alpha}\) is
\begin{equation*}
\operatorname{spec}(\mathcal L_{\alpha})
=\{\Lambda_{k}(\alpha):k=0,1,2,\ldots\},
\end{equation*}
where \(\Lambda_{k}(\alpha)\) is given by \eqref{eq:Lambda-k-intro}, with multiplicity \(\dim\mathcal H_{k}\). Furthermore,
\begin{equation*}
\Lambda_{0}(\alpha)=-6(8-\alpha),
\end{equation*}
\begin{equation}\label{eq:Lambda1}
\Lambda_{1}(\alpha)=0,
\end{equation}
and
\begin{equation}\label{eq:Lambda-high}
\Lambda_{k}(\alpha)
\geq\frac45 k(k+1)(k+2)(k+3),
\qquad k\geq2.
\end{equation}
\end{theorem}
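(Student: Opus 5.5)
The plan is to diagonalize \(\mathcal L_{\alpha}\) simultaneously on the orthogonal decomposition \(L^{2}(\mathbb S^{4})=\bigoplus_{k}\mathcal H_{k}\). On \(\mathcal H_{k}\), the Paneitz operator acts by \(P_{k}:=k(k+1)(k+2)(k+3)\) by \eqref{eq:Paneitz-eigen}. The Riesz operator acts by \(\mu_{k}(\alpha)\) by Lemma~\ref{lem:Funk-Hecke} and Proposition~\ref{prop:Riesz-operator}. Hence every \(Y\in\mathcal H_{k}\) satisfies \(\mathcal L_{\alpha}Y=\Lambda_{k}(\alpha)Y\), with \(\Lambda_{k}\) as in \eqref{eq:Lambda-k-intro}.

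For the operator-theoretic part, \(P_{4}^{\mathbb S^{4}}\) with domain \(H^{4}(\mathbb S^{4})\) is self-adjoint, and \(\kappa_{\alpha}(\mathcal R_{\alpha}+\mu_{0}I)\) is bounded and symmetric. Kato--Rellich therefore gives self-adjointness of \(\mathcal L_{\alpha}\) on the same domain. The family \(\{\mathcal H_{k}\}\) supplies an orthonormal eigenbasis. The eigenvalues satisfy \(\Lambda_{k}\geq P_{k}-2\kappa_{\alpha}\mu_{0}\to\infty\), since \(\mu_{k}\leq\mu_{0}\). So the resolvent is diagonal with entries tending to zero, hence compact. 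For \(\lambda\notin\{\Lambda_{k}\}\), the inverse of \(\mathcal L_{\alpha}-\lambda\) is the bounded diagonal operator, so the spectrum is exactly \(\{\Lambda_{k}\}\) with multiplicities \(\dim\mathcal H_{k}\).

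The explicit values come from \eqref{eq:kappa-identities} and the ratios \eqref{eq:mu-monotone}. The first identity gives \(\kappa_{\alpha}\mu_{0}=3(8-\alpha)\), so \(\Lambda_{0}=-2\kappa_{\alpha}\mu_{0}=-6(8-\alpha)\). Next, \(\mu_{1}/\mu_{0}=\alpha/(8-\alpha)\) gives \(\kappa_{\alpha}\mu_{1}=3\alpha\). Therefore \(\Lambda_{1}=24-3(8-\alpha)-3\alpha=0\).

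For \(k\geq2\), \eqref{eq:mu-monotone} shows \(\mu_{k}\leq\mu_{2}\). Moreover \(\mu_{2}/\mu_{0}=\alpha(\alpha+2)/((8-\alpha)(10-\alpha))\), so
\[
\kappa_{\alpha}(\mu_{k}+\mu_{0})\leq\kappa_{\alpha}(\mu_{2}+\mu_{0})
=\frac{3(80-16\alpha+2\alpha^{2})}{10-\alpha}.
\]
This quantity is at most \(24\) if and only if \(6\alpha^{2}\leq24\alpha\), i.e. \(\alpha\leq4\), which holds for \(0<\alpha<4\). Since \(P_{k}\geq P_{2}=120\) for \(k\geq2\), we get \(\kappa_{\alpha}(\mu_{k}+\mu_{0})\leq24\leq P_{k}/5\). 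This yields \(\Lambda_{k}\geq\frac45P_{k}\), which is \eqref{eq:Lambda-high}.

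The only delicate point is this uniform bound on the Riesz contribution: the reduction to the single case \(k=2\) via the monotonicity \eqref{eq:mu-monotone}, and the quadratic inequality in \(\alpha\). At \(\alpha=0\) the bound is sharp, with \(\kappa_{\alpha}(\mu_{2}+\mu_{0})=24\) exactly, so there is no slack at that endpoint. Every other step is a routine consequence of the diagonalization.
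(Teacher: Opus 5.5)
Your proof is correct and is essentially the paper's argument. You diagonalize on \(\bigoplus_k\mathcal H_k\), use \(\kappa_\alpha\mu_0(\alpha)=3(8-\alpha)\) to get \(\Lambda_0\) and \(\Lambda_1\), and for \(k\ge2\) you bound the Riesz term by \(24\le\lambda_k/5\), where \(\lambda_k=k(k+1)(k+2)(k+3)\) is your \(P_k\).

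One step is missing. The claim that \(\Lambda_k\) has multiplicity exactly \(\dim\mathcal H_k\) requires the \(\Lambda_k\) to be pairwise distinct. Your bounds only give \(\Lambda_0<\Lambda_1<\Lambda_k\) for \(k\ge2\), not distinctness among the \(\Lambda_k\) with \(k\ge2\). Add, as the paper does, that \(\Lambda_{k+1}-\Lambda_k=(\lambda_{k+1}-\lambda_k)+\kappa_\alpha(\mu_k-\mu_{k+1})>0\) by \eqref{eq:mu-monotone}.

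Also, the step you call delicate is immediate. For \(k\ge2\), \(\mu_k<\mu_1\) gives \(\kappa_\alpha(\mu_k+\mu_0)<\kappa_\alpha(\mu_1+\mu_0)=24\) directly from \(\Lambda_1=0\). Your computation of \(\kappa_\alpha(\mu_2+\mu_0)\) and the inequality \(\alpha(\alpha-4)\le0\) are correct but not needed for this theorem; that explicit value only matters later, for \(\gamma_\alpha\).
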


\begin{proof}
Proposition~\ref{prop:Riesz-operator} and \eqref{eq:Paneitz-eigen} show that \(P_{4}^{\mathbb S^{4}}\) and \(\mathcal R_{\alpha}\) are simultaneously diagonal in the spherical harmonic decomposition. Hence every \(\mathcal H_{k}\) is an eigenspace with eigenvalue \eqref{eq:Lambda-k-intro}. Since \(\mathcal R_{\alpha}\) is bounded and self-adjoint, \(\mathcal L_{\alpha}\) is self-adjoint on the domain of \(P_{4}^{\mathbb S^{4}}\), namely \(H^{4}(\mathbb S^{4})\), and has compact resolvent. Moreover, if \(\lambda_k=k(k+1)(k+2)(k+3)\), then
\[
\Lambda_{k+1}(\alpha)-\Lambda_k(\alpha)
=(\lambda_{k+1}-\lambda_k)
+\kappa_{\alpha}(\mu_k(\alpha)-\mu_{k+1}(\alpha))>0.
\]
Thus the sequence \(\Lambda_k(\alpha)\) is strictly increasing, and the
multiplicity of \(\Lambda_k(\alpha)\) is exactly \(\dim\mathcal H_k\).

For \(k=0\), formulas \eqref{eq:C-alpha}, \eqref{eq:kappa-alpha} and \eqref{eq:mu0-mu1} give
\[
\kappa_{\alpha}\mu_{0}(\alpha)=3(8-\alpha),
\]
and hence \(\Lambda_{0}=-2\kappa_{\alpha}\mu_{0}=-6(8-\alpha)\).
For \(k=1\),
\[
\mu_{1}(\alpha)+\mu_{0}(\alpha)
=\frac{2^{9-\alpha}\pi^{2}}
{(8-\alpha)(6-\alpha)(4-\alpha)},
\]
so
\[
\kappa_{\alpha}
\bigl(\mu_{1}(\alpha)+\mu_{0}(\alpha)\bigr)=24.
\]
Since the Paneitz eigenvalue on \(\mathcal H_{1}\) is also \(24\), this proves \eqref{eq:Lambda1}.

Let \(k\geq2\). By the strict monotonicity \eqref{eq:mu-monotone},
\[
\kappa_{\alpha}
\bigl(\mu_{k}(\alpha)+\mu_{0}(\alpha)\bigr)
<\kappa_{\alpha}
\bigl(\mu_{1}(\alpha)+\mu_{0}(\alpha)\bigr)=24.
\]
Since \(k(k+1)(k+2)(k+3)\geq120\),
\[
\kappa_{\alpha}
\bigl(\mu_{k}(\alpha)+\mu_{0}(\alpha)\bigr)
\leq\frac15 k(k+1)(k+2)(k+3).
\]
This proves \eqref{eq:Lambda-high}.
\end{proof}

\begin{proof}[Proof of Theorem~\ref{thm:full-spectrum}]
The spectral formula and the signs of the eigenvalues follow from Theorem~\ref{thm:spectrum-proof}. Thus \(\mathcal H_{0}\) is the unique negative eigenspace, \(\mathcal H_{1}\) is the kernel, and all higher modes are positive. This proves Morse index one and nullity five.

Let
\[
\Phi=\sum_{k=2}^{\infty}\Phi_{k},
\qquad \Phi_{k}\in\mathcal H_{k}.
\]
Then Theorem~\ref{thm:spectrum-proof} gives
\[
\begin{aligned}
\langle\mathcal L_{\alpha}\Phi,\Phi\rangle
=\sum_{k=2}^{\infty}
\Lambda_{k}(\alpha)\|\Phi_{k}\|_{2}^{2}\geq\frac45
\sum_{k=2}^{\infty}
k(k+1)(k+2)(k+3)\|\Phi_{k}\|_{2}^{2}=\frac45
\langle P_{4}^{\mathbb S^{4}}\Phi,\Phi\rangle.
\end{aligned}
\]
This is \eqref{eq:coercivity-intro}.

On \(\mathcal H_{1}^{\perp}\), zero is separated from the spectrum:
\(|\Lambda_{0}(\alpha)|\geq24\), and
\(\Lambda_{k}(\alpha)\geq96\) for \(k\geq2\). Moreover,
\(\mu_k(\alpha)\leq\mu_0(\alpha)\) and
\(\kappa_\alpha\mu_0(\alpha)=3(8-\alpha)\), so
\[
0\leq\kappa_\alpha\bigl(\mu_k(\alpha)+\mu_0(\alpha)\bigr)
\leq48
\qquad\hbox{for all }k\geq0,\ 0<\alpha<4.
\]
Consequently, for \(k\geq2\), \(\Lambda_k(\alpha)\) is bounded above and below
by fixed positive multiples of
\(k(k+1)(k+2)(k+3)\), independently of \(\alpha\). Division by the
eigenvalues in the spherical harmonic expansion therefore gives
\[
\|\Phi\|_{H^{4}(\mathbb S^{4})}
\leq C\|\mathcal L_{\alpha}\Phi\|_{L^{2}(\mathbb S^{4})},
\qquad \Phi\perp_{L^{2}}\mathcal H_{1},
\]
with \(C\) independent of \(0<\alpha<4\), after fixing an equivalent spectral
norm on \(H^{4}(\mathbb S^{4})\).
\end{proof}

\begin{proof}[Proof of Corollary~\ref{cor:euclidean-nondegeneracy}]
Let \(\Phi=S_{*}\phi\). Proposition~\ref{prop:global-sphere-equation} gives \(\Phi\in H^{4}(\mathbb S^{4})\) and \(\mathcal L_{\alpha}\Phi=0\). Theorem~\ref{thm:full-spectrum} yields
\[
\Phi=\sum_{j=1}^{5}b_{j}\xi_{j}.
\]
From \eqref{eq:stereo} and \eqref{eq:kernel-functions}--\eqref{eq:kernel-scale},
\[
S_{*}\phi_{j}=\frac{8-\alpha}{2}\xi_{j},
\qquad 1\leq j\leq5.
\]
Therefore \(\phi=\sum_{j=1}^{5}a_{j}\phi_{j}\). In particular
\(\phi\) is smooth and bounded and belongs to the weighted energy class
\[
H^{2}_{w}(\mathbb R^{4})
:=\left\{\phi\in L^{2}_{w}(\mathbb R^{4}):\Delta\phi\in L^{2}(\mathbb R^{4})\right\}.
\]
Translation and dilation give the statement at \(U_{\mu,\zeta}\).

The Morse index statement is made on the form domain
\[
S^{*}H^{2}(\mathbb S^{4})
:=\{\phi=S^{*}\Phi:\Phi\in H^{2}(\mathbb S^{4})\}.
\]
On this domain, Proposition~\ref{prop:conformal-linearized} and
Theorem~\ref{thm:full-spectrum} show that \(\mathcal Q_U\) has one negative
direction, corresponding to constants on \(\mathbb S^{4}\), and kernel
generated by the five symmetry modes. For every \(\phi=S^{*}\Phi\) with
\(\Phi\perp_{L^{2}}\mathcal H_{0}\oplus\mathcal H_{1}\), one has
\begin{equation}\label{eq:Euclidean-coercivity}
\mathcal Q_{U}(\phi)
\geq\frac45\int_{\mathbb R^{4}}|\Delta\phi|^{2}\,dx.
\end{equation}
Equivalently, the orthogonality conditions may be written as
\[
\int_{\mathbb R^{4}}\phi(x)\omega(x)^{4}\,dx=0,
\qquad
\int_{\mathbb R^{4}}\phi(x)\phi_{j}(x)\omega(x)^{4}\,dx=0,
\quad 1\leq j\leq5.
\]
\end{proof}

\begin{corollary}\label{cor:nondegenerate-all-bubbles}
For every \(\mu>0\) and \(\zeta\in\mathbb R^{4}\), the kernel of the linearized operator at \(U_{\mu,\zeta}\) in
\[
L^{2}_{w,\mu,\zeta}(\mathbb R^{4})
:=\left\{\phi:
\int_{\mathbb R^{4}}
\frac{|\phi(x)|^{2}\mu^{4}}
{(1+\mu^{2}|x-\zeta|^{2})^{4}}\,dx<+\infty
\right\}
\]
is spanned by
\[
\frac{\partial U_{\mu,\zeta}}{\partial\zeta_{1}},\ldots,
\frac{\partial U_{\mu,\zeta}}{\partial\zeta_{4}},
\frac{\partial U_{\mu,\zeta}}{\partial\mu}.
\]
On the form domain obtained from \(S^{*}H^{2}(\mathbb S^{4})\) by the same
translation and dilation, the quadratic form has Morse index one. The
coercivity estimate obtained from \eqref{eq:Euclidean-coercivity} is invariant
under these transformations.
\end{corollary}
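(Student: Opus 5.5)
The plan is to reduce everything to the normalized bubble \(U=U_{1,0}\) by the scaling map and then apply Corollary~\ref{cor:euclidean-nondegeneracy}. For \(\mu>0\) and \(\zeta\in\mathbb R^{4}\), set
\[
T_{\mu,\zeta}\psi(x):=\psi(\mu(x-\zeta)).
\]
By the invariance \eqref{eq:invariance} with \(\delta=\mu\), we have \(U_{\mu,\zeta}(x)=U(\mu(x-\zeta))+\frac{8-\alpha}{2}\log\mu\).

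\textbf{Step 1: the operators are conjugate.} Write \(\mathscr L_{\mu,\zeta}\) for the linearized distribution at \(U_{\mu,\zeta}\). We have \(e^{U_{\mu,\zeta}}=\mu^{(8-\alpha)/2}T_{\mu,\zeta}e^{U}\). In the Riesz integral we change variables \(y=\zeta+y'/\mu\), which produces the factor \(\mu^{-4+\alpha}\). Multiplying the two factors \(\mu^{(8-\alpha)/2}\) from the exponentials by \(\mu^{\alpha-4}\) gives exactly \(\mu^{4}\). Since also \(\Delta^{2}T_{\mu,\zeta}\psi=\mu^{4}T_{\mu,\zeta}\Delta^{2}\psi\), we obtain
\[
\mathscr L_{\mu,\zeta}(T_{\mu,\zeta}\psi)=\mu^{4}\,T_{\mu,\zeta}(\mathscr L_{U}\psi).
\]
This holds distributionally by testing against \(T_{\mu,\zeta}\chi\). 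The same change of variables shows that \(T_{\mu,\zeta}\) maps \(L^{2}_{w}\) bijectively onto \(L^{2}_{w,\mu,\zeta}\), with
\[
\int\frac{|T_{\mu,\zeta}\psi|^{2}\mu^{4}}{(1+\mu^{2}|x-\zeta|^{2})^{4}}\,dx=\int\frac{|\psi|^{2}}{(1+|x|^{2})^{4}}\,dx .
\]
This is the reason for the factor \(\mu^{4}\) in the definition of the weighted space.

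\textbf{Step 2: identify the kernel.} If \(\phi\in L^{2}_{w,\mu,\zeta}\) solves the linearized equation at \(U_{\mu,\zeta}\), then \(\psi=T_{\mu,\zeta}^{-1}\phi\in L^{2}_{w}\) solves \eqref{eq:linearized}. Corollary~\ref{cor:euclidean-nondegeneracy} then gives \(\psi=\sum_{j=1}^{5}a_{j}\phi_{j}\).

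It remains to check that \(T_{\mu,\zeta}\phi_{j}\) spans the same space as the five derivatives. A direct differentiation gives
\[
\partial_{\zeta_{j}}U_{\mu,\zeta}=(8-\alpha)\frac{\mu^{2}(x-\zeta)_{j}}{1+\mu^{2}|x-\zeta|^{2}}=\mu\,T_{\mu,\zeta}\phi_{j},\qquad 1\leq j\leq 4,
\]
and
\[
\partial_{\mu}U_{\mu,\zeta}=\frac{8-\alpha}{2}\,\frac{1}{\mu}\,\frac{1-\mu^{2}|x-\zeta|^{2}}{1+\mu^{2}|x-\zeta|^{2}}=\mu^{-1}T_{\mu,\zeta}\phi_{5}.
\]
Since \(\mu\neq0\), the two spans coincide.

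\textbf{Step 3: Morse index and coercivity.} Let \(\mathcal Q_{\mu,\zeta}\) be the quadratic form at \(U_{\mu,\zeta}\). The same scaling gives \(\mathcal Q_{\mu,\zeta}(T_{\mu,\zeta}\psi)=\mathcal Q_{U}(\psi)\). For the Paneitz term, \(\int|\Delta T_{\mu,\zeta}\psi|^{2}=\mu^{4}\mu^{-4}\int|\Delta\psi|^{2}\), and the two nonlocal terms carry the factor \(\mu^{4-\alpha}\mu^{\alpha-8}\mu^{4}=1\).

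Accordingly, the form domain \(T_{\mu,\zeta}S^{*}H^{2}(\mathbb S^{4})\) carries a form unitarily equivalent to \(\mathcal Q_{U}\) on \(S^{*}H^{2}(\mathbb S^{4})\). The Morse index is therefore one, by Corollary~\ref{cor:euclidean-nondegeneracy}.

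For coercivity, the orthogonality conditions with weight \(\omega^{4}\) transform into conditions with weight \(\mu^{4}\omega(\mu(x-\zeta))^{4}\). These are tested against \(1\) and \(T_{\mu,\zeta}\phi_{j}\). Because both sides of \eqref{eq:Euclidean-coercivity} are invariant under \(T_{\mu,\zeta}\), the estimate transfers unchanged, with the same constant \(4/5\).

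\textbf{Main obstacle.} The only delicate point is the exact exponent bookkeeping in Step 1: checking that the Riesz term scales exactly like \(\Delta^{2}\). This relies on the specific shift \(\frac{8-\alpha}{2}\log\mu\). The distributional version is routine after a change of variables in the test function.
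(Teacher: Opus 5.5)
Your proposal is correct and follows the paper's proof. The paper simply applies Corollary~\ref{cor:euclidean-nondegeneracy} after the change of variables \(x\mapsto\mu(x-\zeta)\), noting that the weighted space, kernel directions, quadratic form and orthogonality conditions are all transported by \eqref{eq:invariance}; you supply the scaling bookkeeping the paper leaves implicit (the factor \(\mu^{4}\) in the conjugation identity, the weighted-space isometry, and \(\partial_{\zeta_j}U_{\mu,\zeta}=\mu\,T_{\mu,\zeta}\phi_j\), \(\partial_\mu U_{\mu,\zeta}=\mu^{-1}T_{\mu,\zeta}\phi_5\)), and these computations check out.
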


\begin{proof}
Apply Corollary~\ref{cor:euclidean-nondegeneracy} after the change of variables \(x\mapsto\mu(x-\zeta)\). The weighted space, the kernel directions, the quadratic form and the orthogonality conditions are transported by the conformal symmetry \eqref{eq:invariance}.
\end{proof}

\section{Sharp Adams--Choquard inequality and local stability}\label{sec:stability}

We prove Corollary~\ref{cor:sharp-Adams-Choquard} and Theorem~\ref{thm:Adams-Choquard-stability}. The inequality itself follows from two sharp endpoint inequalities. The stability theorem uses the complete spectrum and a nearest-point modulation argument. Throughout this section, \(q_{\alpha}\) is defined by \eqref{eq:q-alpha}.

\subsection{The sharp inequality and equality cases}

We first derive the endpoint inequality and characterize all equality cases.

\begin{proposition}\label{prop:sharp-Adams-Choquard}
For every \(\Phi\in H^{2}(\mathbb S^{4})\), inequality \eqref{eq:sharp-Adams-Choquard} holds. Equivalently,
\begin{equation}\label{eq:deficit-nonnegative}
\mathcal D_{\alpha}(\Phi)\geq0.
\end{equation}
Equality in \eqref{eq:deficit-nonnegative} holds if and only if \(\Phi\in\mathcal M_{\alpha}\).
\end{proposition}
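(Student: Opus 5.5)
The plan is to compose the sharp spherical HLS inequality (Lemma~\ref{lem:spherical-HLS}) with Beckner's Adams inequality (Lemma~\ref{lem:Beckner-Adams}), and to read off the equality cases from the two equality classifications.

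\textbf{Step 1: spherical HLS.} Let \(\Phi\in H^{2}(\mathbb S^{4})\). By Beckner's inequality, \(e^{\Phi}\in L^{q}(\mathbb S^{4})\) for every \(q<\infty\), so \(f:=e^{\Phi}\) is an admissible nonnegative function in Lemma~\ref{lem:spherical-HLS}. This gives
\[
\log\frac{\mathcal I_{\alpha}(\Phi)}{|\mathbb S^{4}|\mu_{0}(\alpha)}
\leq\frac{2}{q_{\alpha}}\log\left(\frac{1}{|\mathbb S^{4}|}
\int_{\mathbb S^{4}}e^{q_{\alpha}\Phi}\,d\xi\right).
\]

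\textbf{Step 2: Beckner's inequality.} Apply Lemma~\ref{lem:Beckner-Adams} with \(W:=q_{\alpha}\Phi/4\). Then \(4W=q_{\alpha}\Phi\) and \(\overline W=q_\alpha\overline\Phi/4\), so
\[
\log\left(\frac{1}{|\mathbb S^{4}|}\int e^{q_\alpha\Phi}\right)
\leq q_\alpha\overline\Phi+\frac{q_\alpha^{2}}{128\pi^{2}}\int\Phi P_4^{\mathbb S^4}\Phi\,d\xi.
\]
Multiplying by \(2/q_\alpha\) produces the coefficient
\[
\frac{q_\alpha}{64\pi^{2}}=\frac{1}{8\pi^{2}(8-\alpha)}
\]
in front of \(\int\Phi P_4^{\mathbb S^4}\Phi\,d\xi\), together with \(2\overline\Phi\). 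This is exactly \eqref{eq:sharp-Adams-Choquard}. Multiplying \eqref{eq:sharp-Adams-Choquard} by \(4\pi^{2}(8-\alpha)\) and comparing with \eqref{eq:deficit} shows that it is equivalent to \(\mathcal D_\alpha(\Phi)\geq0\).

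\textbf{Step 3: equality cases.} Equality in the composed inequality forces equality in both steps, since each step is an inequality in the same direction.
\begin{itemize}
\item Equality in Beckner's inequality gives \(q_\alpha\Phi/4=c+\tfrac14\log J_a\), that is, \(\Phi=c'+\tfrac{8-\alpha}{8}\log J_a\), so \(\Phi\in\mathcal M_\alpha\).
\item Conversely, let \(\Phi=c+\tfrac{8-\alpha}{8}\log J_a\). Then Beckner's inequality is an equality. Moreover \(e^{\Phi}=e^{c}J_a^{1/q_\alpha}\), which is an HLS extremal by Lemma~\ref{lem:spherical-HLS}, so the HLS step is also an equality.
\end{itemize}
Hence \(\mathcal D_\alpha(\Phi)=0\) exactly on \(\mathcal M_\alpha\).

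\textbf{Expected obstacle.} The main delicate point is the bookkeeping of constants, namely checking that the two normalizations chain with no slack, including the identity \(2/q_\alpha\cdot q_\alpha^2/128\pi^2=1/(8\pi^2(8-\alpha))\). A secondary point is that equality in the composite inequality requires equality in both steps, which holds because the two inequalities are simply chained. The equality classifications themselves are imported from Lieb and Beckner.
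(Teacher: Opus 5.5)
Your proposal is correct and follows the paper's proof: it chains the spherical HLS inequality with Beckner's inequality for \(W=\frac{q_{\alpha}}{4}\Phi\) (the paper uses \(W=\frac{q_{\alpha}}{4}(\Phi-\overline{\Phi})\), which is equivalent), via the same identity \(\frac{2}{q_{\alpha}}\cdot\frac{q_{\alpha}^{2}}{128\pi^{2}}=\frac{1}{8\pi^{2}(8-\alpha)}\). Your equality argument is slightly leaner than the paper's, since Beckner's equality classification alone already forces \(\Phi\in\mathcal M_{\alpha}\), so the paper's step of reconciling the HLS and Beckner parameters is not needed.
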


\begin{proof}
Apply Lemma~\ref{lem:spherical-HLS} with \(f=e^{\Phi}\). We obtain
\begin{equation}\label{eq:HLS-log-step}
\log\frac{\mathcal I_{\alpha}(\Phi)}{|\mathbb S^{4}|\mu_{0}(\alpha)}
\leq\frac{2}{q_{\alpha}}
\log\left(\frac{1}{|\mathbb S^{4}|}
\int_{\mathbb S^{4}}e^{q_{\alpha}\Phi}\,d\xi\right).
\end{equation}
Separate the mean:
\begin{equation*}
\begin{aligned}
\frac{2}{q_{\alpha}}
\log\left(\frac{1}{|\mathbb S^{4}|}
\int_{\mathbb S^{4}}e^{q_{\alpha}\Phi}\,d\xi\right)
={}&2\overline\Phi+\frac{2}{q_{\alpha}}
\log\left(\frac{1}{|\mathbb S^{4}|}
\int_{\mathbb S^{4}}e^{q_{\alpha}(\Phi-\overline\Phi)}\,d\xi\right).
\end{aligned}
\end{equation*}
Use Lemma~\ref{lem:Beckner-Adams} with
\begin{equation*}
W:=\frac{q_{\alpha}}{4}(\Phi-\overline\Phi).
\end{equation*}
Since the Paneitz operator annihilates constants,
\begin{equation}\label{eq:Adams-step}
\log\left(\frac{1}{|\mathbb S^{4}|}
\int_{\mathbb S^{4}}e^{q_{\alpha}(\Phi-\overline\Phi)}\,d\xi\right)
\leq\frac{q_{\alpha}^{2}}{128\pi^{2}}
\int_{\mathbb S^{4}}\Phi P_{4}^{\mathbb S^{4}}\Phi\,d\xi.
\end{equation}
The identity
\begin{equation*}
\frac{2}{q_{\alpha}}\frac{q_{\alpha}^{2}}{128\pi^{2}}
=\frac{1}{8\pi^{2}(8-\alpha)}
\end{equation*}
proves \eqref{eq:sharp-Adams-Choquard}.

It remains to identify equality. Equality in \eqref{eq:HLS-log-step} requires
\begin{equation*}
e^{\Phi}=c_{1}J_{a}^{1/q_{\alpha}}.
\end{equation*}
Equality in \eqref{eq:Adams-step} requires
\begin{equation*}
\frac{q_{\alpha}}{4}(\Phi-\overline\Phi)
=c_{2}+\frac14\log J_{a}.
\end{equation*}
Since \(q_{\alpha}^{-1}=(8-\alpha)/8\), each condition places
\(\Phi\) in the family
\(c+(8-\alpha)\log J_a/8\). If the parameters obtained from the two
equality statements are initially different, then the corresponding
Jacobians differ by a positive constant. Their normalization
\(\int_{\mathbb S^{4}}J_a\,d\xi=|\mathbb S^{4}|\) forces that constant
to be one, and hence the parameters agree. Conversely, every function of this
form is an equality case in both sharp inequalities.
\end{proof}

The functional \(\mathcal D_{\alpha}\) is invariant under addition of constants. It is also invariant under the conformal action
\begin{equation*}
(T_{\tau}\Phi)(\xi)
:=\Phi(\tau(\xi))+\frac{8-\alpha}{8}\log J_{\tau}(\xi),
\end{equation*}
where \(\tau\) is a conformal diffeomorphism of \(\mathbb S^{4}\) and \(J_{\tau}\) is its Jacobian. Indeed, the sharp HLS quotient is invariant by \eqref{eq:HLS-weight-cancellation}. The Beckner functional is invariant under the corresponding action on \(W=q_{\alpha}\Phi/4\). Thus
\begin{equation}\label{eq:deficit-invariance}
\mathcal D_{\alpha}(T_{\tau}\Phi)=\mathcal D_{\alpha}(\Phi).
\end{equation}
The Paneitz seminorm of a difference is invariant under composition with \(\tau\):
\begin{equation}\label{eq:Paneitz-invariance}
\int_{\mathbb S^{4}}((\Phi-\Psi)\circ\tau)
P_{4}^{\mathbb S^{4}}((\Phi-\Psi)\circ\tau)\,d\xi
=\int_{\mathbb S^{4}}(\Phi-\Psi)
P_{4}^{\mathbb S^{4}}(\Phi-\Psi)\,d\xi.
\end{equation}

\subsection{Quadratic expansion of the deficit}

For \(f,g\in L^{q_{\alpha}}(\mathbb S^{4})\), set
\begin{equation*}
\mathcal B_{\alpha}(f,g)
:=\int_{\mathbb S^{4}}\int_{\mathbb S^{4}}
\frac{f(\xi)g(\eta)}{|\xi-\eta|^{\alpha}}\,d\xi d\eta.
\end{equation*}
Lemma~\ref{lem:spherical-HLS} and polarization give
\begin{equation}\label{eq:B-alpha-bound}
|\mathcal B_{\alpha}(f,g)|
\leq C(\alpha)\|f\|_{L^{q_{\alpha}}(\mathbb S^{4})}
\|g\|_{L^{q_{\alpha}}(\mathbb S^{4})}.
\end{equation}

We next expand the deficit near the normalized extremal.

\begin{lemma}\label{lem:deficit-expansion}
Let \(K\Subset(0,4)\). There exist \(\rho_{K}>0\) and \(C_{K}>0\) such that,
for every \(\alpha\in K\) and every \(\Psi\in H^{2}(\mathbb S^{4})\) satisfying
\begin{equation*}
\Psi\perp_{L^{2}}\mathcal H_{0},
\qquad
\|\Psi\|_{H^{2}(\mathbb S^{4})}\leq\rho_{K},
\end{equation*}
one has
\begin{equation}\label{eq:deficit-expansion}
\left|\mathcal D_{\alpha}(\Psi)
-\frac12\langle\mathcal L_{\alpha}\Psi,\Psi\rangle_{L^{2}(\mathbb S^{4})}\right|
\leq C_{K}\|\Psi\|_{H^{2}(\mathbb S^{4})}^{3}.
\end{equation}
\end{lemma}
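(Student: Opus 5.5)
Since \(\overline\Psi=0\), the deficit reduces to
\[
\mathcal D_{\alpha}(\Psi)=\tfrac12\langle P_{4}^{\mathbb S^{4}}\Psi,\Psi\rangle-4\pi^{2}(8-\alpha)\log\frac{\mathcal I_{\alpha}(\Psi)}{|\mathbb S^{4}|\mu_{0}(\alpha)},
\]
so the proof amounts to a third-order Taylor expansion of the logarithmic term. Write \(g(\Psi):=\mathcal I_{\alpha}(\Psi)/(|\mathbb S^{4}|\mu_{0}(\alpha))\), so that \(g(0)=1\). We expand \(e^{\Psi}=1+\Psi+\tfrac12\Psi^{2}+r(\Psi)\) with \(|r(\Psi)|\le\tfrac16|\Psi|^{3}e^{|\Psi|}\). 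Using the bilinear form \(\mathcal B_{\alpha}\) and \(\mathcal R_{\alpha}1=\mu_{0}(\alpha)\), we get
\[
\mathcal I_{\alpha}(\Psi)=|\mathbb S^{4}|\mu_{0}+2\mu_{0}\!\int\Psi+\langle\mathcal R_{\alpha}\Psi,\Psi\rangle+\mu_{0}\!\int\Psi^{2}+E,
\]
where \(E\) collects all terms of total order at least three. The linear term vanishes because \(\Psi\perp\mathcal H_{0}\). Hence
\[
g-1=\frac{\langle\mathcal R_{\alpha}\Psi,\Psi\rangle+\mu_{0}\|\Psi\|_{2}^{2}}{|\mathbb S^{4}|\mu_{0}}+\frac{E}{|\mathbb S^{4}|\mu_{0}}=O(\|\Psi\|_{H^{2}}^{2}).
\]
Then \(\log g=(g-1)+O(|g-1|^{2})\), and the square of \(g-1\) is \(O(\|\Psi\|^{4})\). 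Multiplying by \(4\pi^{2}(8-\alpha)\) and using \eqref{eq:kappa-identities}, namely \(4\pi^{2}(8-\alpha)/(|\mathbb S^{4}|\mu_{0})=\kappa_{\alpha}/2\), the quadratic part becomes exactly \(\tfrac{\kappa_{\alpha}}2(\langle\mathcal R_{\alpha}\Psi,\Psi\rangle+\mu_{0}\|\Psi\|_{2}^{2})\). Combined with \(\tfrac12\langle P_{4}\Psi,\Psi\rangle\), this gives \(\tfrac12\langle\mathcal L_{\alpha}\Psi,\Psi\rangle\).

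The main step is the bound \(|E|\le C_{K}\|\Psi\|_{H^{2}}^{3}\). The cubic terms are \(\mathcal B_{\alpha}(\Psi,\Psi^{2})\), together with the analogous terms involving \(1\) and \(r(\Psi)\), plus the quartic and higher remainders. We bound each with \eqref{eq:B-alpha-bound}, or with Schur's bound \(\|\mathcal R_{\alpha}\|_{L^{1}\to L^{1}}\le\mu_{0}\) when one entry is \(1\). This reduces everything to controlling \(\|\Psi^{j}e^{|\Psi|}\|_{L^{q}}\) for a fixed finite \(q\). By Hölder,
\[
\|\Psi^{3}e^{|\Psi|}\|_{L^{q}}\le\|\Psi\|_{L^{6q}}^{3}\,\|e^{|\Psi|}\|_{L^{2q}}.
\]
The Sobolev embedding \(H^{2}(\mathbb S^{4})\subset L^{s}\) for all finite \(s\) gives \(\|\Psi\|_{L^{6q}}\le C\|\Psi\|_{H^{2}}\). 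For the exponential factor, Beckner's inequality (Lemma~\ref{lem:Beckner-Adams}) applied to \(W=\pm 2q\Psi/4\), together with \(e^{|\Psi|}\le e^{\Psi}+e^{-\Psi}\) and \(\overline\Psi=0\), gives \(\int e^{2q|\Psi|}\le 2|\mathbb S^{4}|\exp(Cq^{2}\|\Psi\|_{H^{2}}^{2})\). This is bounded once \(\|\Psi\|_{H^{2}}\le\rho_{K}\).

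Uniformity for \(\alpha\in K\) follows because the constants \(C(\alpha)\) in \eqref{eq:B-alpha-bound}, \(\mu_{0}(\alpha)\), \(\kappa_{\alpha}\) and \(q_{\alpha}\) depend continuously on \(\alpha\) and stay bounded on the compact set \(K\subset(0,4)\). We also choose \(\rho_{K}\) small enough that \(|g-1|\le\tfrac12\), so that the Taylor expansion of \(\log\) applies with a uniform remainder. The only delicate point is making sure that every remainder term carries at least three factors of \(\Psi\) with the exponential weight placed in a single factor. Once this bookkeeping is done, the estimate \eqref{eq:deficit-expansion} follows.
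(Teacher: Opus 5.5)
Your proposal is correct and follows essentially the same route as the paper. You expand \(e^{\Psi}\) to second order, control the cubic remainder via H\"older, Sobolev embedding and Beckner's inequality applied to \(\pm\) multiples of \(\Psi\), expand the bilinear Riesz form using \(\mathcal R_{\alpha}1=\mu_{0}(\alpha)\), the mean-zero condition and the bound \eqref{eq:B-alpha-bound}, and finish with \(\log(1+t)=t+O(t^{2})\) and \eqref{eq:kappa-identities}; the only differences are cosmetic (your \(L^{2q}\)--\(L^{2q}\) H\"older split and optional Schur bound versus the paper's exponents \(q_{\alpha}^{-1}=r^{-1}+s_{\alpha}^{-1}\)).
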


\begin{proof}
Let \(q_{+}:=\max_{\alpha\in K}q_{\alpha}\). Fix \(r>q_{+}\). Apply Lemma
\ref{lem:Beckner-Adams} to \(W=\pm r\Psi/4\). Since
\(e^{r|\Psi|}\leq e^{r\Psi}+e^{-r\Psi}\), there is \(C=C(K,\rho_K)\) such that
\begin{equation}\label{eq:uniform-exponential}
\sup_{\substack{\Psi\perp_{L^{2}}\mathcal H_{0}\\
\|\Psi\|_{H^{2}}\leq\rho_K}}
\|e^{|\Psi|}\|_{L^{r}(\mathbb S^{4})}\leq C.
\end{equation}
The embedding \(H^{2}(\mathbb S^{4})\hookrightarrow L^{s}(\mathbb S^{4})\)
holds for every finite \(s\). Define
\begin{equation*}
R_{\Psi}:=e^{\Psi}-1-\Psi-\frac12\Psi^{2}.
\end{equation*}
The pointwise estimate
\[
|R_{\Psi}|\leq C|\Psi|^{3}e^{|\Psi|}
\]
together with H\"older's inequality and \eqref{eq:uniform-exponential} gives,
uniformly for \(\alpha\in K\), the following bound. Indeed, because
\(r>q_\alpha\), one may choose a finite exponent \(s_\alpha\), uniformly
bounded for \(\alpha\in K\), such that
\(q_\alpha^{-1}=r^{-1}+s_\alpha^{-1}\), and then use the embedding into
\(L^{3s_\alpha}\):
\begin{equation}\label{eq:exp-remainder}
\|R_{\Psi}\|_{L^{q_{\alpha}}(\mathbb S^{4})}
\leq C_{K}\|\Psi\|_{H^{2}(\mathbb S^{4})}^{3}.
\end{equation}

We now expand every term in the nonlocal energy. By symmetry of
\(\mathcal B_{\alpha}\),
\begin{equation}\label{eq:full-B-expansion}
\begin{aligned}
\mathcal I_{\alpha}(\Psi)
={}&\mathcal B_{\alpha}(1,1)
+2\mathcal B_{\alpha}(1,\Psi)
+\mathcal B_{\alpha}(\Psi,\Psi)
+\mathcal B_{\alpha}(1,\Psi^{2})\\
&+2\mathcal B_{\alpha}(1,R_{\Psi})
+\mathcal B_{\alpha}(\Psi,\Psi^{2})
+2\mathcal B_{\alpha}(\Psi,R_{\Psi})\\
&+\frac14\mathcal B_{\alpha}(\Psi^{2},\Psi^{2})
+\mathcal B_{\alpha}(\Psi^{2},R_{\Psi})
+\mathcal B_{\alpha}(R_{\Psi},R_{\Psi}).
\end{aligned}
\end{equation}
Since \(\mathcal R_{\alpha}1=\mu_{0}(\alpha)\),
\[
\mathcal B_{\alpha}(1,1)=|\mathbb S^{4}|\mu_{0}(\alpha),
\qquad
\mathcal B_{\alpha}(1,\Psi)=0,
\qquad
\mathcal B_{\alpha}(1,\Psi^{2})=\mu_{0}(\alpha)\|\Psi\|_{2}^{2}.
\]
The remaining six terms in \eqref{eq:full-B-expansion} are estimated separately
from \eqref{eq:B-alpha-bound}, \eqref{eq:exp-remainder} and the Sobolev
embeddings:
\begin{equation}\label{eq:remainder-B-estimates}
\begin{aligned}
|\mathcal B_{\alpha}(1,R_{\Psi})|
&\leq C_K\|\Psi\|_{H^{2}}^{3},\\
|\mathcal B_{\alpha}(\Psi,\Psi^{2})|
&\leq C_K\|\Psi\|_{H^{2}}^{3},\\
|\mathcal B_{\alpha}(\Psi,R_{\Psi})|
&\leq C_K\|\Psi\|_{H^{2}}^{4},\\
|\mathcal B_{\alpha}(\Psi^{2},\Psi^{2})|
&\leq C_K\|\Psi\|_{H^{2}}^{4},\\
|\mathcal B_{\alpha}(\Psi^{2},R_{\Psi})|
&\leq C_K\|\Psi\|_{H^{2}}^{5},\\
|\mathcal B_{\alpha}(R_{\Psi},R_{\Psi})|
&\leq C_K\|\Psi\|_{H^{2}}^{6}.
\end{aligned}
\end{equation}
After decreasing \(\rho_K\) if necessary, \eqref{eq:full-B-expansion} and
\eqref{eq:remainder-B-estimates} yield
\begin{equation}\label{eq:I-expansion}
\mathcal I_{\alpha}(\Psi)
=|\mathbb S^{4}|\mu_{0}(\alpha)
+\mu_{0}(\alpha)\|\Psi\|_{2}^{2}
+\langle\mathcal R_{\alpha}\Psi,\Psi\rangle
+\mathcal E_{\alpha}(\Psi),
\end{equation}
where
\begin{equation*}
|\mathcal E_{\alpha}(\Psi)|
\leq C_K\|\Psi\|_{H^{2}}^{3}.
\end{equation*}
The quantities \(\mu_0(\alpha)\) are bounded above and away from zero on
\(K\). Hence the perturbation of
\(|\mathbb S^{4}|\mu_0(\alpha)\) in \eqref{eq:I-expansion} is
\(O_K(\|\Psi\|_{H^{2}}^{2})\). Using
\(\log(1+t)=t+O(t^{2})\), we obtain
\begin{equation}\label{eq:log-I-expansion}
\log\frac{\mathcal I_{\alpha}(\Psi)}
{|\mathbb S^{4}|\mu_{0}(\alpha)}
=
\frac{\mu_{0}(\alpha)\|\Psi\|_{2}^{2}
+\langle\mathcal R_{\alpha}\Psi,\Psi\rangle}
{|\mathbb S^{4}|\mu_{0}(\alpha)}
+O_K(\|\Psi\|_{H^{2}}^{3}).
\end{equation}
Finally, substitute \eqref{eq:log-I-expansion} into \eqref{eq:deficit} and use
\eqref{eq:kappa-identities}. The quadratic term is
\(\frac12\langle\mathcal L_{\alpha}\Psi,\Psi\rangle\), and
\eqref{eq:deficit-expansion} follows.
\end{proof}

The spectral gap now gives a coercive estimate in the fixed gauge.

\begin{proposition}\label{prop:gauge-fixed}
There exists \(\rho_{\alpha}>0\) such that every \(\Psi\in H^{2}(\mathbb S^{4})\) satisfying
\begin{equation}\label{eq:gauge-smallness}
\Psi\perp_{L^{2}}\mathcal H_{0}\oplus\mathcal H_{1},
\qquad
\|\Psi\|_{H^{2}(\mathbb S^{4})}\leq\rho_{\alpha},
\end{equation}
obeys
\begin{equation}\label{eq:gauge-lower}
\mathcal D_{\alpha}(\Psi)
\geq\frac14\int_{\mathbb S^{4}}\Psi P_{4}^{\mathbb S^{4}}\Psi\,d\xi.
\end{equation}
There is also \(C=C(\alpha)>0\) such that
\begin{equation*}
\mathcal D_{\alpha}(\Psi)
\leq C\int_{\mathbb S^{4}}\Psi P_{4}^{\mathbb S^{4}}\Psi\,d\xi.
\end{equation*}
\end{proposition}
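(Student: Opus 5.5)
The plan is to combine the quadratic expansion of Lemma~\ref{lem:deficit-expansion} with the spectral coercivity \eqref{eq:coercivity-intro} of Theorem~\ref{thm:full-spectrum}. The cubic remainder will then be absorbed once $\rho_\alpha$ is small.

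First, I would record that on $\mathcal H_0^{\perp}$ the Paneitz seminorm controls the full $H^2$ norm. Indeed, for $\Psi\perp\mathcal H_0$ with spherical harmonic expansion $\Psi=\sum_{k\ge1}\Psi_k$, we have $\lambda_k=k(k+1)(k+2)(k+3)\ge 24$ and $\lambda_k\ge c(1+k(k+3))^2$ for $k\ge1$. Hence
\[
\|\Psi\|_{H^{2}(\mathbb S^{4})}^{2}\le C_0\int_{\mathbb S^{4}}\Psi P_{4}^{\mathbb S^{4}}\Psi\,d\xi,
\]
where the $H^2$ norm is taken in its spectral form $\sum_k(1+k(k+3))^2\|\Psi_k\|_2^2$, which is equivalent to the standard one. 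Both orthogonality conditions in \eqref{eq:gauge-smallness} hold, so Lemma~\ref{lem:deficit-expansion} applies with $K=\{\alpha\}$ as long as $\rho_\alpha\le\rho_K$. Combined with \eqref{eq:coercivity-intro}, it gives
\[
\mathcal D_\alpha(\Psi)\ge \tfrac12\langle\mathcal L_\alpha\Psi,\Psi\rangle-C_K\|\Psi\|_{H^2}^3
\ge \tfrac25\|[\Psi]\|_P^2 - C_K\|\Psi\|_{H^2}\,C_0\|[\Psi]\|_P^2 .
\]
I would then choose $\rho_\alpha\le\min\{\rho_K,\,(20C_KC_0)^{-1}\}$. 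With this choice the error term is at most $\tfrac{1}{20}\|[\Psi]\|_P^2$, and the bound becomes $\mathcal D_\alpha(\Psi)\ge(\tfrac25-\tfrac1{20})\|[\Psi]\|_P^2\ge\tfrac14\|[\Psi]\|_P^2$, which is \eqref{eq:gauge-lower}.

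For the upper bound I would use the same expansion from the other side. Every $\Lambda_k(\alpha)$ satisfies $\Lambda_k(\alpha)\le\lambda_k$, because $\kappa_\alpha(\mu_k+\mu_0)\ge0$. Therefore $\tfrac12\langle\mathcal L_\alpha\Psi,\Psi\rangle\le\tfrac12\|[\Psi]\|_P^2$. The cubic error is bounded by $C_KC_0\rho_\alpha\|[\Psi]\|_P^2$, which gives the upper bound with $C=\tfrac12+C_KC_0\rho_\alpha$.

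The only delicate point is making sure the cubic remainder is genuinely controlled by the Paneitz seminorm and not by the $L^2$ part. This is exactly where orthogonality to $\mathcal H_0$ is needed: it removes the kernel of $P_4^{\mathbb S^4}$, so the Paneitz seminorm becomes a genuine norm equivalent to $H^2$ on this subspace. No additional nonlinear estimate is required, since Lemma~\ref{lem:deficit-expansion} already supplies the uniform bound \eqref{eq:deficit-expansion}.
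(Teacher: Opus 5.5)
Your proposal is correct and follows the paper's argument. Like the paper, you combine the coercivity bound $\langle\mathcal L_\alpha\Psi,\Psi\rangle\geq\frac45\|[\Psi]\|_P^2$ from Theorem~\ref{thm:full-spectrum} with the expansion of Lemma~\ref{lem:deficit-expansion}, use the equivalence of the Paneitz energy with the $H^2$-norm on $\mathcal H_0^{\perp}$, and absorb the cubic remainder by shrinking $\rho_\alpha$.

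The only difference is cosmetic and concerns the upper bound. You use $\Lambda_k(\alpha)\leq\lambda_k$, which follows from $\kappa_\alpha(\mu_k+\mu_0)\geq0$. The paper instead appeals to the boundedness of $\mathcal R_\alpha$ on $L^2$ together with the Poincar\'e inequality. Both give the same conclusion.
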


\begin{proof}
By Theorem~\ref{thm:full-spectrum},
\begin{equation*}
\langle\mathcal L_{\alpha}\Psi,\Psi\rangle
\geq\frac45\int_{\mathbb S^{4}}\Psi P_{4}^{\mathbb S^{4}}\Psi\,d\xi.
\end{equation*}
On the subspace in \eqref{eq:gauge-smallness}, the Paneitz energy is
equivalent to the squared \(H^{2}\)-norm. Applying
Lemma~\ref{lem:deficit-expansion}, we obtain
\[
\mathcal D_{\alpha}(\Psi)
\geq\frac25\int_{\mathbb S^{4}}\Psi P_{4}^{\mathbb S^{4}}\Psi\,d\xi
-C\|\Psi\|_{H^{2}}^{3}.
\]
Decrease \(\rho_{\alpha}\) so that the last term is at most \(3/20\) of the Paneitz energy. This proves \eqref{eq:gauge-lower}. The upper estimate follows from the same expansion and the boundedness of \(\mathcal R_{\alpha}\) on \(L^{2}(\mathbb S^{4})\).
\end{proof}

\subsection{Modulation and the sharp local constant}

To pass from the fixed gauge to the full extremal manifold, we use a nearest-point modulation argument.

\begin{lemma}\label{lem:local-modulation}
There exists \(\varepsilon_{\alpha}>0\) with the following property. If
\(d_{P}(\Phi,\mathcal M_{\alpha})<\varepsilon_{\alpha}\), then the nearest
extremal is uniquely determined in \(\mathcal M_{\alpha}/\mathbb R\). Moreover,
there are an extremal \(\Theta\in\mathcal M_{\alpha}\), a conformal map
\(\tau\), a constant \(c\), and \(\Psi\in H^{2}(\mathbb S^{4})\) such that
\begin{equation}\label{eq:modulation-decomposition}
\Psi=T_{\tau}\Phi-c,
\qquad
\Psi\perp_{L^{2}}\mathcal H_{0}\oplus\mathcal H_{1},
\end{equation}
\begin{equation}\label{eq:distance-exact}
d_{P}(\Phi,\mathcal M_{\alpha})^{2}
=\int_{\mathbb S^{4}}\Psi P_{4}^{\mathbb S^{4}}\Psi\,d\xi,
\end{equation}
and
\begin{equation}\label{eq:deficit-exact-modulation}
\mathcal D_{\alpha}(\Phi)=\mathcal D_{\alpha}(\Psi).
\end{equation}
In addition,
\begin{equation}\label{eq:modulation-H2-control}
\|\Psi\|_{H^{2}(\mathbb S^{4})}
\leq C(\alpha)d_{P}(\Phi,\mathcal M_{\alpha}).
\end{equation}
The constants can be chosen uniformly for \(\alpha\) in a compact subset of
\((0,4)\).
\end{lemma}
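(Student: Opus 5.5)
The plan is to reduce everything to a neighbourhood of the trivial extremal by exploiting the conformal symmetry, and then run a finite-dimensional nearest-point argument in the parameter \(a\in B^{5}\).

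\textbf{Step 1: the manifold is an orbit.} First I record that \(\mathcal M_{\alpha}/\mathbb R\) is the orbit of \(0\) under the action \(T_{\tau}\). Indeed, \(T_{\tau}0=\frac{8-\alpha}{8}\log J_{\tau}\), and the conformal Jacobians are exactly the \(J_a\), \(a\in B^{5}\), up to rotations, which leave \(J_a\) in the same family. Next, \(T_{\tau}\Phi-T_{\tau}\Theta=(\Phi-\Theta)\circ\tau\). Hence \eqref{eq:Paneitz-invariance} gives
\[
d_P(T_\tau\Phi,T_\tau\mathcal M_\alpha)=d_P(\Phi,\mathcal M_\alpha),
\qquad T_\tau\mathcal M_\alpha=\mathcal M_\alpha.
\]
Together with \eqref{eq:deficit-invariance} and the invariance of \(\mathcal D_\alpha\) under constants, this shows: once a nearest point \(\Theta=c'+T_{\tau^{-1}}0\) exists, setting \(\Psi:=T_\tau\Phi-c\) gives \eqref{eq:deficit-exact-modulation} and \eqref{eq:distance-exact}. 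Moreover \(0\) is then a nearest point of \(\mathcal M_\alpha\) to \(\Psi\) modulo constants. The constant \(c\) is chosen so that \(\overline\Psi=0\), i.e.\ \(\Psi\perp\mathcal H_0\).

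\textbf{Step 2: orthogonality from the first-order condition.} Since \(0\) minimizes \(a\mapsto\|[\Psi-\tfrac{8-\alpha}{8}\log J_a]\|_P^2\) at \(a=0\), the derivative vanishes there. A direct computation gives
\[
\partial_{a}\log J_a\big|_{a=0}=8\xi,
\]
so the tangent space of \(\mathcal M_\alpha/\mathbb R\) at \(0\) is \(\mathcal H_1\). The Euler--Lagrange equation therefore reads \(\langle P_4^{\mathbb S^4}\Psi,\xi_j\rangle=24\langle\Psi,\xi_j\rangle=0\) for \(1\le j\le5\). This gives \(\Psi\perp_{L^2}\mathcal H_1\), hence \eqref{eq:modulation-decomposition}.

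The bound \eqref{eq:modulation-H2-control} then follows from the spectrum \eqref{eq:Paneitz-eigen}. On \((\mathcal H_0\oplus\mathcal H_1)^{\perp}\) the Paneitz eigenvalues are at least \(120\) and grow like \(k^4\), so \(\int\Psi P_4^{\mathbb S^4}\Psi\) is equivalent to \(\|\Psi\|_{H^2}^2\) with an absolute constant. This constant is independent of \(\alpha\), and so is the whole geometric construction except the factor \((8-\alpha)/8\), which is harmless on compact subsets of \((0,4)\).

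\textbf{Step 3: existence and uniqueness of the nearest point (main obstacle).} This is where I expect the real work, because the conformal group is noncompact.

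\emph{Existence.} Take a near-minimizing \(\Theta_0\in\mathcal M_\alpha\) with \(\|\Phi-\Theta_0\|_P<\varepsilon\), and apply Step 1 to move \(\Theta_0\) to \(0\). Then \(\widetilde\Phi:=T_{\tau_0}\Phi\) satisfies \(\|[\widetilde\Phi]\|_P<\varepsilon\). Now I study
\[
f(a):=\bigl\|[\widetilde\Phi-\tfrac{8-\alpha}{8}\log J_a]\bigr\|_P^2 .
\]
To show that minimizing parameters stay in a compact ball \(|a|\le r_0<1\), I would use Beckner's equality case (Lemma~\ref{lem:Beckner-Adams}) with \(W=\tfrac14\log J_a\) and \(\fint J_a=1\). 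This gives
\[
\|[\log J_a]\|_P^2=-128\pi^2\,\overline{\log J_a}\to\infty
\qquad\hbox{as }|a|\to1.
\]
Hence \(f(a)\ge(\|[\log J_a]\|_P\tfrac{8-\alpha}{8}-\varepsilon)^2>\varepsilon^2>f(0)\) near \(\partial B^5\), so a minimizer exists.

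\emph{Uniqueness.} First, \(f\) is smooth in \(a\). At \(\widetilde\Phi=0\) it has a strict, nondegenerate minimum at \(a=0\): its Hessian is \(24\cdot 64\cdot(\tfrac{8-\alpha}{8})^2\|\xi_j\|^2\) times the identity, and \(0\) is the only zero of \(f\) modulo constants. A compactness-plus-continuity argument then shows that for \(\varepsilon\) small every minimizer lies in a small ball \(|a|<r_1\). On that ball \(f\) is uniformly convex, by perturbation of the Hessian by \(O(\varepsilon)\), so the minimizer is unique. Equivalently, one can apply the implicit function theorem to \(\nabla_a f=0\).

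\emph{Uniformity in \(\alpha\).} All these constants depend continuously on \(\alpha\) through the factor \((8-\alpha)/8\). Therefore they are uniform for \(\alpha\) in a compact subset of \((0,4)\), which yields the final assertion.
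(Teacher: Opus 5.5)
Your proposal is correct and follows essentially the same route as the paper: you transport a near-minimizing extremal to the constant one by the conformal action, use Beckner's equality case to make the orbit proper, localize all minimizers near \(a=0\) by compactness, get uniqueness from the nondegenerate Hessian (the paper phrases this via the implicit function theorem for the normal equations \(G_j(a,f)=0\)), and read off \(\Psi\perp_{L^{2}}\mathcal H_{1}\) from the first variation. There are two cosmetic points: the Hessian at \(a=0\) is twice the Gram matrix, namely \(48(8-\alpha)^{2}\|\xi_j\|_{2}^{2}\delta_{jk}\) rather than the value you wrote, and the divergence \(\overline{\log J_a}\to-\infty\) needs the one-line justification that the mean of \(\log(1-2a\cdot\xi+|a|^{2})\) stays bounded while \(\log(1-|a|^{2})\to-\infty\).
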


\begin{proof}
We work first near the normalized extremal. Set
\[
m_{\alpha}(a):=\frac{8-\alpha}{8}\log J_{a},
\qquad
\widehat m_{\alpha}(a):=m_{\alpha}(a)-\overline{m_{\alpha}(a)}.
\]
The mean-zero space \(\mathcal H_{0}^{\perp}\cap H^{2}(\mathbb S^{4})\),
endowed with
\begin{equation*}
(f,g)_{P}:=\int_{\mathbb S^{4}}fP_{4}^{\mathbb S^{4}}g\,d\xi,
\end{equation*}
is a Hilbert space with norm equivalent to the \(H^{2}\)-norm. The map
\(a\mapsto\widehat m_{\alpha}(a)\) is smooth and
\begin{equation}\label{eq:extremal-derivative}
\partial_{a_{j}}\widehat m_{\alpha}(0)=(8-\alpha)\xi_{j},
\qquad 1\leq j\leq5.
\end{equation}
Thus its differential has rank five.

For mean-zero \(f\) and \(a\) near zero, define
\begin{equation}\label{eq:normal-equations-map}
G_{j}(a,f):=
\bigl(f-\widehat m_{\alpha}(a),
\partial_{a_j}\widehat m_{\alpha}(a)\bigr)_{P},
\qquad 1\leq j\leq5.
\end{equation}
At \((a,f)=(0,0)\),
\begin{equation}\label{eq:normal-equations-derivative}
\partial_{a_k}G_j(0,0)
=-(8-\alpha)^{2}
\int_{\mathbb S^{4}}\xi_kP_{4}^{\mathbb S^{4}}\xi_j\,d\xi
=-24(8-\alpha)^{2}\frac{|\mathbb S^{4}|}{5}\delta_{jk}.
\end{equation}
The implicit function theorem therefore gives a unique smooth parameter
\(a=a(f)\) satisfying \(G(a(f),f)=0\) for \(f\) small. The Hessian of
\(a\mapsto\|f-\widehat m_{\alpha}(a)\|_{P}^{2}\) equals
\(-2D_aG(a,f)\). At \((0,0)\) it is twice the positive Gram matrix of
\((8-\alpha)\xi_1,\ldots,(8-\alpha)\xi_5\); for small \(f\), the
remaining terms tend to zero in operator norm. The Hessian is therefore
positive definite after the neighbourhood is decreased. Hence \(\widehat m_{\alpha}(a(f))\) is the unique
local nearest point.

This nearest point is also the global nearest point when \(f\) is sufficiently
small. Indeed, the orbit is proper in the Paneitz norm. To see this, put
\(W_a=\frac14\log J_a\). Since \(\int_{\mathbb S^{4}}J_a\,d\xi=|\mathbb S^{4}|\),
the equality identity in Lemma~\ref{lem:Beckner-Adams} gives
\begin{equation}\label{eq:proper-extremal-orbit}
\int_{\mathbb S^{4}}W_aP_{4}^{\mathbb S^{4}}W_a\,d\xi
=-32\pi^{2}\overline{W_a}.
\end{equation}
The explicit formula for \(J_a\) gives
\[
W_a=\log(1-|a|^{2})-\log(1-2a\cdot\xi+|a|^{2}).
\]
We justify the boundedness of the mean of the second term. Write
\(a=r\theta\), where \(r=|a|\) and \(\theta\in\mathbb S^{4}\). For
\(r\geq1/2\),
\[
1-2a\cdot\xi+|a|^{2}
=(1-r)^{2}+2r(1-\theta\cdot\xi)
\asymp (1-r)^{2}+d_{\mathbb S^{4}}(\xi,\theta)^{2}
\]
near \(\theta\), with constants independent of \(r\) and \(\theta\). Hence
\[
\int_{0}^{1}\rho^{3}
\left|\log\bigl((1-r)^{2}+\rho^{2}\bigr)\right|\,d\rho\leq C.
\]
Away from \(\theta\), the logarithm is uniformly bounded. Therefore
\[
\int_{\mathbb S^{4}}
\left|\log(1-2a\cdot\xi+|a|^{2})\right|\,d\xi\leq C
\]
uniformly as \(r\uparrow1\). Since \(\log(1-r^{2})\to-\infty\), it follows
that \(\overline{W_a}\to-\infty\). Thus the right-hand side of
\eqref{eq:proper-extremal-orbit} tends to \(+\infty\). On compact subsets of
\(B^{5}\), the parametrization is injective modulo constants. Indeed, if
\(\widehat m_\alpha(a)=\widehat m_\alpha(b)\), then \(J_a\) and \(J_b\) differ
by a positive constant; their common normalization
\(\int_{\mathbb S^4}J_a=\int_{\mathbb S^4}J_b=|\mathbb S^4|\) forces this
constant to be one, and comparison of the affine denominators gives \(a=b\).
It follows by compactness that
\(\widehat m_{\alpha}(a_n)\to0\) in \(H^{2}\) only if \(a_n\to0\).
Consequently, for every sufficiently small parameter neighbourhood
\(U\) of zero, there is \(\eta>0\) such that
\[
\|\widehat m_\alpha(a)\|_P\geq4\eta
\qquad\hbox{for }a\in B^5\setminus U.
\]
If \(\|f\|_P<\eta\), then the distance from \(f\) to every orbit point
with parameter outside \(U\) is at least \(3\eta\), whereas its distance
to the normalized extremal is smaller than \(\eta\). Hence the local
critical point constructed above is the unique global nearest point. This
proves the asserted local uniqueness in the quotient by constants.

Now let \(\Phi\) satisfy the hypothesis of the lemma. If
\(d_P(\Phi,\mathcal M_\alpha)=0\), then \([\Phi]\) already belongs to
\(\mathcal M_\alpha/\mathbb R\), and all conclusions follow with
\(\Psi=0\). Assume henceforth that the distance is positive. Choose an
extremal \(\Theta_{0}\) whose distance to \(\Phi\) is smaller than
\(2d_P(\Phi,\mathcal M_\alpha)\). The conformal action is transitive on
\(\mathcal M_{\alpha}/\mathbb R\), so there is a conformal map \(\tau_0\) such
that \(T_{\tau_0}\Theta_0\) is constant. By \eqref{eq:Paneitz-invariance}, after
subtracting its mean, \(T_{\tau_0}\Phi\) lies in the preceding neighbourhood of
zero. The local nearest-point construction gives a unique nearest extremal.
Transporting it back yields \(\Theta\).

The first variation of the squared distance at \(\Theta\) is zero in every
tangent direction to \(\mathcal M_{\alpha}/\mathbb R\). Choose \(\tau\) so that
\(T_\tau\Theta\) is constant and subtract that constant from \(T_\tau\Phi\).
Because the affine Jacobian terms cancel in a difference,
\[
T_\tau\Phi-T_\tau\Theta=(\Phi-\Theta)\circ\tau.
\]
The tangent space at the normalized extremal is \(\mathcal H_1\), by
\eqref{eq:extremal-derivative}. Hence the transformed remainder \(\Psi\) is
Paneitz-orthogonal to \(\mathcal H_1\). Since
\(P_{4}^{\mathbb S^{4}}Y=24Y\) for \(Y\in\mathcal H_1\), this is equivalent to
\(L^{2}\)-orthogonality. Subtracting the mean gives the
\(\mathcal H_0\)-condition in \eqref{eq:modulation-decomposition}.

The extremal \(\Theta\) realizes the infimum. The conformal invariance
\eqref{eq:Paneitz-invariance} therefore gives the exact identity
\eqref{eq:distance-exact}; \eqref{eq:deficit-invariance} gives
\eqref{eq:deficit-exact-modulation}. Finally, the Paneitz Poincar\'e inequality
on \(\mathcal H_0^{\perp}\) gives \eqref{eq:modulation-H2-control}. Uniformity on
compact \(\alpha\)-intervals follows from the uniform invertibility of the
matrix in \eqref{eq:normal-equations-derivative} and the smooth dependence of
\(m_\alpha\) on \(\alpha\).
\end{proof}

\begin{proof}[Proof of Theorem~\ref{thm:Adams-Choquard-stability}]
Let \(d_{P}(\Phi,\mathcal M_{\alpha})<\varepsilon_{\alpha}\). Apply Lemma
\ref{lem:local-modulation}. By \eqref{eq:deficit-exact-modulation},
\eqref{eq:distance-exact} and Proposition~\ref{prop:gauge-fixed},
\begin{equation*}
\mathcal D_{\alpha}(\Phi)
=\mathcal D_{\alpha}(\Psi)
\geq c_{\alpha}
\int_{\mathbb S^{4}}\Psi P_{4}^{\mathbb S^{4}}\Psi\,d\xi
=c_{\alpha}d_{P}(\Phi,\mathcal M_{\alpha})^{2}.
\end{equation*}
This proves \eqref{eq:local-stability-intro}.

We now prove the sharp asymptotic constant. Put
\begin{equation*}
\lambda_{k}:=k(k+1)(k+2)(k+3).
\end{equation*}
For \(k\geq2\), the quotient of the quadratic part of the deficit on
\(\mathcal H_k\) by the Paneitz energy is
\begin{equation}\label{eq:mode-quotient}
\frac{\Lambda_{k}(\alpha)}{2\lambda_{k}}
=\frac12\left[1-
\frac{\kappa_{\alpha}(\mu_{k}(\alpha)+\mu_{0}(\alpha))}{\lambda_{k}}
\right].
\end{equation}
The positive sequence
\(\kappa_{\alpha}(\mu_k(\alpha)+\mu_0(\alpha))\) is strictly decreasing,
whereas \(\lambda_k\) is strictly increasing. Hence their quotient is
strictly decreasing. It follows from \eqref{eq:mode-quotient} that the mode
quotient is strictly increasing for \(k\geq2\), and its minimum is attained
at \(k=2\).

To prove the lower limit, let \(\Phi_n\) be any sequence such that
\begin{equation*}
d_n:=d_P(\Phi_n,\mathcal M_{\alpha})\longrightarrow0,
\qquad d_n>0.
\end{equation*}
Let \(\Psi_n\) be the modulated remainders supplied by Lemma
\ref{lem:local-modulation}, and set \(Z_n:=\Psi_n/d_n\). Then
\begin{equation}\label{eq:normalized-remainders}
Z_n\perp_{L^{2}}\mathcal H_0\oplus\mathcal H_1,
\qquad
\int_{\mathbb S^{4}}Z_nP_{4}^{\mathbb S^{4}}Z_n\,d\xi=1,
\qquad
\|Z_n\|_{H^{2}}\leq C(\alpha).
\end{equation}
Lemma~\ref{lem:deficit-expansion}, \eqref{eq:distance-exact} and
\eqref{eq:modulation-H2-control} give
\begin{equation}\label{eq:normalized-deficit-limit}
\frac{\mathcal D_{\alpha}(\Phi_n)}{d_n^{2}}
=\frac12\langle\mathcal L_{\alpha}Z_n,Z_n\rangle+O_{\alpha}(d_n).
\end{equation}
Write \(Z_n=\sum_{k\geq2}Z_{n,k}\), with
\(Z_{n,k}\in\mathcal H_k\). By \eqref{eq:normalized-remainders},
\[
\sum_{k\geq2}\lambda_k\|Z_{n,k}\|_{2}^{2}=1.
\]
Therefore
\begin{equation}\label{eq:mode-lower-bound-sequence}
\frac12\langle\mathcal L_{\alpha}Z_n,Z_n\rangle
=\sum_{k\geq2}\frac{\Lambda_k(\alpha)}{2\lambda_k}
\lambda_k\|Z_{n,k}\|_2^{2}
\geq\frac{\Lambda_2(\alpha)}{240}.
\end{equation}
Equations \eqref{eq:normalized-deficit-limit} and
\eqref{eq:mode-lower-bound-sequence} prove the required liminf inequality.

For the reverse inequality, choose \(Y\in\mathcal H_2\) with
\(\int YP_{4}^{\mathbb S^{4}}Y=1\) and put \(\Phi_t=tY\). Differentiate the nearest-point equations
\eqref{eq:normal-equations-map} at \(t=0\). Their derivative in the
\(f\)-variable applied to \(Y\) is
\((Y,(8-\alpha)\xi_j)_P=0\) for every \(j\), because
\(Y\perp_P\mathcal H_1\). Since the matrix
\eqref{eq:normal-equations-derivative} is invertible, \(a'(0)=0\). The smooth implicit-function parameter therefore satisfies
\(a(t)=O(t^{2})\). Since
\[
\widehat m_{\alpha}(a)
=(8-\alpha)a\cdot\xi+O(|a|^{2})
\quad\hbox{in }H^{2}(\mathbb S^{4}),
\]
and \(Y\perp_{P}\mathcal H_{1}\), the cross term between \(tY\) and the
linear part of \(\widehat m_{\alpha}(a(t))\) vanishes. Therefore
\begin{equation*}
d_P(\Phi_t,\mathcal M_\alpha)^{2}=t^{2}+O(t^{4}).
\end{equation*}
whereas Lemma~\ref{lem:deficit-expansion} gives
\begin{equation*}
\mathcal D_\alpha(\Phi_t)
=\frac12\Lambda_2(\alpha)t^{2}\|Y\|_2^{2}+O(t^{3})
=\frac{\Lambda_2(\alpha)}{240}t^{2}+O(t^{3}).
\end{equation*}
This proves
\begin{equation*}
\lim_{\varepsilon\downarrow0}
\inf_{0<d_{P}(\Phi,\mathcal M_{\alpha})<\varepsilon}
\frac{\mathcal D_{\alpha}(\Phi)}
{d_{P}(\Phi,\mathcal M_{\alpha})^{2}}
=\frac{\Lambda_{2}(\alpha)}{240}.
\end{equation*}
Equivalently, for every \(\delta>0\) there exists
\(\varepsilon_{\alpha,\delta}>0\) such that
\begin{equation*}
\mathcal D_{\alpha}(\Phi)
\geq(\gamma_{\alpha}-\delta)
 d_P(\Phi,\mathcal M_{\alpha})^{2}
\end{equation*}
whenever \(d_P(\Phi,\mathcal M_\alpha)<\varepsilon_{\alpha,\delta}\).

By \eqref{eq:mu-monotone},
\begin{equation*}
\frac{\mu_{2}(\alpha)}{\mu_{0}(\alpha)}
=\frac{\alpha(\alpha+2)}{(8-\alpha)(10-\alpha)}.
\end{equation*}
Using \eqref{eq:kappa-identities},
\begin{equation*}
\Lambda_{2}(\alpha)
=120-\frac{6(\alpha^{2}-8\alpha+40)}{10-\alpha}.
\end{equation*}
Division by \(240\) gives the formula for \(\gamma_{\alpha}\) in
\eqref{eq:sharp-local-constant}. Finally,
\begin{equation*}
\gamma_{\alpha}-\frac25
=\frac{\alpha(4-\alpha)}{40(10-\alpha)}>0.
\end{equation*}
Uniformity for \(\alpha\) in compact subsets of \((0,4)\) follows from Lemmas
\ref{lem:deficit-expansion} and \ref{lem:local-modulation}, together with the
uniform spectral gap in Theorem~\ref{thm:full-spectrum}.
\end{proof}

\subsection{Euclidean formulation}

Define the conformal energy space
\begin{equation}\label{eq:Euclidean-X}
\mathcal X
:=\left\{\phi=S^{*}\Phi:\Phi\in H^{2}(\mathbb S^{4})\right\}.
\end{equation}
Equivalently, \(\phi\in\mathcal X\) if and only if
\(S_{*}\phi\in H^{2}(\mathbb S^{4})\). For \(\phi\in\mathcal X\), put
\begin{equation*}
\mathcal J_{\alpha}(\phi)
:=\int_{\mathbb R^{4}}\int_{\mathbb R^{4}}
\frac{e^{U(x)+\phi(x)}e^{U(y)+\phi(y)}}
{|x-y|^{\alpha}}\,dxdy.
\end{equation*}

The sphere-to-Euclidean passage on the form domain is summarized next.

\begin{lemma}\label{lem:energy-space-transfer}
The map \(S^{*}:H^{2}(\mathbb S^{4})\to\mathcal X\) is one-to-one and onto.
If \(\phi=S^{*}\Phi\), then
\begin{equation}\label{eq:energy-space-transfer}
\int_{\mathbb S^{4}}|\Phi|^{2}\,d\xi
=\int_{\mathbb R^{4}}\phi(x)^{2}\omega(x)^{4}\,dx,
\qquad
\int_{\mathbb S^{4}}\Phi P_{4}^{\mathbb S^{4}}\Phi\,d\xi
=\int_{\mathbb R^{4}}|\Delta\phi|^{2}\,dx.
\end{equation}
In particular, every \(\phi\in\mathcal X\) belongs to
\(H^{2}_{\operatorname{loc}}(\mathbb R^{4})\), satisfies
\(\Delta\phi\in L^{2}(\mathbb R^{4})\), and has finite weighted
\(L^{2}\)-norm.
\end{lemma}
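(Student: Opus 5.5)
Bijectivity of \(S^{*}\) onto \(\mathcal X\) is immediate from the definition \eqref{eq:Euclidean-X}: \(S\) is a diffeomorphism of \(\mathbb R^{4}\) onto \(\mathbb S^{4}\setminus\{p\}\), and \(\{p\}\) has measure zero, so \(S^{*}\Phi=0\) a.e. forces \(\Phi=0\) a.e. Surjectivity is tautological. The first identity in \eqref{eq:energy-space-transfer} is the change of variables \(d\xi=\omega^{4}dx\) from \eqref{eq:stereo-identities}, i.e. \eqref{eq:L2-isometry}. Finiteness of the weighted \(L^{2}\)-norm then follows because \(\omega^{4}=16(1+|x|^{2})^{-4}\).

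The substantive part is the energy identity, and I would prove it by density. For \(\Phi\in C^{\infty}(\mathbb S^{4})\), the function \(\phi=S^{*}\Phi\) is smooth and bounded, and \eqref{eq:Paneitz-covariance} gives \(\Delta^{2}\phi=\omega^{4}S^{*}(P_{4}^{\mathbb S^{4}}\Phi)\). I would integrate by parts on \(B_{R}\):
\[
\int_{B_{R}}|\Delta\phi|^{2}\,dx=\int_{B_{R}}\phi\,\Delta^{2}\phi\,dx+\text{boundary terms}.
\]
The interior term converges to \(\int_{\mathbb S^{4}}\Phi P_{4}^{\mathbb S^{4}}\Phi\,d\xi\) by the change of variables. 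For the boundary terms, smoothness of \(\Phi\) at \(p\) and the chain rule in the inverted variable \(y=x/|x|^{2}\) give
\[
|\nabla^{j}\phi(x)|\le C|x|^{-j-1}\quad\text{for } j\ge1.
\]
In fact \(\phi(x)=\Phi(p)+O(|x|^{-1})\), each derivative gains a further factor \(|x|^{-1}\), and \(\nabla\phi=O(|x|^{-2})\). The boundary integrals are of the form \(\int_{\partial B_{R}}(\phi\,\partial_{\nu}\Delta\phi-\partial_{\nu}\phi\,\Delta\phi)\), with integrands \(O(R^{-4})\) and \(O(R^{-5})\) against area \(R^{3}\), so they tend to zero. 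The bounded term \(\phi\to\Phi(p)\) is harmless since \(\partial_{\nu}\Delta\phi=O(R^{-4})\). This also gives \(\Delta\phi\in L^{2}\) in the smooth case.

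For general \(\Phi\in H^{2}(\mathbb S^{4})\), I would take smooth \(\Phi_{n}\to\Phi\) in \(H^{2}\). By the smooth identity applied to differences, \(\Delta(S^{*}\Phi_{n})\) is Cauchy in \(L^{2}(\mathbb R^{4})\), while \(S^{*}\Phi_{n}\to\phi\) in \(L^{2}_{w}\), hence in \(L^{2}_{\mathrm{loc}}\). Distributionally the limit of \(\Delta(S^{*}\Phi_{n})\) equals \(\Delta\phi\), so \(\Delta\phi\in L^{2}\) and the identity passes to the limit. Local \(H^{2}\)-regularity follows because on any ball \(B_{R}\) the map \(S\) is a smooth diffeomorphism onto a compact subset of \(\mathbb S^{4}\setminus\{p\}\) with bounded derivatives, and composition preserves \(H^{2}\) there. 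I expect the only real obstacle to be the decay of the boundary terms at infinity, i.e. checking the derivative bounds through the inversion near the pole \(p\). That step is elementary.
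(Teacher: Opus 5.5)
Your proposal is correct and follows the paper's route: the $L^{2}$ identity by the change of variables, the energy identity for smooth $\Phi$ from the Paneitz covariance, and then $H^{2}$-density with $\Delta(S^{*}\Phi_{n})$ Cauchy in $L^{2}(\mathbb R^{4})$ and $S^{*}\Phi_{n}$ converging in the weighted norm. Your inversion estimate $|\nabla^{j}\phi(x)|\le C|x|^{-j-1}$ kills the boundary terms at infinity, and it supplies the justification that the paper leaves implicit when it simply cites the conformal energy identity in the smooth case.
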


\begin{proof}
The first identity is the change of variables in
\eqref{eq:stereo-identities}. For smooth \(\Phi\), the second identity is the
conformal energy identity used in the proof of Proposition
\ref{prop:conformal-linearized}. Let now \(\Phi\in H^{2}(\mathbb S^{4})\),
and choose \(\Phi_n\in C^{\infty}(\mathbb S^{4})\) converging to \(\Phi\) in
\(H^{2}\). The spectral description \eqref{eq:Paneitz-eigen} shows that
\[
\|\Phi\|_{L^{2}}^{2}
+\int_{\mathbb S^{4}}\Phi P_{4}^{\mathbb S^{4}}\Phi\,d\xi
\]
is an equivalent squared norm on \(H^{2}(\mathbb S^{4})\). Consequently,
\(S^{*}\Phi_n\) is Cauchy both in the weighted \(L^{2}\)-norm and in the
\(L^{2}\)-norm of its Laplacian. Its limit is \(S^{*}\Phi\) locally, and
passing to the limit proves \eqref{eq:energy-space-transfer}. Local Sobolev
regularity follows from the smoothness of the stereographic coordinate map
away from the pole. Injectivity and surjectivity follow directly from the
definition of \(\mathcal X\).
\end{proof}

\begin{remark}\label{rem:Euclidean-space-necessity}
The pullback condition in \eqref{eq:Euclidean-X} is essential. The three
requirements \(\phi\in H^{2}_{\operatorname{loc}}\),
\(\Delta\phi\in L^{2}\), and
\(\int\phi^{2}\omega^{4}<\infty\) do not by themselves imply that
\(S_{*}\phi\in H^{2}(\mathbb S^{4})\). For example, a nonconstant affine
harmonic function satisfies those three requirements but has a nonremovable
singularity at the stereographic pole. The space \(\mathcal X\) excludes such
harmonic remainders and is exactly the Euclidean realization of the spherical
Paneitz energy space.
\end{remark}

The spherical inequality and local stability theorem therefore have the following Euclidean form.

\begin{corollary}\label{cor:Euclidean-stability}
For every \(\phi\in\mathcal X\),
\begin{equation}\label{eq:Euclidean-Adams-Choquard}
\begin{aligned}
\log\frac{\mathcal J_{\alpha}(\phi)}{8\pi^{2}(8-\alpha)}
\leq{}&
\frac{3}{4\pi^{2}}\int_{\mathbb R^{4}}\phi(x)\omega(x)^{4}\,dx+\frac{1}{8\pi^{2}(8-\alpha)}
\int_{\mathbb R^{4}}|\Delta\phi|^{2}\,dx.
\end{aligned}
\end{equation}
Equality holds if and only if
\begin{equation}\label{eq:Euclidean-equality}
\phi(x)=c+U_{\mu,\zeta}(x)-U(x)
\end{equation}
for some \(c\in\mathbb R\), \(\mu>0\), and \(\zeta\in\mathbb R^{4}\).

Let \(\mathcal D_{\alpha}^{\mathbb R^{4}}(\phi)\) denote the right-hand side of \eqref{eq:Euclidean-Adams-Choquard} minus its left-hand side, multiplied by \(4\pi^{2}(8-\alpha)\). There exist \(\varepsilon_{\alpha}>0\) and \(c_{\alpha}>0\) such that
\begin{equation}\label{eq:Euclidean-stability}
\mathcal D_{\alpha}^{\mathbb R^{4}}(\phi)
\geq c_{\alpha}
\inf_{\substack{\mu>0\\ \zeta\in\mathbb R^{4}}}
\int_{\mathbb R^{4}}
\left|\Delta\bigl(\phi-U_{\mu,\zeta}+U\bigr)\right|^{2}\,dx
\end{equation}
whenever the infimum on the right-hand side is smaller than \(\varepsilon_{\alpha}^{2}\). The sharp asymptotic constant relative to this squared distance is \(\gamma_{\alpha}\) from \eqref{eq:sharp-local-constant}.
\end{corollary}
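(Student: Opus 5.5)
The plan is to transport everything to the sphere through the bijection \(S^{*}:H^{2}(\mathbb S^{4})\to\mathcal X\) of Lemma~\ref{lem:energy-space-transfer}. We then read off the statement from Corollary~\ref{cor:sharp-Adams-Choquard} and Theorem~\ref{thm:Adams-Choquard-stability}. For \(\phi\in\mathcal X\), write \(\phi=S^{*}\Phi\) and set \(\widetilde\Phi:=\Phi+S_{*}U-\frac{8-\alpha}{2}\log\frac{C_{\alpha}}{2}\).

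By \eqref{eq:eU-omega}, \(e^{U}=\kappa_{\alpha}^{1/2}\omega^{(8-\alpha)/2}\). The identities \eqref{eq:stereo-identities} then give
\[
\mathcal J_{\alpha}(\phi)=\kappa_{\alpha}\,\mathcal I_{\alpha}(\Phi),
\]
because the weights \(\omega^{(8-\alpha)/2}\omega^{-4}\) from the densities combine with \(\omega^{\alpha/4}\omega^{\alpha/4}\) from the kernel to exactly \(1\), by \eqref{eq:HLS-weight-cancellation}. Hence it is simplest to keep \(\Phi\) itself rather than \(\widetilde\Phi\). By \eqref{eq:kappa-identities},
\[
\kappa_{\alpha}|\mathbb S^{4}|\mu_{0}(\alpha)=3(8-\alpha)\cdot\frac{8\pi^{2}}{3}=8\pi^{2}(8-\alpha),
\]
so
\[
\log\frac{\mathcal J_{\alpha}(\phi)}{8\pi^{2}(8-\alpha)}=\log\frac{\mathcal I_{\alpha}(\Phi)}{|\mathbb S^{4}|\mu_{0}(\alpha)}.
\]
Lemma~\ref{lem:energy-space-transfer} gives \(\int|\Delta\phi|^{2}=\int\Phi P_{4}^{\mathbb S^{4}}\Phi\). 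It also gives \(2\overline\Phi=\frac{2}{|\mathbb S^{4}|}\int\phi\,\omega^{4}=\frac{3}{4\pi^{2}}\int\phi\,\omega^{4}\). Thus \eqref{eq:Euclidean-Adams-Choquard} is literally \eqref{eq:sharp-Adams-Choquard} for \(\Phi\), and \(\mathcal D_{\alpha}^{\mathbb R^{4}}(\phi)=\mathcal D_{\alpha}(\Phi)\) after multiplying by \(4\pi^{2}(8-\alpha)\), as comparison with \eqref{eq:deficit} shows.

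For the equality cases, I must identify \(S^{*}\mathcal M_{\alpha}\) with \(\{c+U_{\mu,\zeta}-U\}\). The key computation is that \(e^{q_{\alpha}U_{\mu,\zeta}}\,dx\) is a constant multiple of the pullback of \(J_{a}\,d\xi\). More precisely, \((\mu/(1+\mu^{2}|x-\zeta|^{2}))^{4}\) is the Jacobian of the Möbius map \(S\circ A_{\mu,\zeta}\circ S^{-1}\) times \(\omega^{4}\), up to a constant, where \(A_{\mu,\zeta}x=\mu(x-\zeta)\). This gives
\[
U_{\mu,\zeta}-U=\frac{8-\alpha}{8}\log\bigl(S^{*}J_{a}\bigr)+\mathrm{const}
\]
for a suitable \(a\in B^{5}\). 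The correspondence \((\mu,\zeta)\leftrightarrow a\) is onto \(B^{5}\), because the conformal group modulo rotations fixing \(S^{-1}(0)\) acts transitively. This step is routine, but it is where I would be careful with constants; rotations contribute nothing since \(J\) is unchanged.

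For the stability statement, the infimum over \((\mu,\zeta)\) of \(\int|\Delta(\phi-U_{\mu,\zeta}+U)|^{2}\) equals \(d_{P}(\Phi,\mathcal M_{\alpha})^{2}\). This follows from Lemma~\ref{lem:energy-space-transfer} applied to \(\Phi-\Psi\), using the facts that constants are annihilated by \(\Delta\) and \(P_{4}^{\mathbb S^{4}}\) and that the preceding step identifies the two families. One point must be checked here: \(U_{\mu,\zeta}-U\in\mathcal X\). This holds because \(\log J_{a}\) is smooth on the sphere for \(|a|<1\).

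With these identifications, \eqref{eq:Euclidean-stability} and the sharp constant \(\gamma_{\alpha}\) follow directly from \eqref{eq:local-stability-intro} and \eqref{eq:sharp-local-constant}. I expect the main obstacle to be the precise matching of \(\{U_{\mu,\zeta}-U\}\) with \(S^{*}\{\frac{8-\alpha}{8}\log J_{a}\}\), including surjectivity onto all of \(B^{5}\). Everything else is bookkeeping of the constants \(\kappa_{\alpha}\), \(\mu_{0}(\alpha)\) and \(|\mathbb S^{4}|\).
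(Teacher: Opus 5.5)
Your proposal is correct and follows the paper's own proof. You pull back by \(S^{*}\), use \(\mathcal J_{\alpha}(\phi)=\kappa_{\alpha}\mathcal I_{\alpha}(\Phi)\) with \(\kappa_{\alpha}|\mathbb S^{4}|\mu_{0}(\alpha)=8\pi^{2}(8-\alpha)\), and invoke Lemma~\ref{lem:energy-space-transfer} to make the Euclidean inequality and deficit literally the spherical ones; the equality set, distance and \(\gamma_{\alpha}\) then come from Corollary~\ref{cor:sharp-Adams-Choquard} and Theorem~\ref{thm:Adams-Choquard-stability}. The identification of \(S^{*}\mathcal M_{\alpha}\) with \(\{c+U_{\mu,\zeta}-U\}\), which the paper only asserts, is exact with no additive constant: with \(a=(a',a_{5})\) and \(A=|a'|^{2}+(1+a_{5})^{2}\), one has \(J_{a}\circ S=\bigl(\mu(1+|x|^{2})/(1+\mu^{2}|x-\zeta|^{2})\bigr)^{4}\) for \(\mu=A/(1-|a|^{2})\) and \(\zeta=2a'/A\), a bijection of \(B^{5}\) onto \((0,\infty)\times\mathbb R^{4}\) (and, as a minor bookkeeping point, the kernel contributes \(\omega(x)^{\alpha/2}\omega(y)^{\alpha/2}\), not \(\omega^{\alpha/4}\omega^{\alpha/4}\)).
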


\begin{proof}
Let \(\Phi=S_{*}\phi\). By Lemma~\ref{lem:energy-space-transfer},
\(\Phi\in H^{2}(\mathbb S^{4})\) and
\begin{equation*}
\int_{\mathbb S^{4}}\Phi P_{4}^{\mathbb S^{4}}\Phi\,d\xi
=\int_{\mathbb R^{4}}|\Delta\phi|^{2}\,dx.
\end{equation*}
Moreover,
\begin{equation*}
\int_{\mathbb S^{4}}\Phi\,d\xi
=\int_{\mathbb R^{4}}\phi(x)\omega(x)^{4}\,dx.
\end{equation*}
The spherical Adams inequality implies \(e^{\Phi}\in L^{q_{\alpha}}(\mathbb S^{4})\), so the Hardy--Littlewood--Sobolev inequality shows that all nonlocal integrals below are finite. Using \eqref{eq:stereo-identities} and the formula for \(e^{U}\) in the proof of Proposition~\ref{prop:conformal-linearized},
\begin{equation*}
\mathcal J_{\alpha}(\phi)
=\kappa_{\alpha}\mathcal I_{\alpha}(\Phi).
\end{equation*}
By \eqref{eq:kappa-identities} and \eqref{eq:sphere-volume},
\begin{equation*}
\mathcal J_{\alpha}(0)
=\kappa_{\alpha}|\mathbb S^{4}|\mu_{0}(\alpha)
=8\pi^{2}(8-\alpha).
\end{equation*}
Thus \eqref{eq:Euclidean-Adams-Choquard} is exactly \eqref{eq:sharp-Adams-Choquard}. The equality set is the stereographic image of \(\mathcal M_{\alpha}\), which is \eqref{eq:Euclidean-equality}. Finally, the Paneitz distance becomes the homogeneous biharmonic distance in \eqref{eq:Euclidean-stability}. Theorem~\ref{thm:Adams-Choquard-stability} proves the remaining assertions.
\end{proof}

\noindent{\bf Acknowledgements}

The authors were supported by the National Natural Science Foundation of China (No. 12371121) and the Fundamental Research Funds for the Central Universities (No. SWU-KF26002).

\noindent{\bf Data availability statement}

Data availability is not applicable to this article as no datasets were generated or analysed during the current study.

\noindent{\bf Conflict of interest}

The authors declare that there is no conflict of interest.

\end{document}